\renewcommand{\b}[1]{\ensuremath{\mathbf{#1}}} 
\newcommand{\gb}[1]{\ensuremath{\boldsymbol{#1}}} 
\DeclareMathOperator*{\essinf}{ess\,inf}
\theoremstyle{plain}
\newtheorem{theorem}{Theorem}
\newtheorem{defin}{Definition}
\newtheorem{prop}{Proposition}
\newtheorem{lem}{Lemma}
\newtheorem{cor}{Corollary}
\theoremstyle{remark} 
\newtheorem{remark}{Remark}
\DeclareMathOperator{\tr}{tr}
\DeclareMathOperator{\rank}{rank}
\DeclareMathOperator{\ran}{ran}
\begin{document} 

	\title{Stability in the linearized problem \\ of quantitative elastography}
	\author{Thomas Widlak${}^1$, Otmar Scherzer${}^{1,2}$ 	\\
	\small
	${}^1$ Computational Science Center, University of Vienna, \\ 
	\small Oskar-Morgenstern-Platz~1, A-1090 Vienna, Austria\\ 
	\small
	${}^2$ RICAM, Austrian Academy of Sciences, \\
	\small Altenberger Str.~69, A-4040 Linz, Austria}
	
	\date{May 20th, 2014}

	\pagestyle{empty}
	\maketitle
	\pagestyle{fancy}

	\fancyhf{}
	\renewcommand{\headrulewidth}{0pt}
	
	\fancyhead[LE,RO]{\thepage}
	\fancyhead[CE]{\uppercase{T. Widlak and O. Scherzer}}
	\fancyhead[CO]{\uppercase{Stability in linearized elastography}}

\begin{abstract}
The goal of quantitative elastography is to identify biomechanical parameters from interior displacement data, which are provided by other modalities, such as ultrasound or magnetic resonance imaging. In this paper, we analyze the stability of several linearized problems in quantitative elastography. Our method is based on the theory of redundant systems of linear partial differential equations. We analyze the ellipticity properties of the corresponding PDE systems augmented with the interior displacement data; we explicitly characterize the kernel of the forward operators and show injectivity for particular linearizations. Stability criteria can then be deduced. Our results show stability of shear modulus, pressure and density; they indicate that singular strain fields should be avoided, and show how additional measurements can help in ensuring stability.
\end{abstract}

\section{Introduction}

Elastography is a medical imaging technology; its current applications range from detection of cancer in the breast and in the prostate, liver cirrhosis and characterization of artherosclerotic plaque in hardened coronary vessels \cite{Doy12,ParDoyRub11,WojFarWebThoFis10, AigPalSchoLebJun11, WanChanHunEngTun09,WooRomLerPanBro06,BisPatParCicRic10}. 

Elastography is based on the fact that tissue has high contrast in biomechanical quantities and the health state of organs is reflected in the elastic properties of tissue \cite{KroWheKalGarHal98,AglSko00}. The most important of these is the shear modulus $\mu$, which is the  dominant factor in the propagation of shear waves in tissue; shear wave speed in tissue can change up to a factor of 4 with disease \cite{SarSkoEmeFowPip95}.

Elastography is performed by coupling with various established imaging techniques, such as ultrasound \cite{LerParHolGraWaa88,OphCesPonYazLi91}, MRI \cite{MutLomRosGreMan95,ManOliDreMahKru01} or OCT \cite{SunStaYan11,NahBauRouBoc13} -- What is common to these elastography techniques is that they provide \emph{interior data} of the displacement $\b u|_\Omega$ of the tissue on the imaging domain $\Omega$. According to the specific excitation used, $\b u$ can be space or space-time-dependent (both cases are treated in this work).

In some applications, knowledge of the displacement $\b u$ already gives qualitative diagnostic information (see, e.g., \cite{ZhaBroMosVinMar09} for a dermatological application). More accurate information is provided by quantitative information of the underlying biomechanical parameters. For this one needs to formulate the elasticity problem as a model; for elastography, various models based on linear elasticity, viscoelasticity or hyperelasticity have been considered \cite{Doy12}.

To recover the material parameters, an inverse problem based on an elasticity model has to be solved. Given the displacement $\b u$, the mathematical problems in \emph{quantitative elastography} are the recovery of parameters such as the Lam\'e parameter $\lambda$, the shear modulus $\mu$, the density $\rho$, or recovery of the shear wave speed $\sqrt{\frac{\mu}{\rho}}$ \cite{Doy12,BarBam02}. These problems are non-linear inverse problems.

Much effort in elastography research has been concerned with developing adequate numerical inversion schemes for all kind of experimental varieties of elastography (see, e.g., \cite{McLRen06a,McLRen06,McLRenParWu07,AmmGarJou10,GokBarObe08} and the reviews in \cite{Doy12,SetGoeDhoBruSlo11}); among the proposed algorithms, optimization procedures, which need the linearized inverse problem, form an important class. 

Mathematical results about uniqueness and stability of the inverse problems in elastography have been gathered recently, mostly for the simplest models in linear elasticity. In \cite{JiMcl04,McLYoo04}, it was proved, that -- given one piece of dynamic interior information $\b u$ subject to restrictions such as $\b u\neq 0$ --, one can uniquely recover material parameters. Other uniqueness results have been reported in \cite{BarGok04} for two and more measurements, and in \cite{RodObeBar12} for a hyperelastic model. The recent paper \cite{BalBelImpMon13}, showed unique reconstruction of $\lambda$ and $\mu$ given two sets of exact measurements subject to some non-trivial conditions on the displacements. The stability of the nonlinear problem has been studied in \cite{BalBelImpMon13} using ODE-based and variational tools.

Elastography can be seen as part of \emph{coupled-physics} imaging methods \cite{ArrSch12}. This body of literature centers on novel imaging methods involving more than one physical modality; emphasis is laid upon the quantitative imaging problems for mechanical, optical or electrical parameters that have high diagnostic contrast. For reviews on the typical problems in coupled-physics imaging, see \cite{Amm08,Bal12,Kuc12,WidSch12,ArrSch12,KucKun08}.

In many coupled-physics problems, high resolution in the reconstructions was observed. To explain this phenomenon in a unified manner, a general strategy is to linearize the corresponding nonlinear quantitative problems. Then the stability properties of the linearized problem have been treated with tools of linear PDE theory \cite{KucSte12,Bal12b,BalMos13}. In \cite{KucSte12}, problems in conductivity imaging and quantitative photoacoustic imaging were treated. The linearized forward operators were studied with pseudodifferential theory and shown to be Fredholm, i.e., with stable inversion up to a finite-dimensional kernel.  In \cite{Bal12b}, a general framework was proposed for treating linearizations of quantitative problems with interior data; in this method, the parameters are considered as variables and the information is recast into a single redundant PDE system, to which  theory such as \cite{Sol73} is applied. This method was applied to the power density problem in \cite{Bal12b} and to the problem of 
acousto-optic imaging in  \cite{BalMos13}.

In this article, we treat the linearized problem in quantitative elastography using the general coupled-physics approach in \cite{Bal12b}, using \cite{Sol73}. This is the first time that this technique is applied to a problem in elasticity imaging. We treat stability and explicitly characterize the finite-dimensional kernel, and show injectivity in several versions of the linearized inverse problems.

The structure of the paper is as follows: in Section~\ref{sec:Model}, we review the elasticity model which we are using, and in Section~\ref{sec:SolTheo}, we review necessary background of linear PDE theory. In Section~\ref{sec:Stab}, we apply this theory to the elasticity equation. We investigate how several kinds of linearizations perform analytically, derive the stability results and the characterization of the kernel. In Section~\ref{sec:Disc}, we discuss the stability conditions with respect to the literature. The appendix contains topological lemmas needed for the investigation of the kernel.

\section{Modelling quantitative elastography}
\label{sec:Model}
\subsection{Experiments and interior information in elastography}
\label{subsec:Exp}
The general principle in elastography \cite{Doy12} is to
\begin{itemize}
	\item perturb the tissue using a suitable mechanical source
	\item determine the internal tissue displacement using an ultrasound, magnetic resonance or optical displacement estimation method 
	\item infer the mechanical properties from the interior information, using a mechanical model
\end{itemize}

Note that in elastography, there are two forms of excitation: an elastic deformation from the mechanical source, and the excitation from the ground modality. Also, the reconstruction procedure involves two steps: the recovery of the mechanical displacement $\b u(\b x)|_\Omega$ resp. $\b u(\b x,t)|_\Omega$ from measurements on the boundary, and the recovery of the mechanical parameters and properties from $\b u|_\Omega$. In this last step of quantitative reconstruction (which is treated in this article), the mechanical displacement $\b u|_\Omega$ is also referred to as \emph{interior information}.

\subsection{A linear elasticity model for inhomogeneous linear isotropic media}
There exist different variants of elastography using quasi-static, transient or time-harmonic mechanical excitations, but they can be described by common PDE models \cite{ParDoyRub11,Doy12}.

The elasticity models can be deduced from the equation of motion \cite{Lov44}
\begin{equation}
	\label{eq:EqMot}
	\nabla\cdot\sigma-\rho\b u_{tt}=\b F
\end{equation}
where $\b F(\b x,t)$ is the excitation force density in $\newton\per\meter\cubed$. (We use the convention that letters printed in bold denote \emph{vectors} in $\mathbb{R}^3$.) The mechanical displacement $\b u(\b x,t)$ of the material point which is at position $\b x$ at time $t$ is measured in $[\b u]=\meter$. $\rho(\b x)$ is the density in $\kilogram\per\meter\cubed$. $\sigma=(\sigma_{ij}(\b x,t))_{ij}$ is the mechanical (or Cauchy) stress tensor with unit $\newton\per\meter\squared$.
Here, the divergence of a second-rank tensor $A$ is computed column-wise:
\[
	\nabla \cdot A=\nabla\cdot (\b a_1,\ldots ,\b a_n):=(\nabla \cdot \b a_1,\ldots,\nabla\cdot\b a_n).
\]

The constitutive equation in elasticity (also termed Hooke's law) is
\begin{equation}
\label{eq:Hooke}
	\sigma = C * \varepsilon
\end{equation}
with the stress tensor $\sigma = \sigma_{ij}(\b x,t)$ and the dimensionless strain tensor $\varepsilon = \varepsilon_{kl}(\b x,t)$. Here, $*$ denotes the tensor multiplication. The material properties are incorporated into $C= C_{ijkl}(\b x)$, which is the rank-four stiffness tensor (with unit $\newton\per\meter\squared$).

In most practical settings of elastography, one makes simplifications concerning the material parameters of the biological tissue \cite{Doy12,ParDoyRub11}: Typically, the assumptions are that the material response is \emph{isotropic} and \emph{linear}.

\emph{Isotropy} means that $C$ is reduced to knowledge of two scalar quantites $\lambda$ and $\mu$, such that one has
\begin{equation}
	\label{eq:Isotrop}
	\sigma = 2\mu \varepsilon+\lambda \tr\varepsilon\; \b{Id}, 
\end{equation}
instead of \eqref{eq:Hooke}. Here, $\lambda(\b x)$ is called the first Lam\'e parameter, and $\mu(\b x)$ is called the shear modulus or the second Lam\'e parameter. The physical units of $\lambda$ and $\mu$ are the same as of $\sigma$ and $C$, i.e. $\unit{}{\newton\per\metre\squared}$. 

\emph{Linear material behavior} is encoded in the following representation of the strain~$\varepsilon$:
\begin{equation}
\label{eq:Strain}
	\varepsilon=\frac{1}{2}(\nabla \b u+\nabla \b u^\top).
\end{equation}
Note that with this, the quantity $\tr\varepsilon$ in \eqref{eq:Isotrop} is equal to $\nabla\cdot\b u$.

The equation of motion \eqref{eq:EqMot}, with \eqref{eq:Isotrop} and \eqref{eq:Strain} is then augmented with boundary conditions and appropriate sources. There are different choices for initial and boundary conditions. One option is
\begin{equation}
	\label{eq:Modell}
	\begin{aligned}
		\nabla(\lambda\nabla\cdot\b u)+2\nabla\cdot(\mu\;\varepsilon(\b u))-\rho\b u_{tt} &=\b F\qquad\text{on }\Omega \\
		\b u|_{\partial\Omega}					&= 0 \\
		\b u|_{\Omega\times\{t=0\} }			&= \b g \\
		\partial_t \b u|_{\Omega\times\{t=0\}}	&=\b h.
	\end{aligned}
\end{equation}
Existence, uniqueness and regularity properties for this model follow from the theory in \cite{MclThoYoo11} (for an earlier result for elastodynamic problems, see \cite{DauLio88e}). The initial and boundary values $\b g$ and $\b h$ are prescribed only for the complete the mathematical analysis. In practical physical experiments, the varying excitations enter in the source term $\b F$.

The standard problem of \emph{quantitative elastography} is then to determine the material parameters $\lambda(\b x)$, $\mu(\b x)$ and $\rho(\b x)$ in the equation \eqref{eq:Modell}, given the interior information $\b u(\b x,t)|_\Omega$.

\subsection{Adapting the quantities in the model for inversion}
Models based on \eqref{eq:Isotrop} and \eqref{eq:Modell} are widely used in elastography for simulating the elastic behavior of tissue \cite{ParDoyRub11,Doy12}. Nevertheless, the parameter which one reconstructs, is often only the shear modulus $\mu$. Sometimes, one sets $\lambda\nabla\cdot \b u=0$ or one assumes the incompressibility condition $\nabla\cdot\b u=0$. In these cases, $\lambda$ does not occur in the model at all (see also the discussion in \cite{ManOliDreMahKru01}).

We propose a different definition of quantities for the reconstruction. Precisely, we change the quantities and use the pressure $p$ defined by
\begin{equation}
\label{eq:Pressure}
	p(\b x,t):= \lambda(\b x) \nabla\cdot\b u(\b x,t).
\end{equation}
With these quantities, it follows from \eqref{eq:Modell} that
\begin{equation}
\label{eq:ModModel}
	\nabla p+2\nabla\cdot(\mu\;\varepsilon(\b u))-\rho\b u_{tt} =\b F,
\end{equation}
The inverse problem is now to recover $p(\b x,t)$, as well as $\mu(\b x)$ and $\rho(\b x)$, given $\b u(x,t)|_\Omega$. It is this problem which we address in our stability analysis.

Definition \eqref{eq:Pressure} has been used before, see e.g. \cite{McLZhaMan10,RagYag94}). Note that in tissue, one has that $\nabla\cdot\b u\ll 1$. Because of ill-posedness of differentiation, the quantity $\nabla\cdot \b u$ cannot be computed accurately from the data $\b u$ in experiments. On the other hand, one has that $\lambda\gg 1$. In numerical simulations, the pressure $p$ turns out to be of order 1 and therefore should not be neglected \cite{McLZhaMan10}.

Note that that $p(\b x,t)$ is an elastic quantity, but not a material parameter: it depends on the particular displacement field induced by the excitation. Knowledge of $p$ may or may not prove to be useful for diagnostic purposes. The reason, though, for introducing this quantity in the inversion model is that it numerically turned out to be useful. It was numerically observed that keeping $p$ in the model improves the reconstruction of the shear modulus $\mu$ \cite{McLZhaMan10}.

In our analysis, we will give a mathematical reason for using \eqref{eq:ModModel} instead of the first equation in \ref{eq:Modell}. Before we come to that, we give the relevant background from PDE theory which we use in our work.

\section{A result from linear PDE theory}
\label{sec:SolTheo}
We first treat the background from the general theory of linear PDE systems.

Let $\Omega$ be a bounded domain in $\mathbb{R}^n$ (smoothness requirements on $\Omega$ are specified later). We consider the redundant system of linear partial differential equations 
\begin{equation}
\label{eq:PDEred}
	\begin{aligned}
			\mathcal L(\b x,\frac{\partial}{\partial \b x})\b u &= \mathcal S & \text{on } \Omega         \\
			\mathcal B(\b x,\frac{\partial}{\partial \b x})\b u &= \varphi    & \text{on } \partial\Omega
	\end{aligned}
\end{equation}
for $m$ unknown functions $\b u(\b x)=(u_1(\b x),\ldots\,u_m(\b x)$), comprising in total $M$ equations. Here, $\mathcal L(\b x,\frac{\partial}{\partial \b x})$ is a matrix differential operator of dimension $M\times m$,

\begin{equation}
\label{eq:OperatorL}
	\mathcal L(\b x,\frac{\partial}{\partial \b x})=	
	\begin{pmatrix}
		L_{11}(\b x,\frac{\partial}{\partial\b x}) &\ldots& L_{1m}(\b x,\frac{\partial}{\partial\b x}) \\
		\vdots & \vdots & \vdots \\
		L_{M1}(\b x,\frac{\partial}{\partial\b x}) &\ldots& L_{Mm}(\b x,\frac{\partial}{\partial\b x})
	\end{pmatrix}.
\end{equation}

For each $1\leq i\leq M$, $1\leq j\leq m$ and for each point $\b x$, $L_{ij}(\b x,\frac{\partial}{\partial\b x})$ is a polynomial in $\frac{\partial}{\partial\b x}=(\frac{\partial}{\partial x_1}, \ldots, \frac{\partial}{\partial  x_n})$. Redundancy of the system means that there are possibly more equations than unknowns: $M\geq m$.

Similarly, $\mathcal B(\b x,\frac{\partial}{\partial\b x})$ has entries $B_{kj}(\b x,\frac{\partial}{\partial\b x})$ for $1\leq k\leq Q, 1\leq j \leq m$, consisting of $Q$ equations at the boundary. The operations are again polynomial in the second variable. -- $\mathcal S(\b x)$ is a vector of length $M$, and $\varphi(\b x)$ is a vector of length $Q$.

We now define the notions of \emph{ellipticity} and the \emph{principal part} of $\mathcal L$ and $\mathcal B$, respectively, in the sense of Douglis and Nirenberg \cite{DouNir55}.

\begin{defin}
	\label{def:Ell}
	Let integers $s_i,t_j\in\mathbb{Z}$ be given for each row $1\leq i\leq M$ and column $1\leq j\leq m$ with the following property: For $s_i + t_j\geq 0$, the order of $L_{ij}$ does not exceed $s_i + t_j$. For $s_i + t_j<0$, one has $L_{ij}=0$. Furthermore, the numbers are normalized so that for all $i$ one has $s_i\leq 0$. Such numbers $s_i,t_j$ are called \emph{Douglis-Nirenberg numbers}.
	
	The \emph{principal part} of $\mathcal L$ for this choice of numbers $s_i, t_j$ is defined as the matrix operator $\mathcal L_0$ whose entries $L_{0,ij}$ are composed of those terms in $L_{ij}$ which are exactly of order $s_i+t_j$.
	
	The principal part $\mathcal B_0$ of $\mathcal B$ is composed of the entries $B_{0,ij}$, which are composed of those terms in $B_{kj}$ which are exactly of order $\sigma_k + t_j$. The numbers $\sigma_k, 1\leq k \leq Q$ are computed as 
\begin{equation}
	\label{eq:Sigma}
	\sigma_k:=\max_{1\leq j\leq m} (b_{kj}-t_j),
\end{equation}
where $b_{kj}$ denotes the order of $B_{kj}$.
	
	Real directions $\gb \xi\neq 0$ with $\rank\mathcal L_0(\b x,\i\gb\xi) < m$ are called \emph{characteristic directions} of $\mathcal L$ at $\b x$. (The complex unit is denoted by the symbol $\i=\sqrt{-1}$.) The operator $\mathcal L(\b x,\frac{\partial}{\partial\b x})$ is said to be \emph{overdetermined elliptic} in $\Omega$ if for all $\b x\in\overline\Omega$ and for all real vectors $\gb \xi\neq 0$ one has that
\begin{equation}
\label{eq:OvDetEll}
	\rank\mathcal L_0(\b x,\i\gb \xi) = m
\end{equation}
for the $M\times m$ matrix $\mathcal L_0(\b x,\i\gb \xi)$.
\end{defin}

To illustrate in an example, we can consider the system
\begin{equation}
\label{eq:LopatBsp1}
	\begin{pmatrix} 
		 u_1 &+& \Delta u_2 \\
		\frac{\partial}{\partial{x_1}} u_1\\ \frac{\partial}{\partial{x_2}} u_1\end{pmatrix}=\begin{pmatrix} 0 \\ 
	 	f	 \\
	 	g 
	\end{pmatrix},\qquad\qquad 
	\begin{pmatrix}
		u_1|_{\partial\Omega}		\\
		& \nabla u_2\cdot\gb \nu|_{\partial\Omega}
	\end{pmatrix}=
	\begin{pmatrix}
		h\\
		k
	\end{pmatrix},
\end{equation}
where $\Omega$ is the unit circle in $\mathbb{R}^2$. With $\gb \nu$, we denote the unit normal on $\partial\Omega$. If we choose numbers $(t_j)_{j=1}^2= (1,3), (s_i)_{i=1}^3=(-1,0,0)$, we have the principal symbols
\begin{equation}
\label{eq:LopatBsp3}
	\mathcal L_0(\i\gb \xi) = 
	\begin{pmatrix} 
	1& -|\gb\xi|^2 \\ 
	 \i\xi_1 & 0\\ 
	 \i\xi_2 &0 
	\end{pmatrix},\qquad
	\mathcal B_0(\i\gb \xi)=
	\begin{pmatrix} 
	 1 & 0 \\  
	0 & 0 
	\end{pmatrix}
\end{equation}
and  $(\sigma_i)_{i=1}^2=(-1,-1)$. -- If we choose Douglis Nirenberg numbers $(t_j)_{j=1}^2= (1,2), (s_i)_{i=1}^3=(0,0,0)$, we have  the principal symbols
\begin{equation}
\label{eq:LopatBsp2}
	\mathcal L_0(\i\gb \xi) = 
	\begin{pmatrix} 
		0& -|\gb\xi|^2 \\
	 	\i\xi_1 &0\\
	 	\i\xi_2 &0
	 	\end{pmatrix},\qquad
	 \mathcal B_0(\i\gb \xi)=
	 \begin{pmatrix} 
			 	1 &0\\
	 	0&\i\gb\xi\cdot\gb \nu
	 \end{pmatrix}
\end{equation}
and $(\sigma_i)_{i=1}^2=(-1,-1)$. 

Note that the principal symbols differ in this case. Nevertheless, with both choices of numbers, $\mathcal L$ is overdetermined elliptic, as there exists a non-vanishing subdeterminant of $\mathcal L_0$ in both cases.

Next we define the condition of $\mathcal B$ covering $\mathcal L$, or the \emph{Lopatinskii boundary condition} \cite{Sol73}.
\begin{defin}
	\label{def:Lopat}
	Fix $\b y\in\partial\Omega$, and let $\gb \nu$ be the inward unit normal vector at $\b y$. Let $\gb \zeta$ be any non-zero tangential vector to $\Omega$ at $\b y$. Consider the half-line $\{\b y+z\,\gb \nu,z>0\}$ and the following system of ordinary differential equations on it:
	\begin{equation}
	\label{eq:Lopat}
		\begin{aligned}
				\mathcal{L}_0(\b y, \i\gb \zeta + \gb \nu\frac{d}{dz})\tilde{\b u}(z)&=0\qquad\qquad z>0\\
				\mathcal{B}_0(\b y, \i\gb \zeta + \gb \nu\frac{d}{dz})\tilde{\b u}(z)&=0\qquad\qquad z=0. 
		\end{aligned}
	\end{equation}
	Consider the vector space of all solutions $\tilde{\b u}$ of \eqref{eq:Lopat} which satisfy $\tilde{\b u}(z)\to 0$ for $z~\to~\infty$. If this vector space consists just of the trivial solution $\tilde{\b u}(z)\equiv 0$, then the Lopatinskii condition is said to be fulfilled for the pair $(\mathcal L,\mathcal B)$ at $\b y$, or $\mathcal B$ \emph{covers} the operator $\mathcal L$ at $\b y$.
\end{defin}
In the example \eqref{eq:LopatBsp1} above, the equations \eqref{eq:Lopat}, together with the orthogonality condition $\gb\nu\cdot\gb\zeta=0$, yield 
\[
\begin{aligned}
	(-|\gb \zeta|^2+\frac{d^2}{dz^2})\tilde u_2(z) &= 0 \\
	(\i\zeta_1+\nu_1\frac{d}{dz})\tilde u_1(z) &=0 \\
	(\i\zeta_2+\nu_2\frac{d}{dz})\tilde u_1(z) &=0.	
\end{aligned}
\]
The last two equations, together with $\gb\zeta\cdot\gb\nu=0$, imply $\tilde u_1(z)=0$. For $\tilde u_2(z)$, there is the solution $\tilde u_2(z)=C e^{-| \gb\zeta|z}$ going to $0$ for $z\to\infty$. -- In the case \eqref{eq:LopatBsp3}, there is no requirement to restrain $u_2(z)$, therefore the Lopatinskii boundary condition is not satisfied with this choice of Douglis-Nirenberg numbers. In the other case \eqref{eq:LopatBsp2}, the requirement $\mathcal B_0(\b y,\i\gb\zeta + \gb\nu\frac{d}{dz})\tilde{\b u}(z)=0$ leads to $\frac{d}{dz}\tilde u_2(z)=-C|\gb\zeta|e^{-|\gb\zeta| z}=0$, therefore $C=0$ and consequently $\tilde u_2(z)=0$. Therefore the Lopatinskii boundary condition is satisfied in this case. 

A typical example of an overdetermined elliptic systems with Lopatinskii boundary conditions is
\begin{equation}
\label{eq:Helmholtz}
	\begin{aligned}
		\nabla\times\b u&= f\\
	 	\nabla\cdot\b u &= g
	 \end{aligned}
\end{equation}
on a domain $\Omega$ with normal component $\b u\cdot\b v|_{\partial\Omega}$ given on the boundary.

Another example of an overdetermined elliptic system with Lopatinskii boundary condition is the system of time-harmonic Maxwell's equations, where $\b u=(\b H, \b E)$ satisfies
\begin{equation}
\label{eq:Maxwell}
	\begin{aligned}
		\nabla\times\b H &= -\kappa_1 \b E \qquad & \nabla \times \b E&= \kappa_2\b H\\
		\nabla\cdot \b H &=0 & \qquad \nabla\cdot \b E &= \rho
	\end{aligned}
\end{equation}
with the normal component of $\b H$ as well as the tangential component of $\b E$ given on the boundary (see \cite[\S2]{Sol73} for \eqref{eq:Helmholtz} and \eqref{eq:Maxwell})

In the context of hybrid imaging, examples of overdetermined elliptic systems with Lopatinskii boundary conditions have been considered in \cite{Bal12b,BalMos13}.

\medskip

For investigating the stability for linearized quantitative elastography, we are going to use the a-priori estimate in \cite{Sol73} for the solutions of system \eqref{eq:PDEred}. This theory does not need smooth coefficients, but coefficients in the Sobolev spaces $W^\alpha_p(\Omega)$ (for the usual definition, also for noninteger values of $\alpha$, see \cite{Ada75}).
In the setting of \cite{Sol73} with Douglis-Nirenberg numbers $t_j,s_i,\sigma_k$, one has that the operator $\mathcal A$ with
\begin{equation}
\label{eq:GesSys}
	\mathcal A\b u=\begin{pmatrix}\mathcal L \b u\\ \mathcal B\b u\end{pmatrix}
\end{equation}
acts on the space
\begin{equation}
\label{eq:Dpl}
	D(p,l) := W^{l+t_1}_p(\Omega)\times \ldots\times W^{l+t_m}_p(\Omega),
\end{equation}
where $l\geq 0$, $p>1$.
Under suitable restrictions on the coefficients $L_{ij}$ and $B_{kj}$ (specified below in the conditions of the theorem), the operator $\mathcal A$ is bounded with range in 
\begin{equation}
\label{eq:Rpl}
	R(p,l):=W^{l-s_1}_p(\Omega)\times \ldots\times W^{l-s_M}_p(\Omega)\times W^{l-\sigma_1-\frac{1}{p}}_p(\partial\Omega)\times \ldots\times W^{l-\sigma_Q-\frac{1}{p}}_p(\partial\Omega).
\end{equation}

Using the operator $\mathcal A$ in \eqref{eq:GesSys}, the equations \eqref{eq:PDEred} read
\begin{equation}
\label{eq:Gleichung}
\mathcal A\b u=\begin{pmatrix}\mathcal S\\ \varphi\end{pmatrix}.
\end{equation}

In formulating the restrictions on the coefficients of $\mathcal L$ and $\mathcal B$, we simplify the version of \cite[Thm. 1.1]{Sol73} for the following result.

\begin{theorem}
	\label{thm:Sol}
	Let integers $l\geq 0, p>1$ be given. Let $(\mathcal S,\varphi)$, the data from \eqref{eq:PDEred} be in $R(p,l)$ as defined in \eqref{eq:Rpl}. Let Douglis-Nirenberg numbers $s_i$ and $t_j$ be given for $\mathcal L$ in \eqref{eq:PDEred}, and let $\sigma_k$ be as in Definition \ref{def:Ell}. Let $\Omega$ be a bounded domain with boundary in $C^{l+\max t_j}$. Assume furthermore that $p(l-s_i)>n$ and $p(l-\sigma_k)>n$ for all $i$ and $k$. Let the coefficients of $L_{ij}$ be in $W^{l-s_i}_p(\Omega)$, and let the coefficients of $B_{kj}$ be in $W^{l-\sigma_k-\frac{1}{p}}(\Omega)$. Then the following statements are equivalent:
	\begin{enumerate}
	\item $\mathcal L$ in \eqref{eq:PDEred} is overdetermined elliptic (see \eqref{eq:OvDetEll}) and the Lopatinskii covering condition \eqref{eq:Lopat} is fulfilled for ($\mathcal L,\mathcal B$) on $\partial\Omega$. 
	
	\item There exists a left regularizer $\mathcal R$ for the operator $\mathcal A=\mathcal L\times\mathcal B$ in \eqref{eq:GesSys}, that is, we have
	\begin{equation}
	\label{eq:leftReg}
		\mathcal R\mathcal A=\mathcal I-\mathcal T
	\end{equation}
	with $\mathcal T$ compact from $R(p,l)$ in \eqref{eq:Rpl} to $D(p,l)$ in \eqref{eq:Dpl}.
	
	\item The following a-priori estimate holds
	\begin{equation}
	\label{eq:SolloEst}
		\sum_{j=1}^m  \| u_j \|_{W_p^{l+t_j}(\Omega)} \leq
		C_1 (\sum_{i=1}^M\| \mathcal S_i\|_{W_p^{l-s_i}(\Omega)} + \sum_{k=1}^Q \| \varphi_k \|_{W_p^{l-\sigma_j-\frac{1}{p}}(\partial\Omega)}) +
		C_2 \sum_{t_j>0} \| u_j\|_{L^p(\Omega)},
	\end{equation}
where $u_j$ is the $j$-th component of the solution $\b u$ of \eqref{eq:GesSys}
	\end{enumerate}
\end{theorem}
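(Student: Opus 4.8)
The plan is to follow the classical Douglis--Nirenberg--Solonnikov scheme and establish the cycle (1)~$\Rightarrow$~(2)~$\Rightarrow$~(3)~$\Rightarrow$~(1), with the analytic substance concentrated in the first and last steps; the theorem is then a specialization of \cite[Thm.~1.1]{Sol73} to integer $l$ and to the coefficient regularity stated here.

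For (1)~$\Rightarrow$~(2) I would build the left regularizer $\mathcal R$ by localization and patching. Cover $\overline\Omega$ by finitely many small balls, separated into interior balls and balls meeting $\partial\Omega$, and fix a subordinate partition of unity $\{\chi_\alpha\}$ together with cut-offs $\psi_\alpha\equiv 1$ on $\mathrm{supp}\,\chi_\alpha$. On an interior ball about $\b x_0$ one freezes coefficients: overdetermined ellipticity \eqref{eq:OvDetEll} means $\mathcal L_0(\b x_0,\i\gb\xi)$ has full column rank for every real $\gb\xi\neq 0$, so the $m\times m$ matrix $(\mathcal L_0^{\ast}\mathcal L_0)(\b x_0,\i\gb\xi)$ is invertible, and $(\mathcal L_0^{\ast}\mathcal L_0)^{-1}\mathcal L_0^{\ast}$ supplies the symbol of a local left parametrix of the appropriate Douglis--Nirenberg orders, realized as a Fourier multiplier after a cut-off of the singularity at $\gb\xi=0$. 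On a boundary ball one flattens $\partial\Omega$ (this is where the $C^{l+\max t_j}$ regularity of $\Omega$ is used), freezes coefficients, and solves the constant-coefficient half-space problem $\mathcal L_0\tilde{\b u}=\mathcal S_0,\ \mathcal B_0\tilde{\b u}=\varphi_0$; after a tangential Fourier transform this is exactly the ODE system \eqref{eq:Lopat}, and the Lopatinskii condition of Definition~\ref{def:Lopat} makes the boundary-symbol map an isomorphism onto the space of decaying solutions, producing a Poisson/boundary parametrix with the mapping properties dictated by the shifts $(s_i,t_j,\sigma_k)$ in $R(p,l)\to D(p,l)$. One glues, $\mathcal R:=\sum_\alpha\psi_\alpha\,\mathcal R_\alpha\,\chi_\alpha$; the error $\mathcal R\mathcal A-\mathcal I$ consists of commutators $[\mathcal A,\chi_\alpha]$, which drop one order, and coefficient-freezing terms, whose operator norm is small on small balls because the coefficients lie in $W^{l-s_i}_p(\Omega)$ with $p(l-s_i)>n$ (and similarly for the $B_{kj}$), so multiplication by them is continuous on the relevant Sobolev spaces and they are uniformly continuous by Sobolev embedding. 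Every such remainder gains regularity and is hence compact by Rellich--Kondrachov, which is \eqref{eq:leftReg}.

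The implication (2)~$\Rightarrow$~(3) is immediate: applying $\mathcal R$ to $\mathcal A\b u=(\mathcal S,\varphi)^{\top}$ gives $\b u=\mathcal R(\mathcal S,\varphi)^{\top}+\mathcal T\b u$; boundedness of $\mathcal R\colon R(p,l)\to D(p,l)$ yields the $C_1$-term of \eqref{eq:SolloEst}, while compactness of $\mathcal T$, which factors through a Rellich embedding losing one derivative on the positively-shifted components, gives $\|\mathcal T\b u\|_{D(p,l)}\le\epsilon\|\b u\|_{D(p,l)}+C_\epsilon\sum_{t_j>0}\|u_j\|_{L^p(\Omega)}$ via the usual interpolation inequality, after which the $\epsilon$-term is absorbed. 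For (3)~$\Rightarrow$~(1), I would argue by contradiction with explicit quasimodes. If \eqref{eq:OvDetEll} failed at some $\b x_0\in\overline\Omega$ and real $\gb\xi_0\neq 0$, pick $\b a\neq 0$ with $\mathcal L_0(\b x_0,\i\gb\xi_0)\b a=0$ and test \eqref{eq:SolloEst} with the family $\b u_\tau(\b x)=\b a\,e^{\i\tau\gb\xi_0\cdot\b x}\chi(\b x)$, $\chi$ supported near $\b x_0$, the $j$-th component weighted by $\tau^{-t_j}$: the left-hand side is of order $\tau^{l}$ while on the right the principal part is annihilated and every remaining contribution grows strictly slower in $\tau$ --- a contradiction as $\tau\to\infty$. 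If instead the Lopatinskii condition failed at some $\b y\in\partial\Omega$, take a nontrivial decaying solution $\tilde{\b u}$ of \eqref{eq:Lopat} and test with the boundary-layer quasimode $\b u_\tau(\b x)=e^{\i\tau\gb\zeta\cdot\b x'}\,\tilde{\b u}\bigl(\tau\,\mathrm{dist}(\b x,\partial\Omega)\bigr)\chi(\b x)$ concentrating at $\b y$; the same scaling comparison forces $\tilde{\b u}\equiv 0$, contradicting the choice. This closes the cycle.

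I expect the boundary parametrix in (1)~$\Rightarrow$~(2) to be the main obstacle: one must turn the qualitative Lopatinskii condition into bounds on the boundary symbol that are uniform in the tangential frequency, bookkeep the exact Sobolev orders through the trace and extension operators so that each $\mathcal R_\alpha$ respects the shifts $(s_i,t_j,\sigma_k)$, and check that the low-regularity coefficient hypotheses $p(l-s_i)>n$ and $p(l-\sigma_k)>n$ are precisely what keeps all commutator and coefficient-freezing remainders compact rather than merely bounded. This is the technical heart carried out in \cite{Sol73}, which our theorem only restates in a convenient form.
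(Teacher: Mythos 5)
The paper does not prove this theorem at all: it is presented as a restatement (``we simplify the version of \cite[Thm.~1.1]{Sol73}'') and the entire burden is delegated to Solonnikov, so there is no in-paper proof to match your argument against. What you have written is a correct reconstruction, in outline, of the classical Douglis--Nirenberg--Agranovich--Solonnikov scheme that underlies the cited result: interior parametrix from the left-inverse symbol $(\mathcal L_0^{*}\mathcal L_0)^{-1}\mathcal L_0^{*}$, boundary parametrix from the half-space/Lopatinskii analysis, gluing, Ehrling absorption for (2)$\Rightarrow$(3), and oscillatory resp.\ boundary-layer quasimodes for (3)$\Rightarrow$(1); this is indeed what one finds in \cite{Sol73} and \cite{Agr65}. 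Two places where your sketch is coarser than what the theorem actually asserts and would need real work: first, a generic Ehrling argument applied to a compact $\mathcal T$ yields a lower-order term $\sum_{j}\|u_j\|$ over \emph{all} components, whereas \eqref{eq:SolloEst} restricts it to $t_j>0$ --- this restriction is load-bearing later in the paper (it is exactly why Theorem~\ref{thm:StabRho} has no kernel for the $t_j=0$ variable $\delta\rho$), and it comes from the structure of Solonnikov's parametrix rather than from abstract compactness; second, the uniform-in-tangential-frequency invertibility of the boundary-symbol map, which you correctly flag as the technical heart, is precisely the step that cannot be waved through and occupies most of \cite[\S\S 3--6]{Sol73}. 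As a blind proposal for how the theorem is proved in the literature your account is sound; as a self-contained proof it is, as you yourself note, a program rather than an argument.
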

The assertion of the theorem gives a criterion for the existence of a left regularizer for the overdetermined redundant systems. For the case of boundary value problems for square systems with $M=m$, such an equivalence is established in the classical work of \cite{Agr65}. For square systems, one has the stronger statement that ellipticity and Lopatinskii condition are equivalent to the Fredholm property of a differential operator (which also needs the existence of a right regularizer). -- The criterion for redundant systems with $M\geq m$ was established in \cite{Sol73}, and investigates the stability estimate, and even gives a representation formula for the solution, provided it exists. The existence of a right regularizer $\mathcal Q$ with $\mathcal A\mathcal Q = \mathcal I - \mathcal T$ (which would yield local existence) cannot be assured in general for overdetermined systems. 

We will exploit this criterion for the linearized version of quantitative elastography.

\section{Stability analysis}
\label{sec:Stab}
\subsection{Setting and notation}
\label{sec:Setting}
For treating the hybrid imaging problem described in \ref{subsec:Exp}, we take the adapted forward model \eqref{eq:Modell} + \eqref{eq:Pressure}. We recast the equations of the forward problem for the displacement, and the interior information from the measurements, respectively, into a single system of partial differential equations:
\begin{equation}
\label{eq:GesModell}
	\begin{aligned}
		\nabla p+2\nabla\cdot(\mu\;\varepsilon(\b u))-\rho\b u_{tt} &=\b F \\
		\b u(x,t)  								&= \b K \\
		\b u|_{\partial\Omega}					&= 0 \\
		\b u|_{\Omega\times\{t=0\} }			&= \b g \\
		\partial_t \b u|_{\Omega\times\{t=0\}}	&=\b h 
	\end{aligned}
\end{equation}
Here, we are given $\b F$ (the excitation force), $\b g$ and $\b h$, as well as the interior information $\b K$. We formally keep $\b u$ and $\b K$ distinct in the second equation, as $\b u$ is treated as variable of the system, and $\b K$ represents the data. -- We aim at a quantitative estimate such as \eqref{eq:SolloEst} with the measurement data in the inhomogeneity.

We consider the system \eqref{eq:GesModell}, for the variables $\b u$, $p$, $\mu$, $\rho$; therefore, we consider it as nonlinear, involving multiplication of the unknowns. In order to make it tractable for the analysis, we linearize this system to provide equations of the form \eqref{eq:PDEred}. Several of these linearizations will be considered in the subsequent theory.

We first consider the parameter-to-solution operator
\begin{equation}
\label{eq:RefDispl}
	\mathcal V:(p(\b x,t),\mu(\b x),\rho(\b x))\qquad\mapsto\qquad \b u(\b x,t),
\end{equation}
which maps the choice of parameters $(p,\mu,\rho)$ to the displacement field $\b u$ satisfying \eqref{eq:GesModell}. Then we consider the linearization of $\mathcal V$ at a reference state $(p,\mu,\rho)$,
\[
	\mathcal V'(p,\mu,\rho):\begin{pmatrix}\delta p\\ \delta \mu \\ \delta\rho\end{pmatrix} \qquad\longmapsto\qquad \begin{pmatrix} \delta u_1 \\ \delta u_2 \\ \delta u_3\end{pmatrix}=:\delta \b u.
\]
By formal differentiation of \eqref{eq:GesModell}, we find that, at the reference state $\b u=\mathcal V(p,\mu,\rho)$, the increment $\delta \b u$ satisfies the equations 
\begin{equation}
\label{eq:LinEqns}
\begin{aligned}
	\nabla \delta p + 2\nabla\cdot(\delta\mu\;\varepsilon(\b u))+ 2\nabla\cdot(\mu\;\varepsilon(\delta\b u))-\delta\rho\;\b u_{tt}-\rho(\delta\b u)_{tt} &= 0\\
	\delta \b u &= \delta \b K \\
		\delta \b u|_{\partial\Omega}				& = 0	 \\
		\delta \b u|_{\Omega\times\{t=0\} }			& = 0	 \\
		\partial_t \delta \b u|_{\Omega\times\{t=0\}}	& = 0 .
\end{aligned}
\end{equation}
Note that $\b F$ does not depend on the reference state, therefore no inhomogeneity appears in the first equation in \eqref{eq:LinEqns}.

Observe that \eqref{eq:LinEqns} is a system of differential equations for the functions $(\delta p,\delta\mu,\delta\rho,\delta u_1,\delta u_2,\delta u_3)=(\delta p,\delta\mu,\delta\rho,\delta\b u)$. The system is linear in these unknowns.

We write
\begin{equation}
\label{eq:FPMuRho}
	\mathcal F_{p\mu\rho}(\delta p,\delta\mu,\delta\rho,\delta \b u):= \nabla \delta p + 2\nabla\cdot(\delta\mu\;\varepsilon(\b u))+ 2\nabla\cdot(\mu\;\varepsilon(\delta\b u))-\delta\rho\;\b u_{tt}-\rho(\delta\b u)_{tt},
\end{equation}
\begin{equation}
\label{eq:Boundary}	
		\mathcal B(\delta \b u):=\begin{pmatrix}
		\delta \b u|_{\partial\Omega}					 \\
		\delta \b u|_{\Omega\times\{t=0\} }				 \\
		\partial_t \delta \b u|_{\Omega\times\{t=0\}}	\end{pmatrix},
\end{equation}
and we introduce the operator
\begin{equation}
\label{eq:LPMuRho}
\begin{aligned}
	\mathcal L_{p\mu\rho}(\delta p,\delta\mu,\delta\rho,\delta \b u)&:=
	\begin{pmatrix}
		\mathcal F_{p\mu\rho}(\delta p,\delta\mu,\delta\rho,\delta \b u) \\	
		\delta\b u
	\end{pmatrix}	\\
	&=
	\begin{pmatrix} \nabla \delta p + 2\nabla\cdot(\delta\mu\;\varepsilon(\b u))+ 2\nabla\cdot(\mu\;\varepsilon(\delta\b u))-\delta\rho\;\b u_{tt}-\rho(\delta\b u)_{tt}\\
	\delta\b u\end{pmatrix}.
	\end{aligned}
\end{equation}

The operator $\mathcal L_{p\mu\rho}$ in \eqref{eq:LPMuRho} is the linearization of the redundant system \eqref{eq:GesModell} with respect to $p$, $\mu$ and $\rho$. Note that $\mathcal L_{p\mu\rho}$ is a matrix differential operator like $\mathcal L$ in \eqref{eq:PDEred}.

Apart from $\mathcal L_{p\mu\rho}$, we also introduce the operators corresponding to directional derivatives with respect to only \emph{one} parameter:
\begin{align}
\label{eq:LP}
	\mathcal L_p(\delta p,\delta \b u):=\mathcal L_{p\mu\rho}(\delta p,0,0,\delta \b u)&= \begin{pmatrix}\nabla \delta p + 2\nabla\cdot(\mu\;\varepsilon(\delta\b u))-\rho(\delta\b u)_{tt}  \\  \delta \b u\end{pmatrix}\\
\label{eq:LMu}
	\mathcal L_\mu(\delta \mu,\delta \b u):=\mathcal L_{p\mu\rho}(0,\delta\mu,0,\delta \b u)&=   \begin{pmatrix}2\nabla\cdot(\delta\mu\;\varepsilon(\b u)) + 2\nabla\cdot(\mu\;\varepsilon(\delta\b u))-\rho(\delta\b u)_{tt}\\ \delta \b u\end{pmatrix}\\
\label{eq:LRho}
	\mathcal L_\rho(\delta \rho,\delta \b u):=\mathcal L_{p\mu\rho}(0,0,\delta\rho,\delta \b u)&=  \begin{pmatrix} 2\nabla\cdot(\mu\;\varepsilon(\delta\b u))-\delta\rho\;\b u_{tt}-\rho(\delta\b u)_{tt}  \\\delta \b u\end{pmatrix}
\end{align}
We also use the combination 
\begin{align}
\label{eq:LPMu}
	\mathcal L_{p\mu}(\delta p,\delta \mu,\delta \b u):=\mathcal L_{p\mu\rho}(\delta p,\delta\mu,0,\delta \b u)&=   \begin{pmatrix}\delta p + 2\nabla\cdot(\delta\mu\;\varepsilon(\b u)) + 2\nabla\cdot(\mu\;\varepsilon(\delta\b u))-\rho(\delta\b u)_{tt}\\ \delta \b u\end{pmatrix}.
\end{align}
In the stability analysis, we will make a comparison of the properties of these operators.

In the model \eqref{eq:GesModell} which we started from, we incorporated the definition of the pressure in \eqref{eq:Pressure}. An alternative is to use the original model \eqref{eq:Modell} only, which involved the first Lam\'e parameter $\lambda$. This corresponds to re-substituting $p=\lambda\nabla\cdot\b u$ in \eqref{eq:GesModell}. In exact analogy to constructing $\mathcal L_{p\mu\rho}$ one can form the forward operator $\mathcal V_\lambda$. One then considers its linearization $\mathcal V'_\lambda(\lambda,\mu,\rho):(\delta\lambda,\delta\mu,\delta\rho)\mapsto\delta\b u$, and introduces the operator $\mathcal L_{\lambda\mu\rho}$. We particularly will consider
\begin{equation}
\label{eq:LLambda}
	\mathcal L_\lambda(\delta \lambda,\delta \b u):=\mathcal L_{\lambda\mu\rho}(\delta\lambda,0,0,\delta\b u)=
	\begin{pmatrix} 
	\nabla( \delta \lambda\;\nabla\cdot\b u) + \nabla(\lambda\;\nabla\cdot\delta\b u) + 
	2\nabla\cdot(\mu\;\varepsilon(\delta\b u))-
	\rho(\delta\b u)_{tt}  \\ \deltaÊ\b u
	\end{pmatrix}
\end{equation}

Up to now, we defined several differential operators $\mathcal L$ of form \eqref{eq:OperatorL}. As in \eqref{eq:GesSys}, we now combine them with boundary data and then form equations \[
\mathcal A\b w = (\mathcal S,0)\]
 as in \eqref{eq:PDEred} resp. \eqref{eq:Gleichung}. The vector $\b w$ changes according to the variables which are in the system, specified below. For the inhomogeneity $\mathcal S$, we have 
\begin{equation}
\label{eq:Inhom}
\mathcal S=(0,0,0,\delta K_1,\delta K_2, \delta K_3)^\top.
\end{equation}
We introduce the operator 
\begin{equation}
\label{eq:SystemPMuRho}
	\mathcal A_{p\mu\rho}(\delta p,\delta\mu,\delta\rho,\delta \b u)
	=	
	\begin{pmatrix}
		\mathcal L_{p\mu\rho} \;(\delta p,\delta\mu,\delta\rho,\delta\b u) \\
		 \mathcal B \;(\delta \b u)|_{\partial\Omega}
	\end{pmatrix},
\end{equation}
from which we get the following specializations.

The first system is
\begin{equation}
\label{eq:SystemP}
	\mathcal A_p \;(\delta p,\delta\b u):= 
\mathcal A_{p\mu\rho}(\delta p,0,0,\delta \b u)	
	=
	\begin{pmatrix}
		\mathcal L_p \;(\delta p,\delta\b u) \\
		 \mathcal B \;(\delta\b u)|_{\partial\Omega}
	\end{pmatrix}
	=
	\begin{pmatrix}
		\mathcal S\\ 
		0
	\end{pmatrix}.
\end{equation}
In $\mathcal A_p$, there are more equations than unknowns, namely 6 equations in $\Omega$ for 4 unknowns $(\delta p,\delta u_1,\delta u_2,\delta u_3)$.

The system 
\begin{align}
\label{eq:SystemMu}
	\mathcal A_\mu \;(\delta \mu,\delta\b u):= 
\mathcal A_{p\mu\rho}(0,\delta\mu,0,\delta \b u)	= 
	\begin{pmatrix}
		\mathcal L_\mu \;(\delta \mu,\delta\b u) \\
		 \mathcal B \;(\delta\b u)|_{\partial\Omega}
	\end{pmatrix}
	=
	\begin{pmatrix}
		\mathcal S\\ 
		0
	\end{pmatrix}
\end{align}
has 6 interior equations for the unknowns $(\delta \mu,\delta u_1,\delta u_2,\delta u_3)$.

The system
\begin{align}
\label{eq:SystemRho}
	\mathcal A_\rho \;(\delta \rho,\delta\b u):= 
\mathcal A_{p\mu\rho}(0,0,\delta\rho,\delta \b u)	= 
	\begin{pmatrix}
		\mathcal L_\rho \;(\delta \rho,\delta\b u) \\
		 \mathcal B \;(\delta\b u)|_{\partial\Omega}
	\end{pmatrix}
	=
	\begin{pmatrix}
		\mathcal S\\ 
		0
	\end{pmatrix}
\end{align}
has 6 interior  equations for the unknowns $(\delta \rho,\delta u_1,\delta u_2,\delta u_3)$.

The system
\begin{align}
\label{eq:SystemPMu}
	\mathcal A_{p\mu} \;(\delta p,\delta\mu,\delta\b u):= 
\mathcal A_{p\mu\rho}(\delta p,\delta\mu,0,\delta \b u)	= 
	\begin{pmatrix}
		\mathcal L_{p\mu} \;(\delta p,\delta\mu,\delta\b u) \\
		 \mathcal B \;(\delta\b u)|_{\partial\Omega}
	\end{pmatrix}
	=
	\begin{pmatrix}
		\mathcal S\\ 
		0
	\end{pmatrix}
\end{align}
has 6 interior  equations for the unknowns $(\delta \rho,\delta u_1,\delta u_2,\delta u_3)$.

All of these are linear differential systems which are redundant systems of form \eqref{eq:PDEred} with $M\geq m$. Therefore we can apply the methodology of Section~\ref{sec:SolTheo} to these.

Up to now, we only incorporated information from one imaging experiment in our operators. It is possible, though, to conduct more than one imaging experiment, and the consideration of multiple measurements in the inverse problem is typical in hybrid imaging. Therefore, we can use different excitations $\b F_i$ and possibly different functions $\b g_i,\b h_i$ in \eqref{eq:GesModell} and obtain different versions of $\b u_i(\b x,t)$ and interior information $\b K_i(\b x,t)$. For each experiment, we also have a different variable $p_i$ in the system. While these quantities change with each excitation, the material parameters $\lambda$, $\mu$ and $\rho$ remain the same. 

For example, we write 
\begin{align}
\label{eq:SystemMu2}
	\mathcal A_\mu^{(2)} \;(\delta \mu,\delta\b u_1, \delta\b u_2):= 
	\begin{pmatrix}
		\mathcal L_\mu^{(1)}(\delta \mu,\delta \b u_1) \\
		\mathcal B \;(\delta\b u_1)|_{\partial\Omega} \\
		\mathcal L_\mu^{(2)}(\delta \mu,\delta \b u_2) \\
		\mathcal B \;(\delta\b u_2)|_{\partial\Omega}
	\end{pmatrix}
	=
	\begin{pmatrix}
		\mathcal S^{(1)}\\ 
		0\\ 
		\mathcal S^{(2)}\\
		0
	\end{pmatrix}
\end{align}
for the system corresponding to 2 experiments. Here, the operators $\mathcal L_\mu^{(i)}$ for $i=1,2$ are
\[
\mathcal L_\mu^{(i)}(\delta\mu,\delta\b u_i)
=
\begin{pmatrix}
		2\nabla\cdot(\delta\mu\;\varepsilon(\b u_i)) + 2\nabla\cdot(\mu\;\varepsilon(\delta\b u_i))-\rho(\delta\b u_i)_{tt}\\ \delta \b u_i\end{pmatrix}.
\]
In the inhomogeneity, we have the quantities
$\mathcal S^{(i)}=(0,0,0,\delta K_1^{(i)},\delta K_2^{(i)},\delta K_3^{(i)})$ for $i=1,2$.

Comparison of $\mathcal A_\mu$ in \eqref{eq:SystemMu} with $\mathcal A_\mu^{(2)}$ in \eqref{eq:SystemMu2} shows the effect of adding one more experiment in the system: there are 6 more equations and 3 new variables: together 12 equations and 7 unknowns.

The shown procedure of addition addition experiments can be applied to any of the operators in \eqref{eq:SystemMu}, \eqref{eq:SystemRho}, \eqref{eq:SystemPMuRho}. For each experiment we add, the inequality $M\geq m$ in the nomenclature of Section~\ref{sec:SolTheo} is fulfilled and the system is redundant.

\bigskip 
For reasons which are apparent later, we can augment the boundary operator with additional constraints and use 
\begin{equation}
\label{eq:BoundNew}
	\begin{aligned}
	\mathcal B'(\delta \b u)=\begin{pmatrix}
		\delta \b u|_{\partial\Omega}					 \\
		\delta \b u|_{\Omega\times\{t=0\} }				 \\
		\partial_t \delta \b u|_{\Omega\times\{t=0\}}	\\
		\delta p|_{\partial\Omega}\\
		\delta \mu|_{\partial\Omega}\end{pmatrix} = 0 .
	\end{aligned}
\end{equation}
Normally, we will use \eqref{eq:Boundary} as boundary operator. The conditions \eqref{eq:BoundNew} will be used in special cases which are separately indicated.

\bigskip

Note that in this section, we have given the general form of the linearization operators for the \emph{dynamic case} on a cylindrical domain $\Omega\times T$. Sometimes, we will consider these operators in the \emph{quasi-static case} with 
\begin{equation}
\label{eq:QuasiStatic}
	\b u_{tt} = (\delta \b u)_{tt} = 0,
\end{equation}
using only the spatial domain $\Omega$. This is specially indicated in each case.

\subsection{Ellipticity}
\label{sec:Ell}
We want to apply the methodology of Section~\ref{sec:SolTheo}, and use the criterion in Theorem~\ref{thm:Sol}. Therefore, we have to we determine the ellipticity condition in Definition~\ref{def:Ell} for the operators $\mathcal L_p$, $\mathcal L_\mu$, $\mathcal L_\rho$ in \eqref{eq:LP}, \eqref{eq:LMu}, \eqref{eq:LRho}. We first determine the principal symbol and possible characteristic directions for the operator $\mathcal L_{p\mu\rho}$ in \eqref{eq:LPMuRho}, which is treated in Proposition \ref{prop:Ellip}. From this analysis we then draw some corollaries concerning the ellipticity of $\mathcal L_p$, $\mathcal L_\mu$, $\mathcal L_\rho$, as well as ellipticity of $\mathcal L_{p\mu}$.

For the analysis of $\mathcal L_{p\mu\rho}$, we choose Douglis-Nirenberg numbers $(t_j)_{j=1}^6=(1,1,0,2,2,2)$ and  corresponding to the variables $(\delta p,\delta\mu,\delta\rho,\delta u_1,\delta u_2,\delta u_3)$ and numbers $(s_i)_{i=1}^6=(0,0,0,-2,-2,-2)$ corresponding to the six equations. If there are less variables in the system (as in $\mathcal L_p$, $\mathcal L_\mu$, $\mathcal L_\rho$, $\mathcal L_{p\mu}$), then only the corresponding Douglis-Nirenberg numbers are used (e.g., for the analysis of $\mathcal L_p$, we have the numbers $(1,2,2,2)$ for the variables ($\delta p,\delta u_1,\delta u_2,\delta u_3)$ in $\mathcal L_p$). 

\begin{prop}
	\label{prop:Ellip}
Let $\mathcal L$ be the operator $\mathcal L_{p\mu\rho}$ in \eqref{eq:LPMuRho}. 

\begin{enumerate}[label={\alph*)}] 
\item The principal symbol of $\mathcal L_{p\mu\rho}$ (in the dynamic case) is
\begin{multline}
\label{eq:princSymb}
	\mathcal L_0((\b x,t),\i\gb \xi)=\\
	\begin{pmatrix} 
		\i\xi_1  & 2 \i\gb \xi_s\cdot\varepsilon(\b u)_1 & -(u_1)_{tt} & -\mu\xi_1^2 -\mu|\gb \xi|^2+\rho\xi_4^2 & -\mu \xi_1\xi_2 & -\mu \xi_1\xi_3  \\
		\i\xi_2  & 2 \i\gb \xi_s\cdot\varepsilon(\b u)_2 & -(u_2)_{tt} & -\mu \xi_1\xi_2 & -\mu\xi_2^2 -\mu|\gb \xi|^2 + \rho\xi_4^2 & -\mu \xi_2\xi_3 \\
		\i\xi_3  & 2 \i\gb \xi_s\cdot\varepsilon(\b u)_3 & -(u_3)_{tt} & -\mu \xi_1\xi_3 & -\mu \xi_2\xi_3 & -\mu\xi_3^2 -\mu|\gb \xi|^2+\rho\xi_4^2 \\  
		0&0&0&1&0&0\\
		0&0&0&0&1&0\\
		0&0&0&0&0&1
	\end{pmatrix} ,
\end{multline}
where $\gb \xi_s:=(\xi_1,\xi_2,\xi_3)$ for $\gb \xi\in\mathbb{R}^4$.
		
\item 
	For every point $(\b x,t)$, the operator $\mathcal L_{p\mu\rho}$ is not overdetermined elliptic.

\item In the quasi-static case \eqref{eq:QuasiStatic}, the principal symbol of $\mathcal L_{p\mu\rho}=\mathcal L_{p\mu}$ is
\begin{multline}
\label{eq:princSymbSpat}
	\mathcal L_0(\b x,\i\gb \xi)=\\
	\begin{pmatrix} 
		\i\xi_1  & 2 \i\gb \xi\cdot\varepsilon(\b u)_1 &  -\mu\xi_1^2 -\mu|\gb \xi|^2 & -\mu \xi_1\xi_2 & -\mu \xi_1\xi_3  \\
		\i\xi_2  & 2 \i\gb \xi\cdot\varepsilon(\b u)_2 &  -\mu \xi_1\xi_2 & -\mu\xi_2^2 -\mu|\gb \xi|^2  & -\mu \xi_2\xi_3 \\
		\i\xi_3  & 2 \i\gb \xi\cdot\varepsilon(\b u)_3 &  -\mu \xi_1\xi_3 & -\mu \xi_2\xi_3 & -\mu\xi_3^2 -\mu|\gb \xi|^2 \\  
		0&0&1&0&0\\
		0&0&0&1&0\\
		0&0&0&0&1
	\end{pmatrix} 
\end{multline}
for $\gb \xi\in\mathbb{R}^3$.

\item 
		For every point $\b x$ consider $\mathcal L=\mathcal L_{p\mu\rho}=\mathcal L_{p\mu}$ in the quasi-static case \eqref{eq:QuasiStatic}. Then $\mathcal L$ is not overdetermined elliptic at $\b x$.

	\end{enumerate}
\end{prop}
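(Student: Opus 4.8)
The plan is to compute the principal symbols explicitly from the Douglis-Nirenberg numbers assigned just before the proposition, and then exhibit, in each case, a non-zero real covector $\gb\xi$ for which the rank of $\mathcal L_0$ drops below the number of columns $m$. For part a), I would read off the orders of the entries of $\mathcal L_{p\mu\rho}$ in \eqref{eq:LPMuRho}: with $(s_i) = (0,0,0,-2,-2,-2)$ and $(t_j) = (1,1,0,2,2,2)$, the $(i,j)$ entry of the principal symbol keeps only the terms of order exactly $s_i+t_j$. For the first three rows ($s_i = 0$): the $\delta p$ column ($t_1 = 1$) contributes $\partial_{x_i}$, i.e.\ $\i\xi_i$; the $\delta\mu$ column ($t_2 = 1$) contributes the first-order part $2\,\gb\xi_s\cdot\varepsilon(\b u)$ of $2\nabla\cdot(\delta\mu\,\varepsilon(\b u))$ with $\i$; the $\delta\rho$ column ($t_3 = 0$) contributes the zeroth-order multiplier $-(\b u)_{tt}$; and the three $\delta\b u$ columns ($t_j = 2$) contribute the second-order part of $2\nabla\cdot(\mu\,\varepsilon(\delta\b u)) - \rho(\delta\b u)_{tt}$, which in Fourier variables is $-\mu\xi_i\xi_j - \delta_{ij}\mu|\gb\xi|^2 + \delta_{ij}\rho\xi_4^2$ (using $t$ as the fourth coordinate). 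Rows 4–6 are the trivial $\delta\b u = \delta\b K$ equations with $s_i = -2$, $t_j = 2$, so those entries are the constant identity block. This reproduces \eqref{eq:princSymb}, and the quasi-static version \eqref{eq:princSymbSpat} in part c) follows by deleting the $\delta\rho$ column and the $\rho\xi_4^2$, $(\b u)_{tt}$ terms (so $\gb\xi\in\mathbb{R}^3$).

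For parts b) and d), the point is that $m = 6$ (resp.\ $m = 5$) but the last three identity rows already pin down the three $\delta\b u$ components, so $\rank\mathcal L_0 = m$ would require the $3\times 3$ block formed by rows 1–3 and the first three columns (the $\delta p,\delta\mu,\delta\rho$ columns, resp.\ $\delta p,\delta\mu$ columns) to have full column rank after eliminating $\delta\b u$ — i.e.\ the $3\times 3$ matrix
\[
N(\gb\xi) = \bigl(\i\gb\xi_s \mid 2\i\,\gb\xi_s\cdot\varepsilon(\b u) \mid -(\b u)_{tt}\bigr)
\]
must be invertible for every $0\ne\gb\xi\in\mathbb{R}^4$. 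I would simply choose $\gb\xi$ with $\gb\xi_s = 0$ and $\xi_4 \ne 0$: then the first two columns of $N$ vanish identically, so $\rank N(\gb\xi) \le 1 < 3$, hence $\rank\mathcal L_0((\b x,t),\i\gb\xi) \le 4 < 6$, and $\mathcal L_{p\mu\rho}$ fails overdetermined ellipticity at every $(\b x,t)$. For part d), in the quasi-static case there is no fourth coordinate, so this trick is unavailable, but now the relevant block is the $3\times 2$ matrix $\bigl(\i\gb\xi \mid 2\i\,\gb\xi\cdot\varepsilon(\b u)\bigr)$ with $\gb\xi\in\mathbb{R}^3$: both columns are scalar multiples of vectors formed from $\gb\xi$ and $\varepsilon(\b u)\gb\xi$. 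Full column rank $m = 5$ needs these two columns (and the identity block) to be independent; but the first column is $\i\gb\xi$ and — more to the point — one can pick $\gb\xi$ in the kernel of $\varepsilon(\b u)(\b x)$ when that symmetric matrix is singular, or more robustly note that the two columns $\gb\xi$ and $\varepsilon(\b u)\gb\xi$ are linearly dependent exactly when $\gb\xi$ is an eigenvector of $\varepsilon(\b u)(\b x)$, and a real symmetric $3\times3$ matrix always has a real eigenvector, so such a $\gb\xi\ne 0$ exists at every $\b x$. Then $\rank\mathcal L_0(\b x,\i\gb\xi) \le 1 + 3 = 4 < 5$.

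The main obstacle is really just bookkeeping rather than a genuine difficulty: one must be careful that the principal-symbol entries are taken at exactly order $s_i + t_j$ and not lower (e.g.\ that the $-\rho(\delta\b u)_{tt}$ term genuinely survives into the second-order block as $\rho\xi_4^2$ in the dynamic case, and drops out in the quasi-static case), and that $m$ is counted correctly as the number of \emph{unknowns} (6 in the $p\mu\rho$ case, 5 once $\rho$ is removed). A secondary subtlety worth a sentence is the eigenvector argument for part d): I should state explicitly that every real symmetric matrix has a real eigenvector, so that for every $\b x$ one genuinely obtains a real characteristic direction $\gb\xi\ne0$, making the failure of ellipticity pointwise and not merely generic. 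Everything else is the direct matrix computation already displayed in the statement, and the rank drop is immediate once the chosen $\gb\xi$ is substituted.
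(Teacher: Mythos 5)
Your proposal is correct, and for parts a), c) and d) it follows essentially the same route as the paper: the same order bookkeeping with the Douglis--Nirenberg numbers for the symbol computation, and the same reduction via the identity block to the question of whether the leading $3\times k$ parameter block has full column rank, settled in d) by the existence of a real eigenvector of the symmetric matrix $\varepsilon(\b u)$.

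The one genuine divergence is in part b). The paper reuses the eigenvector argument there: it takes $\gb\xi_s$ to be an eigenvector of $\varepsilon(\b u)$, so that the $\delta p$ column $\i\gb\xi_s$ and the $\delta\mu$ column $2\i\,\varepsilon(\b u)\gb\xi_s$ become proportional, with $\xi_4$ playing no role. You instead take $\gb\xi_s=0$, $\xi_4\neq 0$, which kills the $\delta p$ and $\delta\mu$ columns outright. Both choices are legitimate characteristic directions and each suffices for the stated conclusion. Your choice is shorter, but it only detects the unsurprising fact that purely time-like covectors are characteristic for a hyperbolic-in-time system; the paper's choice exhibits a \emph{spatial} characteristic direction coming from the $\delta p$--$\delta\mu$ coupling, which is the degeneracy that survives in the quasi-static case and is what motivates the subsequent passage to the single-parameter linearizations $\mathcal L_p$, $\mathcal L_\mu$, $\mathcal L_\rho$. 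Since you prove d) anyway, nothing is lost logically, but the paper's uniform argument makes the source of the obstruction more transparent. One last small point in your favour: you are right to flag the ambiguity of $|\gb\xi|^2$ (spatial versus full four-vector norm) in \eqref{eq:princSymb}; your argument for b) is insensitive to this, since the identity block keeps the $\delta\b u$ columns independent regardless.
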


\begin{proof}
a) To compute the principal symbol \eqref{eq:princSymb}, we refer to the definition of $\mathcal L_{p\mu\rho}$ in \eqref{eq:LPMuRho}.

We write the first three equations in $\mathcal L_{p\mu\rho}(\delta p,\delta\mu,\delta\rho,\delta\b u)=\mathcal S$ as
\begin{equation}
\label{eq:ReprGl}
	\partial_i \delta p+2\;\nabla\cdot(\delta\mu\;\varepsilon(\b u)_i)+2\nabla\cdot(\mu\;\varepsilon(\delta\b u)_i)-\delta\rho(u_i)_{tt}-\rho(\delta u_i)_{tt}=0,\qquad i=1,2,3
\end{equation}
where we denote the columns of the (symmetric) strain as
\[
	\varepsilon (\b u)=\frac{1}{2}(\nabla \b u +\nabla \b u^\top)=(\varepsilon(\b u)_1,\varepsilon(\b u)_2,\varepsilon(\b u)_3).
\]
The meaning of $\varepsilon(\delta\b u)_i$ is analogous.

The last three equations in $\mathcal L_{p\mu\rho}(\delta p,\delta\mu,\delta\rho,\delta\b u)=\mathcal S$ are written as
\begin{equation}
\label{eq:ReprGlInt}
	\delta u_i = \delta K_i\qquad i=1,2,3
\end{equation}

We now want to determine the entries $L_{0,ij}$ of the principal symbol $\mathcal L_0((\b x,t),\i\gb\xi)$. Note that each of the columns in $\mathcal L_0$ exactly corresponds to one of the variables $(\delta p,\delta\mu,\delta \rho,\delta u_1,\delta u_2,\delta u_3)$. Starting with $j=0$, we go through the variable list until $j=6$. For each variable, we determine the term where the unknown appears in the equations \eqref{eq:ReprGl} resp. in \eqref{eq:ReprGlInt}. Then we choose that component of the term which has order $t_j+s_i$. Substituting $\i\gb\xi$ for $\frac{\partial}{\partial(\b x,t)}$ in that component gives the corresponding entries $(L_{0,ij})_{i=1}^6$ in the $j$-th column of $\mathcal L_0$. 

For the first column corresponding to $\delta p$, the term $\partial_i \delta p$ in \eqref{eq:ReprGl} is translated to $(\i\xi_1,\i\xi_2,\i\xi_3)$ in $(L_{0,i1})_{i=1}^3$. The second column corresponding to $\delta \mu$, and the summand of highest order in the term $2\;\nabla\cdot(\delta\mu \;\varepsilon(\b u)_i)$ translates to $2\;\gb\xi_s\cdot \varepsilon(\b u)_i$ in $(L_{0,i2})_{i=1}^3$ with $\gb\xi_s=(\xi_1,\xi_2,\xi_3)$. In the third column corresponding to $\delta\rho$, no differentiation occurs, so we just have $-(u_i)_{tt}$ as entries in $(L_{0,i3})_{i=1}^3$.

The last three columns correspond to the variables $(\delta u_1,\delta u_2,\delta u_3)$. The relevant terms in \eqref{eq:ReprGl} are 
\begin{equation}
\label{eq:SubTerm}
	2\;\nabla\cdot(\mu\;\varepsilon(\delta\b u)_i)-\rho(\delta u_i)_{tt}=
	\nabla\cdot(\mu
	\begin{pmatrix}
		\partial_1 \delta u_i\\
		\partial_2 \delta u_i\\
		\partial_3 \delta u_i
	 \end{pmatrix})
	 + \nabla\cdot(\mu
	 \begin{pmatrix}
		 \partial_i \delta u_1\\
		 \partial_i \delta u_2 \\
		 \partial_i \delta u_3
	 \end{pmatrix})-\rho(\delta u_i)_{tt}.
\end{equation}
We substitute $i\gb\xi$ for differentiation in \eqref{eq:SubTerm} and take the terms of highest order to find the entries of the columns $(L_0)_{i,j},j=4,5,6$; these are the terms $-\mu\xi_i^2-\mu|\gb \xi|^2+\rho\xi_4^2$ in $(L_0)_{i,i+3}, i=1,2,3$, and the term $-\mu\xi_i\xi_j$ in the entries $(L_0)_{i,j},j=4,5,6,j\neq i$. -- The last three equations \eqref{eq:ReprGlInt} contain no derivatives and give rise to the identity matrix in the entries $(L_0)_{ij},4\leq i,j\leq 6$ of the principal symbol.

\medskip
c) The calculations for the symbol \eqref{eq:princSymbSpat} in the spatial case are exactly the same as for the spatio-temporal case. The only change in this case is that there is no temporal derivative. Therefore there is no variable $\delta\rho$, and one column less than in \eqref{eq:princSymb}, and $\xi_4$ can be set to zero everywhere.

\medskip
b) and d)
We first observe that both in \eqref{eq:princSymb} as well in \eqref{eq:princSymbSpat}, three of the columns are clearly linearly independent. The first two columns, though, can be linearly dependent:

Consider the symmetric strain $\varepsilon=\varepsilon^\top$. As the entries are real, there always exists an eigenvector $\b v$ such that 
\[
\varepsilon*\b v = \begin{pmatrix}\varepsilon_1\cdot\b v,\varepsilon_2\cdot\b v , \varepsilon_3\cdot\b v\end{pmatrix}=\kappa\b v,
\]
where $*$ denotes matrix multiplication. Choosing $(\xi_1,\xi_2,\xi_3)=\b v$ gives linear dependence of $\mathcal L_0(\i\gb\xi)$ in the first two columns.

In conclusion, at each point $(\b x,t)$ resp. $\b x$, there are  choices of $\gb \xi$ such that the symbols in \eqref{eq:princSymb} resp. \eqref{eq:princSymbSpat} do not have full rank. Therefore $\mathcal L_{p\mu\rho}$ is not overdetermined elliptic.
\end{proof}

We now restrict the focus on linearizations in only one direction, which were introduced in Section~\ref{sec:Setting}. Then the corresponding principal symbol contains fewer columns and results on ellipticity can be obtained.
\begin{cor}
\label{cor:AL}
	The operator $\mathcal L_p$ in \eqref{eq:LP}, considered in the stationary case, is elliptic everywhere.
\end{cor}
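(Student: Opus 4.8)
The plan is to read off the principal symbol of $\mathcal L_p$ for the Douglis--Nirenberg numbers already fixed in Section~\ref{sec:Ell}, namely $(t_j)_{j=1}^4=(1,2,2,2)$ for the variables $(\delta p,\delta u_1,\delta u_2,\delta u_3)$ and $(s_i)_{i=1}^6=(0,0,0,-2,-2,-2)$ for the six equations, and then to verify the rank condition \eqref{eq:OvDetEll} by inspection. In the stationary case \eqref{eq:QuasiStatic} the term $-\rho(\delta\mathbf{u})_{tt}$ in \eqref{eq:LP} drops out, and the principal symbol of $\mathcal L_p$ is obtained from the quasi-static symbol \eqref{eq:princSymbSpat} of $\mathcal L_{p\mu}$ by deleting the column belonging to $\delta\mu$; this yields the $6\times 4$ matrix
\[
\mathcal L_{p,0}(\mathbf{x},\i\gb\xi)=
\begin{pmatrix}
\i\xi_1 & -\mu\xi_1^2-\mu|\gb\xi|^2 & -\mu\xi_1\xi_2 & -\mu\xi_1\xi_3 \\
\i\xi_2 & -\mu\xi_1\xi_2 & -\mu\xi_2^2-\mu|\gb\xi|^2 & -\mu\xi_2\xi_3 \\
\i\xi_3 & -\mu\xi_1\xi_3 & -\mu\xi_2\xi_3 & -\mu\xi_3^2-\mu|\gb\xi|^2 \\
0 & 1 & 0 & 0 \\
0 & 0 & 1 & 0 \\
0 & 0 & 0 & 1
\end{pmatrix},\qquad \gb\xi\in\mathbb{R}^3.
\]

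Next I would show that the four columns $c_1,\dots,c_4$ of $\mathcal L_{p,0}(\mathbf{x},\i\gb\xi)$ are linearly independent for every $\mathbf{x}$ and every real $\gb\xi\neq 0$. If $a_1c_1+a_2c_2+a_3c_3+a_4c_4=0$, then reading off rows $4,5,6$ — where the last three columns form the identity block and the $\delta p$-column vanishes — forces $a_2=a_3=a_4=0$; the remaining relation $a_1c_1=0$ together with $c_1=(\i\xi_1,\i\xi_2,\i\xi_3,0,0,0)^\top\neq 0$ (since $\gb\xi\neq 0$) gives $a_1=0$. Hence $\rank\mathcal L_{p,0}(\mathbf{x},\i\gb\xi)=4=m$, and by Definition~\ref{def:Ell} the operator $\mathcal L_p$ is overdetermined elliptic at every point, as claimed.

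The argument is essentially immediate once the symbol is written down: the interior-data equations $\delta\mathbf{u}=\delta\mathbf{K}$ contribute the identity block in the last three columns, so those columns are automatically independent, and the gradient symbol $\i(\xi_1,\xi_2,\xi_3)$ in the first column — which never vanishes on $\gb\xi\neq 0$ — supplies the fourth. The only place that needs a little care is the Douglis--Nirenberg bookkeeping: one should check that with the chosen $s_i,t_j$ the entries in rows $4,5,6$ have order exactly $s_i+t_j=0$ (contributing the identity block) while the $\delta p$-entry in those rows has negative order $s_i+t_1=-1$ and is therefore zero, and that rows $1,2,3$ have the stated second-order (resp.\ first-order for $\delta p$) principal parts — but this is already contained in the proof of Proposition~\ref{prop:Ellip}. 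There is no genuine obstacle; the point of the corollary is precisely that deleting the $\delta\mu$-column removes the only source of rank loss identified in parts~b)/d) of that proposition.
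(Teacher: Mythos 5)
Your proof is correct and follows essentially the same route as the paper: extract the principal symbol of $\mathcal L_p$ as the first and last three columns of \eqref{eq:princSymbSpat}, observe that the identity block from the interior-data equations makes the $\delta\b u$-columns independent, and that the non-vanishing gradient symbol $\i(\xi_1,\xi_2,\xi_3)$ in the $\delta p$-column supplies the fourth independent column, so the rank is $4=m$ for all $\gb\xi\neq 0$. The paper simply asserts "maximal rank 4" where you spell out the linear-independence argument and the Douglis--Nirenberg bookkeeping; no substantive difference.
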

\begin{proof}
Let $\mathcal L=\mathcal L_p$.	The principal symbol consists of the first and the three last columns of the matrix in \eqref{eq:princSymbSpat}:
\begin{equation}
\label{eq:princLP}
\mathcal L_0(\b x,\i\gb \xi)=
\begin{pmatrix}
\i\xi_1  &\phantom{78}&   \\
\i\xi_2   &\phantom{78}*&  \\
\i\xi_3    &\phantom{78}& \\  
 &\b{Id_{3x3}}
\end{pmatrix}.
\end{equation}	
	For $\gb\xi\neq 0$, this symbol has maximal rank 4 everywhere, therefore $\mathcal L_p$ is elliptic.
\end{proof}

\begin{cor}
\label{cor:AM}
	The operator $\mathcal L_\mu$ in \eqref{eq:LMu}, considered in the stationary case, is elliptic exactly at points which satisfy
	\begin{equation}
	\label{eq:NonSingStrain}
		\det(\varepsilon(\b u(\b x,t))\neq 0.
	\end{equation}
	The operator $\mathcal L_\mu^s$, corresponding to $s$ measurements $(\delta\b u)_k, 1\leq k\leq s$, is elliptic exactly at points where at least one of $\det(\varepsilon(\b u_k(\b x,t))\neq 0$.
\end{cor}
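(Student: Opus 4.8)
The plan is to set $\mathcal{L}=\mathcal{L}_\mu$, read off its principal symbol $\mathcal{L}_0$ from Proposition~\ref{prop:Ellip}(c), and then decide pointwise for which $\b x$ the rank in \eqref{eq:OvDetEll} is maximal for every real $\gb\xi\neq\b 0$. Since the unknowns of $\mathcal{L}_\mu$ are $(\delta\mu,\delta u_1,\delta u_2,\delta u_3)$, the symbol $\mathcal{L}_0(\b x,\i\gb\xi)$ is the $6\times4$ matrix obtained from the quasi-static symbol \eqref{eq:princSymbSpat} by deleting the column belonging to $\delta p$; it has the block form
\[
\mathcal{L}_0(\b x,\i\gb\xi)=\begin{pmatrix}\b a & B\\ \b 0 & \b{Id}_{3\times3}\end{pmatrix},\qquad \b a=\bigl(2\i\,\gb\xi\cdot\varepsilon(\b u)_i\bigr)_{i=1}^{3}=2\i\,\varepsilon(\b u)\,\gb\xi,
\]
where $B$ is the $3\times3$ block coming from $2\nabla\cdot(\mu\,\varepsilon(\delta\b u))$ and $\b a$ is $2\i$ times the matrix--vector product of the symmetric strain $\varepsilon(\b u(\b x))$ with $\gb\xi$. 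The precise form of $B$ will be irrelevant.

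The key step is to exploit the identity block in the bottom three rows: subtracting $B$ times those rows from the first three rows is a rank-preserving row operation that replaces the top-right block $B$ by $\b 0$, so that the reduced matrix has $\b a$ as its only column possibly lying outside the $\b{Id}_{3\times3}$ block. Hence $\rank\mathcal{L}_0(\b x,\i\gb\xi)=4$ if and only if $\b a\neq\b 0$, i.e.\ if and only if $\varepsilon(\b u(\b x))\gb\xi\neq\b 0$; the rank falls to $3$ exactly when $\gb\xi\in\ker\varepsilon(\b u(\b x))$. Therefore $\mathcal{L}_\mu$ is overdetermined elliptic at $\b x$ --- rank $4$ for every real $\gb\xi\neq\b 0$ --- precisely when $\ker\varepsilon(\b u(\b x))=\{\b 0\}$, that is, when $\det\varepsilon(\b u(\b x))\neq0$, which is \eqref{eq:NonSingStrain}. (Note that $\mu$ never enters the criterion; only the strain field does. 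The restriction to the stationary case is essential: in the dynamic case the choice $\gb\xi=(0,0,0,\xi_4)$ with $\xi_4\neq0$ already annihilates the $\delta\mu$-column, so $\mathcal{L}_\mu$ would be elliptic nowhere.)

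For the $s$-measurement operator $\mathcal{L}_\mu^s$, with unknowns $(\delta\mu,(\delta\b u)_1,\dots,(\delta\b u)_s)$ and hence $m=3s+1$, the principal symbol splits into $s$ diagonal blocks of the type just analysed, one per experiment, which all share the single $\delta\mu$-column; the three entries of that column in the rows belonging to experiment $k$ are $2\i\,\varepsilon(\b u_k(\b x))\gb\xi$. Running the same row reduction inside each diagonal block yields a matrix whose restriction to the $\delta\b u$-columns is block-diagonal with $s$ identity blocks, while the $\delta\mu$-column consists of the $s$ sub-blocks $2\i\,\varepsilon(\b u_k(\b x))\gb\xi$; its rank is therefore $3s+1$ if and only if this column is nonzero, i.e.\ if and only if at least one $\varepsilon(\b u_k(\b x))\gb\xi\neq\b 0$. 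Thus $\mathcal{L}_\mu^s$ is elliptic at $\b x$ exactly when $\bigcap_{k=1}^{s}\ker\varepsilon(\b u_k(\b x))=\{\b 0\}$; this holds in particular whenever some $\det\varepsilon(\b u_k(\b x))\neq0$, and for $s=1$ it reduces to \eqref{eq:NonSingStrain}.

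There is no genuine analytic obstacle: once Proposition~\ref{prop:Ellip} supplies the symbol, everything is elementary linear algebra, and the identity-block trick does all the work. The only points needing care are (i) identifying correctly which columns of \eqref{eq:princSymbSpat} survive in $\mathcal{L}_\mu$ resp.\ $\mathcal{L}_\mu^s$ and organising the block structure of the multi-measurement symbol, and (ii) phrasing the rank count as ``the symbol has full rank for \emph{every} real $\gb\xi\neq\b 0$'', so that the criterion becomes non-degeneracy of $\varepsilon(\b u)$ (resp.\ triviality of the common kernel) rather than a condition that merely holds for generic $\gb\xi$.
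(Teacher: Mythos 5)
Your argument for $s=1$ is correct and is essentially the paper's proof: extract the $6\times4$ symbol from \eqref{eq:princSymbSpat}, observe that the identity block makes the three $\delta\b u$-columns independent and forces the coefficients multiplying them to vanish in any linear relation, so full rank for all real $\gb\xi\neq\b 0$ is equivalent to $\varepsilon(\b u(\b x))\gb\xi\neq\b 0$ for all $\gb\xi\neq\b 0$, i.e.\ to \eqref{eq:NonSingStrain}. (The factor $2$ and the block $B$ are indeed irrelevant, as you say.)

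For the multi-measurement case the paper argues by induction on $s$, adding three columns and six rows per experiment and only checking that the stated condition is \emph{sufficient} for full rank; your direct block-diagonal reduction is a genuinely cleaner route and yields the sharper, genuinely ``if and only if'' criterion $\bigcap_{k=1}^{s}\ker\varepsilon(\b u_k(\b x))=\{\b 0\}$. This is worth keeping: it shows that the word ``exactly'' in the corollary's second sentence is, strictly speaking, only justified in the direction you prove (the stated determinant condition implies ellipticity, but ellipticity of $\mathcal L_\mu^s$ does not force any single $\det\varepsilon(\b u_k)$ to be nonzero), whereas your common-kernel condition is the precise characterization. Your parenthetical remark about the dynamic case (taking $\gb\xi=(0,0,0,\xi_4)$ kills the $\delta\mu$-column) is also correct and explains why the stationary restriction appears in the statement.
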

\begin{proof}
Let $\mathcal L = \mathcal L_\mu$ (the case of one measurement). Then the principal symbol consists of the second and the three last columns of the matrix in \eqref{eq:princSymbSpat}:
\begin{equation}
\label{eq:princLMu}
\mathcal L_0(\b x,\i\gb \xi)=
\begin{pmatrix}
\i\gb \xi_s\cdot\varepsilon(\b u)_1  &\phantom{78}&   \\
\i\gb \xi_s\cdot\varepsilon(\b u)_2   &\phantom{78}*&  \\
\i\gb \xi_s\cdot\varepsilon(\b u)_3   &\phantom{78}& \\  
 &\b{Id_{3x3}}
\end{pmatrix}
\end{equation}		
	This symbol has rank 4 provided that the first column is non-degenerate, which is equivalent to the condition  \eqref{eq:NonSingStrain} of non-singular strain.
	
Now let $\mathcal L=\mathcal L_\mu^{(s)}$ (the case of multiple measurements). This can be treated by induction on $s$. Let the principal symbol $\mathcal L_0^{(s-1)}$ corresponding to $s-1$ measurements have dimension $a\times b$. Adding one measurement means adding three more columns (and six new lines) in the matrix, such that it has dimension $(a+6)\times (b+3)$. These three new columns are independent because of the identity component in $(L_{0,ij}), a+4\leq i\leq a+6,b+1\leq j\leq b+3$.

In the first column of $\mathcal L_0^s$, there are three new entries $2\i\gb\xi_s\cdot\varepsilon(\b u_s)_{i=1}^3$ at position $(L_{0,i1})_{i=3s-2}^{3s}$. If we have that for $1\leq k\leq s$, at least one of $\det(\varepsilon(\b u_k(\b x,t))\neq 0$ is non-zero, then the principal symbol $\mathcal L_0^s$ has full rank: in the case $k<s$ because of the induction assumption, and in the case $k=s$ because of non-degeneracy in the first column due to the new measurement.
\end{proof}

\begin{cor}
\label{cor:ARho}
	The operator $\mathcal L_\rho$ in \eqref{eq:LRho} is elliptic exactly at points with $\b u_{tt}\neq 0$. For the case of several measurements, $\mathcal L_\rho^{(s)}$ is elliptic exactly at points $(\b x,t)$ where at least one of $(\b u_k)_{tt}(\b x,t)\neq 0$, $1\leq k\leq s$.
\end{cor}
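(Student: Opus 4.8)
The plan is to follow the template of the proofs of Corollaries~\ref{cor:AL} and~\ref{cor:AM}: read off the principal symbol of $\mathcal L_\rho$ as a submatrix of \eqref{eq:princSymb}, and then determine for which $(\b x,t)$ this matrix has maximal rank for every real $\gb\xi\neq 0$. Since $\delta\rho$ enters the linearized equation \eqref{eq:LRho} only through the undifferentiated term $-\delta\rho\,\b u_{tt}$, this is genuinely a computation in the \emph{dynamic} case, and the relevant Douglis--Nirenberg numbers are $(t_j)=(0,2,2,2)$ for the variables $(\delta\rho,\delta u_1,\delta u_2,\delta u_3)$ together with $(s_i)_{i=1}^6=(0,0,0,-2,-2,-2)$. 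With this choice the principal symbol is obtained from \eqref{eq:princSymb} by deleting the two columns belonging to $\delta p$ and $\delta\mu$, i.e.
\[
	\mathcal L_0((\b x,t),\i\gb \xi)=
	\begin{pmatrix}
		-(u_1)_{tt}  &\phantom{78}&   \\
		-(u_2)_{tt}  &\phantom{78}*&  \\
		-(u_3)_{tt}  &\phantom{78}& \\
		 &\b{Id_{3x3}}
	\end{pmatrix},
\]
where $*$ denotes the same $3\times 3$ block as in \eqref{eq:princSymb}, with diagonal entries $-\mu\xi_i^2-\mu|\gb\xi|^2+\rho\xi_4^2$ and off-diagonal entries $-\mu\xi_i\xi_j$.

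Next I would carry out the rank count. The three columns forming the right-hand block are linearly independent for every $\gb\xi$ because of the $\b{Id_{3x3}}$ entries in rows $4,5,6$. Any linear combination of these three columns that agrees with the first column in rows $4,5,6$ (where the first column vanishes) must be the trivial combination; hence the first column is independent of the remaining three precisely when its restriction to rows $1,2,3$, namely $-\b u_{tt}(\b x,t)$, is nonzero. Since this condition does not involve $\gb\xi$, the matrix $\mathcal L_0((\b x,t),\i\gb\xi)$ has full rank $4$ for all real $\gb\xi\neq 0$ if and only if $\b u_{tt}(\b x,t)\neq 0$, which is the asserted characterization for a single measurement.

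For $\mathcal L_\rho^{(s)}$ I would argue by induction on $s$, exactly as in the proof of Corollary~\ref{cor:AM}. Passing from $s-1$ to $s$ measurements appends three columns (for the new increment $\delta\b u_s$) and six rows; the three new columns are independent thanks to their identity block, while the single $\delta\rho$-column acquires three additional entries $-(u_s)_{tt}$ in the rows belonging to the $s$-th experiment. If $(\b u_k)_{tt}\neq 0$ for some $k<s$, the enlarged symbol already has full column rank by the induction hypothesis; if $(\b u_s)_{tt}\neq 0$, the new entries render the $\delta\rho$-column independent of the others by the same zero-pattern argument as above. Conversely, if $(\b u_k)_{tt}=0$ for every $k$, the $\delta\rho$-column is identically zero and the rank cannot reach $1+3s$, so $\mathcal L_\rho^{(s)}$ is not elliptic there. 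No genuine analytic difficulty arises; the only point demanding care — the "main obstacle" in this routine argument — is the bookkeeping: tracking which block of the $6s$ rows corresponds to which experiment while keeping $\delta\rho$ a single shared unknown, so that the symbol is a full $(6s)\times(1+3s)$ matrix and not a block-diagonal one.
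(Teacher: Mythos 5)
Your argument is correct and follows essentially the same route as the paper: read off the principal symbol of $\mathcal L_\rho$ as the last four columns of \eqref{eq:princSymb}, observe that the identity block forces the rank condition to reduce to non-vanishing of the $\delta\rho$-column, i.e.\ of $\b u_{tt}$, and handle several measurements by the same induction as for $\mathcal L_\mu^{(s)}$. You merely spell out the rank count and the converse direction a bit more explicitly than the paper does.
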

\begin{proof}
Let $\mathcal L=\mathcal L_\rho$.	The principal symbol consists of the last four columns of the matrix in \eqref{eq:princSymb}:
\begin{equation}
\label{eq:princLRho}
\mathcal L_0((\b x,t),\i\gb \xi)=
\begin{pmatrix}
-(u_1)_{tt}  &\phantom{78}&   \\
-(u_2)_{tt}   &\phantom{78}*&  \\
-(u_3)_{tt}   &\phantom{78}& \\  
 &\b{Id_{3x3}}
\end{pmatrix}
\end{equation}	
	 This symbol has rank 4 iff $\b u_{tt}$ is nonzero. The statement for multiple measurements is proved by induction, analogous to the case of $\mathcal L_\mu^s$.
\end{proof}

\begin{cor}
\label{cor:ApMu}
	The operator $\mathcal L_{p\mu}$ is not elliptic at points $(\b x,t)\in\Omega\times T$ in the spatio-temporal case, resp. points $\b x\in\Omega$ in the spatial case.
\end{cor}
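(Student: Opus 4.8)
The plan is to reduce the statement directly to the computation already carried out in the proof of Proposition~\ref{prop:Ellip}. First I would record the principal symbol of $\mathcal L_{p\mu}$. Since $\mathcal L_{p\mu}$ arises from $\mathcal L_{p\mu\rho}$ simply by setting $\delta\rho=0$ (see \eqref{eq:LPMu}), the column-by-column procedure used to build \eqref{eq:princSymb} resp.\ \eqref{eq:princSymbSpat} produces, for $\mathcal L_{p\mu}$, exactly the matrix \eqref{eq:princSymb} (dynamic case) resp.\ \eqref{eq:princSymbSpat} (quasi-static case) with the column corresponding to $\delta\rho$ removed; the remaining columns, indexed by $(\delta p,\delta\mu,\delta u_1,\delta u_2,\delta u_3)$, are unchanged. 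So $\mathcal L_0(\b x,\I\gb\xi)$ (resp.\ $\mathcal L_0((\b x,t),\I\gb\xi)$) is a $6\times 5$ matrix, and overdetermined ellipticity in the sense of Definition~\ref{def:Ell} would demand $\rank\mathcal L_0=5$ for every real $\gb\xi\ne 0$.

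Next I would exhibit, at an arbitrary but fixed point, a real $\gb\xi\ne 0$ making the first two columns of this $6\times 5$ matrix linearly dependent. As in the proof of Proposition~\ref{prop:Ellip}(b),(d), use that $\varepsilon(\b u)$ is a real symmetric $3\times 3$ matrix, hence has a real eigenvector $\b v\ne 0$ with real eigenvalue $\kappa$. Choosing $\gb\xi_s=(\xi_1,\xi_2,\xi_3)=\b v$ (and, in the dynamic case, $\xi_4$ arbitrary, e.g.\ $\xi_4=0$) makes the $\delta p$-column equal to $\I(v_1,v_2,v_3,0,0,0)^\top$ and the $\delta\mu$-column equal to $2\I\bigl(\varepsilon(\b u)\b v\bigr)=2\I\kappa(v_1,v_2,v_3)$ in its first three entries and $0$ in its last three. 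Hence the $\delta\mu$-column equals $2\kappa$ times the $\delta p$-column; since the $\delta p$-column is nonzero (because $\b v\ne 0$), the two columns are linearly dependent. Therefore $\rank\mathcal L_0\le 4<5$, the condition \eqref{eq:OvDetEll} fails at the chosen point, and since the point was arbitrary the corollary follows in both the spatio-temporal and the spatial settings.

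There is essentially no obstacle here beyond bookkeeping: deleting the $\delta\rho$-column leaves the $\delta p$- and $\delta\mu$-columns — on which the degeneracy argument of Proposition~\ref{prop:Ellip} rests — untouched, so the same eigenvector choice works verbatim. The only minor points to note are the degenerate case $\kappa=0$, in which the $\delta\mu$-column vanishes outright and the $6\times 5$ symbol is again rank-deficient, and the requirement $\gb\xi\ne 0$, which is automatic since an eigenvector is by definition nonzero.
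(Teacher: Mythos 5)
Your proof is correct and follows essentially the same route as the paper: restrict the principal symbol of $\mathcal L_{p\mu}$ to the $\delta p$- and $\delta\mu$-columns (plus the identity block), choose $\gb\xi_s$ to be a real eigenvector of the symmetric matrix $\varepsilon(\b u)$, and observe that the two columns become proportional, so the rank drops below $5$. Your extra remarks on the $\kappa=0$ case and on $\xi_4$ being irrelevant are correct bookkeeping that the paper leaves implicit.
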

\begin{proof}
Let $\mathcal L=\mathcal L_{p\mu}$ in the dynamic case.	The principal symbol consists of the two first and the three last columns of the matrix in \eqref{eq:princSymb}:
\begin{equation}
\label{eq:princLPMu}
\mathcal L_0((\b x,t),\i\gb \xi)=
\begin{pmatrix}
\i\xi_1 &\i\gb \xi_s\cdot\varepsilon(\b u)_1  &\phantom{78}&   \\
\i\xi_2 &\i\gb \xi_s\cdot\varepsilon(\b u)_2   &\phantom{78}*&  \\
\i\xi_3 &\i\gb \xi_s\cdot\varepsilon(\b u)_3   &\phantom{78}& \\  
 &&\b{Id_{3x3}}
\end{pmatrix}
\end{equation}	
	
Fix a point $(\b x,t)$. As in the proof of Proposition~\ref{prop:APAMu}c), choose an eigenvector $\b v$ of $\varepsilon$ and let $\gb\xi$ such that $(\xi_1,\xi_2,\xi_3)=\b v$. With this choice of $\gb\xi$, the principal symbol \eqref{eq:princLPMu} has two columns which are linearly dependent. Therefore, the operator $\mathcal L_{p\mu}$ is not overdetermined elliptic.
\end{proof}

\begin{remark}
\label{rem:PL}
As starting point of our analysis, we have choosen the modified model \eqref{eq:ModModel} with the substitution $p=\lambda\nabla\cdot\b u$ in \eqref{eq:Pressure}. Of course, we could also analyze the system corresponding to the original model~\eqref{eq:Modell}. Using linearization in direction of $\delta\lambda$, we have the operator $\mathcal L_{\lambda}$ in \eqref{eq:LLambda}.

Now let $\mathcal L=\mathcal L_\lambda$ in the quasi-static case. Doing calculations as in Proposition~\ref{prop:Ellip}, and restricting the focus on the first and last three columns as in Corollary~\ref{cor:AL}, one finds the resulting principal symbol as 

\begin{equation}
\label{eq:SymbL}
	\mathcal L_0(\b x,\i\gb \xi)=
	\begin{pmatrix}
	\i\xi_1\nabla\cdot\b u   &\phantom{78}&   \\
	\i\xi_2\nabla\cdot\b u    &\phantom{78}*&  \\
	\i\xi_3\nabla\cdot\b u   &\phantom{78}& \\  
 		&\b{Id_{3x3}}
	\end{pmatrix}.
\end{equation}
In this case, ellipticity is harder to obtain, since it is tied up with non-vanishing of $\nabla\cdot\b u$. This actually may be a condition which is harder to guarantee in experiments, see the discussion in Section~\ref{sec:Disc}.

\end{remark}
\subsection{Lopatinskii condition}
\label{sec:Lopat}
We want to use the stability criterion in Theorem \ref{thm:Sol} for the systems $\mathcal A_p$, $\mathcal A_\mu$ and $\mathcal A_\rho$ in \eqref{eq:SystemP}, \eqref{eq:SystemMu}, \eqref{eq:SystemRho}. For this purpose, we are checking the covering condition in Definition \ref{def:Lopat} for the various differential operators $\mathcal L$ introduced in Section~\ref{sec:Setting}, together with the relevant boundary data where necessary.

\begin{prop}
\label{prop:APAMu}
	The systems $\mathcal L_p$ and $\mathcal L_\mu$ in \eqref{eq:LP} and \eqref{eq:LMu} satisfy the Lopatinskii condition with arbitrary boundary data.
\end{prop}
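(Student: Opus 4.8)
The plan is to read the conclusion off the block structure of the principal symbols already computed in Corollaries~\ref{cor:AL} and~\ref{cor:AM}. In both \eqref{eq:princLP} and \eqref{eq:princLMu}, the last three rows of $\mathcal L_0$ are the identity block acting on the columns associated with $(\delta u_1,\delta u_2,\delta u_3)$, and every entry involving the remaining scalar variable ($\delta p$, resp.\ $\delta\mu$) sits in the first column. So I would fix $\b y\in\partial\Omega$, take $\gb \nu$ the inward unit normal and $\gb \zeta\neq 0$ tangential, and write the unknown of the Lopatinskii system~\eqref{eq:Lopat} as $\tilde{\b u}=(\tilde p,\tilde u_1,\tilde u_2,\tilde u_3)$, resp.\ $(\tilde \mu,\tilde u_1,\tilde u_2,\tilde u_3)$. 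Since the last three rows of $\mathcal L_0$ carry no $\gb \xi$-dependence, the substitution $\i\gb \xi\mapsto\i\gb \zeta+\gb \nu\frac{d}{dz}$ leaves them untouched, and the last three equations of $\mathcal L_0(\b y,\i\gb \zeta+\gb \nu\frac{d}{dz})\tilde{\b u}(z)=0$ read $\tilde u_i(z)\equiv 0$ for $z>0$, $i=1,2,3$.

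Inserting $\tilde u_i\equiv 0$, the first three equations collapse to an overdetermined \emph{scalar} first-order ODE for the one remaining component: for $\mathcal L_p$ these are $\big(\i\zeta_j+\nu_j\frac{d}{dz}\big)\tilde p(z)=0$, $j=1,2,3$, and for $\mathcal L_\mu$ they are $\big(\i\,\gb \zeta\cdot\varepsilon(\b u)_j+(\gb \nu\cdot\varepsilon(\b u)_j)\frac{d}{dz}\big)\tilde \mu(z)=0$, $j=1,2,3$. I would then observe that the coefficient of $\frac{d}{dz}$ is nonzero for at least one $j$: for $\mathcal L_p$ this is immediate from $\gb \nu\neq 0$; for $\mathcal L_\mu$ it uses ellipticity at $\b y$, i.e.\ $\det\varepsilon(\b u(\b y))\neq 0$ from Corollary~\ref{cor:AM}, which by symmetry of $\varepsilon$ gives $\big(\gb \nu\cdot\varepsilon(\b u)_1,\gb \nu\cdot\varepsilon(\b u)_2,\gb \nu\cdot\varepsilon(\b u)_3\big)=\varepsilon(\b u)\gb \nu\neq 0$. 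For such a $j$ the ODE forces $\tilde p(z)=C\,e^{-\i c z}$ (resp.\ the analogous expression for $\tilde \mu$) with $c\in\mathbb{R}$, hence $|\tilde p(z)|=|C|$ is constant in $z$, and the decay requirement $\tilde{\b u}(z)\to 0$ as $z\to\infty$ forces $C=0$. Thus $\tilde p\equiv 0$, resp.\ $\tilde \mu\equiv 0$, and combined with $\tilde u_i\equiv 0$ this yields $\tilde{\b u}\equiv 0$, which is precisely the Lopatinskii covering condition of Definition~\ref{def:Lopat}.

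The point worth stressing is that $\tilde{\b u}\equiv 0$ was obtained from the interior relations $\mathcal L_0\tilde{\b u}=0$ alone --- the identity block originates from the interior displacement data $\delta\b u=\delta\b K$, which already pins down the displacement completely --- so no boundary symbol $\mathcal B_0$ ever entered, and this is exactly why the claim holds for \emph{arbitrary} boundary data. The same computation is valid verbatim in the dynamic case, since allowing $\gb \nu,\gb \zeta$ a temporal component does not change the first column of $\mathcal L_0$, and the second-order $\mu,\rho$-terms in rows $1$--$3$ only multiply the already-vanishing $\tilde u_i$. I expect the only non-bookkeeping step to be the $\mathcal L_\mu$ case, where non-singularity of the strain at the boundary point is genuinely used to keep the ODE for $\tilde \mu$ first order in $\frac{d}{dz}$ rather than degenerate; everything else is a direct consequence of the triangular structure of the symbol.
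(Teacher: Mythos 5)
Your proof is correct and follows essentially the same route as the paper: the identity block in the principal symbol forces $\tilde u_i\equiv 0$, and the remaining scalar first-order ODEs admit only constant-modulus exponentials, so the decay requirement kills the last component without ever invoking $\mathcal B_0$. The only (minor) divergence is in the $\mathcal L_\mu$ case, where you use $\det\varepsilon(\b u(\b y))\neq 0$ to get $\varepsilon(\b u)\gb\nu\neq 0$ and thus a nondegenerate ODE, whereas the paper argues by a case distinction on whether $\b g_j\cdot\gb\nu$ vanishes; your variant is if anything cleaner, making explicit the non-degeneracy of the strain that the ``arbitrary boundary data'' claim tacitly presupposes.
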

\begin{proof}
For checking the Lopatinskii (or covering) condition in Definition \ref{def:Lopat}, we have to consider the vector space of functions satisfying the system \eqref{eq:Lopat} and show that it is trivial.

Let $\mathcal L=\mathcal L_p$ with principal symbol \eqref{eq:princLP}, and let $\b y$ be a point on the boundary. Then the system of equations $\mathcal L_0(\b y,\i\gb \zeta+\gb\nu\frac{d}{dz})\tilde{\b u}(z)=0$ in \eqref{eq:Lopat} comprises 6 equations for 4 unknowns $(\tilde u_1(z),\tilde u_2(z),\tilde u_3(z),\tilde u_4(z))$.

In the entries $(L_0)_{ij}, 4\leq i\leq 6, 2\leq j\leq 4$ of \eqref{eq:princLP}, there is a 3-by-3 identity matrix. The three last equations of \eqref{eq:Lopat} therefore mean that 
\begin{equation}
\label{eq:u123Vanish}
	\tilde u_2(z)=\tilde u_3(z)=\tilde u_4(z) = 0.
\end{equation}
The first three equations in \eqref{eq:Lopat} then reduce to
\begin{equation}
\label{eq:LopatMu}
	\begin{aligned}
		(\i\zeta_1+\nu_1\frac{d}{dz})\tilde u_1(z) &=0 \\
		(\i\zeta_2+\nu_2\frac{d}{dz})\tilde u_1(z) &=0 \\
		(\i\zeta_3+\nu_3\frac{d}{dz})\tilde u_1(z) &=0.
	\end{aligned}
\end{equation}
Any functions which are in the vector space considered in Definition~\ref{def:Lopat} have to satisfy \eqref{eq:LopatMu}. 

The only possible solutions of \eqref{eq:LopatMu} consist of functions of form $e^{\i\lambda z}$, where the parameter $\lambda=\frac{\zeta_i}{\nu_i}$ is a real number; neither of these functions tends to $0$ for $z\to\infty$. In this argument, we did not use any boundary constraint. Therefore the vector space to be considered in Definition~\ref{def:Lopat} is trivial for every $\b y\in\partial\Omega$, and the Lopatinskii covering condition is always satisfied.

Now let $\mathcal L=\mathcal L_\mu$ with principal symbol \eqref{eq:princLMu}. Then, for $\b y$ on the boundary, consider the system $\mathcal L_0(\b y,\i\gb \zeta+\gb\nu\frac{d}{dz})\tilde{\b u}(z)=0$ in \eqref{eq:Lopat}. Similarly to \eqref{eq:u123Vanish}, the last three components vanish, and the system reduces to
\begin{equation}
\label{eq:SubGlgLopatMu}
\begin{aligned}
	\b g_1\cdot(\i\gb \zeta_s+\gb\nu\frac{d}{dz})\tilde u_1(z)&=0 \\
	\b g_2\cdot(\i\gb \zeta_s+\gb\nu\frac{d}{dz})\tilde u_1(z)&=0 \\
	\b g_3\cdot(\i\gb \zeta_s+\gb\nu\frac{d}{dz})\tilde u_1(z)&=0.
\end{aligned}
\end{equation}
Here, we use fixed vectors $\b g_j=\varepsilon(\b u(\b y))_j, j=1,2,3$ and $\gb\zeta_s=(\zeta_1,\zeta_2,\zeta_3)$.

Suppose that the relation $\b g_j\cdot \gb\nu\neq 0$ holds for all $j$. Then we have solutions of \eqref{eq:SubGlgLopatMu} of form $e^{\i\frac{\b g_i\cdot\gb\zeta_s}{\b g_i\cdot\gb \nu}z}$. The numbers $\frac{\b g_i\cdot\gb\zeta_s}{\b g_i\cdot\gb \nu}$ are real, so neither of these functions  tends to $0$. 

Suppose, on the other hand, that we have $\b g_{j_0}\cdot\gb\nu=0$ for one $j_0$. Then, using $\gb\nu\cdot\gb\zeta_s=0$, we have $\b g_{j_0}\cdot\gb\zeta_s\neq 0$. Inserting this information in \eqref{eq:SubGlgLopatMu}, we directly get $\tilde u_1(z)=0$.

Therefore the Lopatinskii condition for $\mathcal L_\mu$ is satisfied with arbitrary boundary data.
\end{proof}

\begin{prop}
\label{prop:ARho}
	The system $\mathcal L_\rho$ in \eqref{eq:LRho} satisfies the Lopatinskii condition with arbitrary boundary data if and only if $\b u(\b y,t)_{tt}\neq 0$ for all $(\b y,t)\in\partial(\Omega\times T)$.
\end{prop}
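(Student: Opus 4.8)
The plan is to check the covering condition of Definition~\ref{def:Lopat} directly, using the principal symbol of $\mathcal L_\rho$ from Corollary~\ref{cor:ARho} (the matrix in \eqref{eq:princLRho}). The decisive structural feature is that the three interior-data equations $\delta u_i=\delta K_i$ enter $\mathcal L_\rho$ through a $3\times 3$ identity block, so in any solution of the Lopatinskii system the displacement components are forced to vanish, leaving a purely algebraic constraint on $\delta\rho$ whose coefficient is $\b u(\b y,t)_{tt}$.

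Concretely, I would fix $(\b y,t)\in\partial(\Omega\times T)$, let $\gb\nu$ be the inward unit normal and $\gb\zeta\neq 0$ a tangential vector at that point, and write out the system \eqref{eq:Lopat} with $\mathcal L_0$ replaced by $\mathcal L_0((\b y,t),\i\gb\zeta+\gb\nu\frac{d}{dz})$. Denote the unknown by $\tilde{\b u}=(\tilde u_1,\tilde u_2,\tilde u_3,\tilde u_4)$, with $\tilde u_1$ attached to $\delta\rho$ and $\tilde u_{k+1}$ to $\delta u_k$. The last three rows of \eqref{eq:princLRho} carry the identity block and no $z$-derivatives, so they give $\tilde u_2(z)=\tilde u_3(z)=\tilde u_4(z)\equiv 0$. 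Substituting this into the first three rows makes every entry of the displacement block disappear, and the system collapses to
\[
	-(u_i)_{tt}(\b y,t)\,\tilde u_1(z)=0,\qquad i=1,2,3.
\]

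For the direction $\b u(\b y,t)_{tt}\neq 0\ \Rightarrow$ covering condition: since some $(u_i)_{tt}(\b y,t)\neq 0$, the displayed relations force $\tilde u_1\equiv 0$, hence the space of solutions of \eqref{eq:Lopat} decaying as $z\to\infty$ is $\{0\}$ and the Lopatinskii condition holds at $(\b y,t)$. No information from the boundary part $\mathcal B_0$ of the Lopatinskii system was used, so the condition holds with arbitrary boundary data. For the converse I argue by contraposition: if $\b u(\b y_0,t_0)_{tt}=0$ at some boundary point, the displayed relations become vacuous and $\mathcal L_0$ imposes no condition on $\tilde u_1$; moreover the boundary operator $\mathcal B$ of \eqref{eq:Boundary} involves only the displacement components $\delta\b u$ (which vanish), so its principal part $\mathcal B_0$ imposes no condition on $\tilde u_1$ either. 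Then $\tilde{\b u}(z)=(e^{-z},0,0,0)$ is a nontrivial solution of \eqref{eq:Lopat} tending to $0$ as $z\to\infty$, so the Lopatinskii condition fails at $(\b y_0,t_0)$. The two directions together give the stated equivalence.

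I expect the computations to be routine. The points that need some care are: noticing that the $\delta\rho$-column of the symbol carries no $z$-derivative, so the governing relation there is algebraic and any decaying profile of $\tilde u_1$ is admissible once its coefficient vanishes; handling the lateral, bottom and top parts of the cylinder boundary uniformly, which is legitimate because the interior-data rows act identically on all of $\partial(\Omega\times T)$; and checking that $\mathcal B_0$ --- as well as the augmented $\mathcal B'$ of \eqref{eq:BoundNew}, which only constrains $\delta p$ and $\delta\mu$, both absent from $\mathcal L_\rho$ --- never constrains $\delta\rho$. I do not anticipate any genuine obstacle beyond this bookkeeping.
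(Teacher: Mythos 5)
Your proposal is correct and follows essentially the same route as the paper's proof: the identity block in the interior-data rows forces $\tilde u_2=\tilde u_3=\tilde u_4\equiv 0$, the remaining rows reduce to the algebraic relations $(u_i)_{tt}\,\tilde u_1(z)=0$, and the converse is obtained by exhibiting a decaying profile such as $e^{-z}$ in the $\delta\rho$-slot when $\b u_{tt}$ vanishes. Your explicit check that neither $\mathcal B_0$ nor the augmented $\mathcal B'_0$ constrains $\tilde u_1$ is a small extra precaution the paper leaves implicit, but it does not change the argument.
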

\begin{proof}
Consider the system $\mathcal L_0(\b y,\i\gb \zeta+\gb\nu\frac{d}{dz})\tilde{\b u}(z)=0$ in \eqref{eq:Lopat} for the operator $\mathcal L_\rho$. As in the proof of Proposition \ref{prop:APAMu}, we get 
\[
	\tilde u_2(z)=\tilde u_3(z)=\tilde u_4(z)=0.
\]
The equations for $\tilde u_1(z)$ therefore reduce to
\begin{equation}
\label{eq:LopatRho}
	(u_i)_{tt} \tilde u_1(z) = 0 \qquad \text{for } i = 1,2,3
\end{equation}
If we suppose $\b u(\b y,t)_{tt}\neq 0$, then \eqref{eq:LopatRho} implies $\tilde u_1(z)=0$, therefore the Lopatinskii condition is satisfied.

Conversely, suppose that $\b u(\b y,t)_{tt}=0$ for one $(\b y,t)$. Then one can choose $\b{\tilde u}(z)=(u_1(z),0,0,0)$ satisfying \eqref{eq:LopatRho} for any function $u_1$ that satisfies $u_1(z)\to 0$. Therefore the Lopatinskii condition would be violated.
\end{proof}

\begin{prop}
\label{prop:APMu}
	The system $\mathcal L_{p\mu}$ in \eqref{eq:LPMu} satisfies the Lopatinskii condition with boundary data \eqref{eq:Boundary}, \eqref{eq:BoundNew} at $\b y\in\partial\Omega$ provided that the unit normal vector $\gb \nu(\b y)$ is not an eigenvector of $\varepsilon(\b u(\b y))$.
\end{prop}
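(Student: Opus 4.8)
\emph{Proof plan.}
The plan is to verify Definition~\ref{def:Lopat} directly for $\mathcal L=\mathcal L_{p\mu}$ at a boundary point $\b y$, and to show that the space of solutions of \eqref{eq:Lopat} decaying as $z\to\infty$ is trivial precisely because $\gb\nu(\b y)$ is assumed not to be an eigenvector of $\varepsilon(\b u(\b y))$. I would work with the principal symbol \eqref{eq:princLPMu} (in the quasi-static case one uses its analogue from \eqref{eq:princSymbSpat}; at a point $\b y\in\partial\Omega$ of the lateral boundary of $\Omega\times T$ the inward normal $\gb\nu$ has no time component, so the time covector entry becomes a mere constant and the reduction below is the same). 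Substituting $\i\gb\xi\mapsto\i\gb\zeta+\gb\nu\tfrac{d}{dz}$ in \eqref{eq:princLPMu}, the last three rows carry the $3\times 3$ identity block in the $\delta\b u$-columns, so the last three equations of \eqref{eq:Lopat} force the displacement components $\tilde u_3(z)=\tilde u_4(z)=\tilde u_5(z)=0$. Writing $\b w:=\i\gb\zeta_s+\gb\nu\tfrac{d}{dz}$ (a vector of operators) and using $\gb\xi_s\cdot\varepsilon(\b u)_i=(\varepsilon(\b u)\gb\xi_s)_i$, the first three equations collapse to the single vector relation
\[
	\b w\,\tilde u_1(z)\;+\;\varepsilon(\b u(\b y))\,\b w\,\tilde u_2(z)\;=\;\b 0,
\]
a system of three scalar constant-coefficient ODEs for the two scalar unknowns $\tilde u_1$ (the $\delta p$-component) and $\tilde u_2$ (the $\delta\mu$-component).

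Next I would analyse this overdetermined ODE system. Every component of a solution is an exponential polynomial $\sum_k q_k(z)e^{\tau_k z}$, and inserting an ansatz $\tilde{\b u}(z)=(z^\ell\b c_\ell+\dots+\b c_0)e^{\tau z}$ shows that any exponent $\tau$ occurring must satisfy $\ker P(\tau)\neq\{0\}$, where $P(\tau):=\bigl(\,\b w(\tau)\ \big|\ \varepsilon(\b u(\b y))\,\b w(\tau)\,\bigr)$ is the $3\times 2$ matrix with $\b w(\tau):=\i\gb\zeta_s+\tau\gb\nu$. A nonzero vector $(c_1,c_2)\in\ker P(\tau)$ obeys $(c_1 I+c_2\,\varepsilon(\b u(\b y)))\,\b w(\tau)=\b 0$: if $c_2=0$ this forces $\b w(\tau)=\b 0$, while if $c_2\neq 0$ the vector $\b w(\tau)$ is an eigenvector of the real symmetric matrix $\varepsilon(\b u(\b y))$ for a (necessarily real) eigenvalue.

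Now the hypothesis enters. Write $\tau=a+\i b$, so that $\b w(\tau)=a\gb\nu+\i(\gb\zeta_s+b\gb\nu)$ (note $\gb\zeta_s\cdot\gb\nu=0$ since $\gb\zeta$ is tangential and $\gb\nu$ is spatially normal). Since the eigenspaces of a real symmetric matrix are spanned by real vectors, in both cases above the real part $a\gb\nu$ of $\b w(\tau)$ must either vanish or lie in some eigenspace of $\varepsilon(\b u(\b y))$; as $\gb\nu$ is not an eigenvector, we get $a=0$. Hence every exponent $\tau$ appearing in a solution of the reduced system is purely imaginary, and an exponential polynomial all of whose exponents are purely imaginary cannot tend to $0$ as $z\to\infty$ unless it vanishes identically. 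Therefore $\tilde u_1\equiv\tilde u_2\equiv 0$, the solution space in Definition~\ref{def:Lopat} is trivial, and $\mathcal B$ covers $\mathcal L_{p\mu}$ at $\b y$. Since this argument uses no boundary conditions, it applies to the boundary operator \eqref{eq:Boundary} as well as to the augmented one \eqref{eq:BoundNew}.

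The part that needs the most care is the bookkeeping for the overdetermined ODE system: one must argue that every admissible exponent is a point where the full symbol matrix $P(\tau)$ drops column rank, so that it is not enough to solve one $2\times 2$ subsystem and ignore the third equation; and the small linear-algebra fact that the strain is real symmetric (so its eigenvectors are real) is exactly what converts ``$\gb\nu$ is not an eigenvector'' into ``$\operatorname{Re}\tau=0$''. As an alternative route that leans on \eqref{eq:BoundNew} directly: since $\gb\nu$ is not an eigenvector, $\bigl(\,\gb\nu\ \big|\ \varepsilon(\b u(\b y))\gb\nu\,\bigr)$ has rank $2$, so two of the three reduced equations form a well-posed first-order linear ODE system for $(\tilde u_1,\tilde u_2)$; the conditions $\delta p|_{\partial\Omega}=0$ and $\delta\mu|_{\partial\Omega}=0$ in \eqref{eq:BoundNew} give $\tilde u_1(0)=\tilde u_2(0)=0$, and uniqueness for the initial value problem again forces $\tilde u_1\equiv\tilde u_2\equiv 0$.
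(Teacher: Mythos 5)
Your proof is correct, and it reaches the conclusion by a genuinely different route from the paper's. Both arguments make the same initial reduction (the identity block kills the $\delta\b u$-components, leaving the three scalar ODEs \eqref{eq:Lop1}--\eqref{eq:Lop3} for $(\tilde u_1,\tilde u_2)$). The paper then eliminates $\tilde u_2$ pairwise to get three second-order equations for $\tilde u_1$, and excludes the two degenerate situations (all coefficients vanishing, and double characteristic roots producing $z\,e^{\lambda z}$ solutions) by solving two explicit linear systems in the six quantities $\b g_j\cdot\gb\zeta_s$, $\b g_j\cdot\gb\nu$; in each case the contradiction is that $\gb\nu$ would be an eigenvector of $\varepsilon(\b u(\b y))$. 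Since that argument only shows the decaying solutions are pure exponentials (without controlling their real parts), the paper must then invoke the augmented conditions \eqref{eq:BoundNew}, i.e.\ $\delta p|_{\partial\Omega}=\delta\mu|_{\partial\Omega}=0$, to force $\tilde u_1(0)=\tilde u_2(0)=0$. You instead read off the admissible exponents directly from the rank-drop condition for the $3\times 2$ symbol $P(\tau)=\bigl(\b w(\tau)\,\big|\,\varepsilon\b w(\tau)\bigr)$ and exploit that $\varepsilon(\b u(\b y))$ is real symmetric, so its eigenvalues and eigenspaces are real; hence the real part $a\gb\nu$ of $\b w(\tau)$ would have to lie in an eigenspace, forcing $a=0$. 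This yields the stronger conclusion that there are \emph{no} nontrivial decaying solutions at all, so the covering condition holds with arbitrary boundary data and \eqref{eq:BoundNew} is not actually needed here; your fallback argument via the well-posed first-order system is essentially a streamlined version of the paper's endgame and is the only place \eqref{eq:BoundNew} enters. Two small points you should make explicit, neither of which is a gap: (i) to know a priori that all solutions of the overdetermined constant-coefficient system are exponential polynomials, note that some $2\times 2$ minor of $P$ is a nonzero polynomial in $\tau$ — this follows from $\rank\bigl(\gb\nu\,\big|\,\varepsilon\gb\nu\bigr)=2$, itself a consequence of the non-eigenvector hypothesis; (ii) the standard fact that a nonzero exponential polynomial with purely imaginary exponents cannot tend to $0$ as $z\to\infty$ deserves a one-line justification.
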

\begin{proof}
Let $\mathcal L=\mathcal L_{p\mu}$ with principal symbol \eqref{eq:princLPMu}.
Consider the system of equations 
\[ 
	\mathcal L_0(\b y,\i\gb \zeta+\gb\nu\frac{d}{dz})\tilde{\b u}(z)=0
\]
from \eqref{eq:Lopat} for the vector $\tilde{\b u}(z)=(\tilde u_1(z),\ldots,\tilde u_5(z))$. The last three equations of this system yield that $\tilde u_3(z)=\tilde u_4(z)=\tilde u_5(z)=0$, similar to \eqref{eq:u123Vanish}. For the two remaining functions $\tilde u_1(z)$ and $\tilde u_2(z)$, the equations reduce to
\begin{align}
\label{eq:Lop1}
	(\i\zeta_1+\nu_1\frac{d}{dz})\tilde u_1(z) + \b g_1\cdot(\i\gb \zeta_s+\gb\nu\frac{d}{dz})\tilde u_2(z)&=0 \\
\label{eq:Lop2}
	(\i\zeta_2+\nu_2\frac{d}{dz})\tilde u_1(z) + \b g_2\cdot(\i\gb \zeta_s+\gb\nu\frac{d}{dz})\tilde u_2(z)&=0 \\
\label{eq:Lop3}
	(\i\zeta_3+\nu_3\frac{d}{dz})\tilde u_1(z) + \b g_3\cdot(\i\gb \zeta_s+\gb\nu\frac{d}{dz})\tilde u_2(z)&=0 ,
\end{align}
Here, $\gb\zeta_s=(\zeta_1,\zeta_2,\zeta_3)$ and we use the fixed vectors
\begin{equation}
\label{eq:DefG}
	\b g_j=\varepsilon(\b u(\b y))_j, j=1,2,3.
\end{equation}
Now let us assume that there exist non-zero solutions to this system. By elimination of $\tilde u_2(z)$ and $\frac{d}{dz}\tilde u_2(z)$ from \eqref{eq:Lop1}-\eqref{eq:Lop2}, \eqref{eq:Lop1}-\eqref{eq:Lop3} and \eqref{eq:Lop2}-\eqref{eq:Lop3}, respectively, we find the following three equations which $\tilde u_1(z)$ has to satisfy:
\begin{align}
\label{eq:LopEl1}
	a_{1,2} \frac{d^2}{dz^2}\tilde u_1(z)+b_{1,2} \frac{d}{dz}\tilde u_1(z)+c_{1,2}\tilde u_1(z)&=0 \\
\label{eq:LopEl2}
	a_{1,3} \frac{d^2}{dz^2}\tilde u_1(z)+b_{1,3} \frac{d}{dz}\tilde u_1(z)+c_{1,3}\tilde u_1(z)&=0 \\
\label{eq:LopEl3}
	a_{2,3} \frac{d^2}{dz^2}\tilde u_1(z)+b_{2,3} \frac{d}{dz}\tilde u_1(z)+c_{2,3}\tilde u_1(z)&=0,
\end{align}
with the coefficients
\begin{align}
\label{eq:KoeffLop1}
	a_{p,q} &= \nu_p\b g_q\cdot\gb\nu-\nu_q\b g_p\cdot\gb\nu \\
\label{eq:KoeffLop2}
	b_{p,q} &=\i(\zeta_p \b g_q\cdot \gb\nu + \nu_p\b g_q\cdot\gb\zeta_s-\zeta_q\b g_p\cdot\gb\nu-\nu_q\b g_p\cdot\gb\zeta_s) \\
\label{eq:KoeffLop3}
	c_{p,q} &= \zeta_q\b g_p\cdot\gb\zeta_s-\zeta_p\b g_q\cdot\gb\zeta_s.
\end{align}
The same equations are obtained for $\tilde u_2(z)$ also, by elimination of $\tilde u_1(z)$ from \eqref{eq:Lop1}-\eqref{eq:Lop3}.

Let us consider the matrix of the coefficients in \eqref{eq:LopEl1}-\eqref{eq:LopEl3},
\begin{equation}
\label{eq:MatrixA}
	A=
	\begin{pmatrix}
		a_{1,2} & b_{1,2} & c_{1,2} \\
		a_{1,3} & b_{1,3} & c_{1,3} \\
		a_{2,3} & b_{2,3} & c_{2,3} \\
	\end{pmatrix}.
\end{equation}

We claim that under our assumption, the system \eqref{eq:LopEl1}-\eqref{eq:LopEl3} is nontrivial, which is equivalent to $A\neq 0$.

Assume, on the contrary, that all entries in $A$ vanish: 
\begin{equation}
\label{eq:AVanish}
A=0.
\end{equation}
Incorporating the information \eqref{eq:KoeffLop1}-\eqref{eq:KoeffLop3}, the nine equations in \eqref{eq:AVanish} can be written in matrix form as 
\begin{equation}
\label{eq:NonDegSys}
	\begin{pmatrix}
		0&0&0&-\nu_2&\nu_1&0\\
		0&0&0&-\nu_3&0&\nu_1\\
		0&0&0&0&-\nu_3&\nu_2\\
		-\nu_2&\nu_1&0&-\zeta_2&\zeta_1&0\\
		-\nu_3&0&\nu_1&-\zeta_3&0&\zeta_1\\
		0&-\nu_3&\nu_2&0&-\zeta_3&\zeta_2\\
		\zeta_2&-\zeta_1&0&0&0&0\\
		\zeta_3&0&-\zeta_1&0&0&0\\
		0&\zeta_3&-\zeta_2&0&0&0
	\end{pmatrix}
	*
	\begin{pmatrix}
		\b g_1\cdot\gb\zeta_s\\
		\b g_2\cdot\gb\zeta_s\\
		\b g_3\cdot\gb\zeta_s\\
		\b g_1\cdot\gb\nu\\
		\b g_2\cdot\gb\nu\\
		\b g_3\cdot\gb\nu
	\end{pmatrix}
	= 0
\end{equation}
Here, $*$ denotes matrix multiplication.

The system \eqref{eq:NonDegSys} can be seen as linear system of equations for the unknown variables $\{\b g_1\cdot\gb\zeta_s,\;\b g_2\cdot\gb\zeta_s,\;\b g_3\cdot\gb\zeta_s,\;\b g_1\cdot\gb\nu,\;\b g_2\cdot\gb\nu,\;\b g_3\cdot\gb\nu\}$. The system can then be solved by elimination. The one dimensional solution space is generated by the vector $(\zeta_1,\zeta_2,\zeta_3,\nu_1,\nu_2,\nu_3)$. In particular, we have
\[
	\b g_j\cdot\gb \nu=\kappa \nu_j\qquad\text{ for }j=1,2,3.
\]
Using the definition of $\b g_j$ in \eqref{eq:DefG}, it follows that 
\begin{equation}
\label{eq:EigenvectEps}
	\varepsilon(\b u(\b y))*\gb \nu(\b y)=\kappa\gb \nu(\b y),
\end{equation}
as we have that $\varepsilon=\varepsilon^\top$. But, by assumption, $\gb \nu (\b y)$ must not be an eigenvector of the strain $\varepsilon(\b u(\b y))$, so this cannot happen. Therefore \eqref{eq:AVanish} is wrong and $A\neq 0$.

Consequently, the equations \eqref{eq:LopEl1}-\eqref{eq:LopEl3} are always nontrivial in our case. As these are linear ordinary differential equations of second order, the basis of solutions consists of functions of form $e^{\lambda z}$ and $z\;e^{\lambda z}$.

We now claim that there can be no solutions of form $z\;e^{\lambda z}$ to the differential equations \eqref{eq:Lop1}-\eqref{eq:Lop3}.

Assume, on the contrary, that a solution of form $z\;e^{\lambda z}$ exists. According to the theory of linear ODE, $\lambda$ is a double zero of the three characteristic polyomials 
\[
	a_k\lambda^2+b_k\lambda + c_k = 0 \qquad \text{for } k=(1,2),(1,3),(2,3).
\]
The discriminant has to vanish, so we have $\lambda = \frac{-b_k\pm\sqrt{b_k^2-4 a_k c_k}}{2a_k}=\frac{-b_k}{2a_k}$, thus $b_k=-2\lambda a_k$ for $k=(1,2),(1,3),(2,3)$; consequently we also have $c_k = 4\lambda^2 a_k$. Because of these proportionalities between $a_k$, $b_k$ and $c_k$, the matrix $A$ in \eqref{eq:MatrixA} has rank~1. Therefore we have also a proportionality between the rows in $A$, i.e. the relations
\begin{equation}
\label{eq:LopSystem}
\begin{aligned}
a_{1,3}&=\gamma \;a_{1,2}\\
a_{2,3}&=\delta \;a_{1,2}
\end{aligned}
\qquad
\begin{aligned}
b_{1,3}&=\gamma \;b_{1,2}\\
b_{2,3}&=\delta \;b_{1,2}
\end{aligned}
\qquad
\begin{aligned}
c_{1,3}&=\gamma \;c_{1,2}\\
c_{2,3}&=\delta \;c_{1,2}
\end{aligned}
\end{equation}
for some constants $\gamma,\delta$. Using \eqref{eq:KoeffLop1}-\eqref{eq:KoeffLop3}, the equations in \eqref{eq:LopSystem} can be written in matrix form as
\begin{equation}
\label{eq:NoDoubleValueSystem}
	\begin{pmatrix}
		0&0&0&-\nu_3+\gamma \nu_2 & -\gamma \nu_1 & \nu_1 \\
		0&0&0&0& \delta \nu_2 & -\nu_3-\delta \nu_1 & \nu_2 \\
		-\nu_3+\gamma\nu_2&-\gamma\nu_1&\nu_1&-\zeta_3+\gamma\zeta_2&-\gamma\zeta_1&\zeta_1\\
		\delta\nu_2&-\nu_3-\delta\nu_1&v_2&+\delta\zeta_2&-\zeta_3-\delta\zeta_1&\zeta_2\\
		\zeta_3-\gamma\zeta_2&+\gamma\zeta_1&-\zeta_1&0&0&0\\
		-\delta\zeta_2&\zeta_3+\delta\zeta_1&-\zeta_2&0&0&0
	\end{pmatrix}
	*
	\begin{pmatrix}
		\b g_1\cdot\gb\zeta_s\\
		\b g_2\cdot\gb\zeta_s\\
		\b g_3\cdot\gb\zeta_s\\
		\b g_1\cdot\gb\nu\\
		\b g_2\cdot\gb\nu\\
		\b g_3\cdot\gb\nu
	\end{pmatrix}
	= 0,
\end{equation}
which we interpret, as in \eqref{eq:NonDegSys}, as linear system for the unknown variables $\{\b g_1\cdot\gb\zeta_s,\;\b g_2\cdot\gb\zeta_s,\;\b g_3\cdot\gb\zeta_s,\;\b g_1\cdot\gb\nu,\;\b g_2\cdot\gb\nu,\;\b g_3\cdot\gb\nu\}$. Now the matrix in \eqref{eq:NoDoubleValueSystem} has rank~6, so the equations reduce to 
\begin{equation}
\label{eq:ReducSysLopat}
\begin{aligned}
\b g_j\cdot\gb\zeta_s&=0\qquad j=1,2,3\qquad\text{ and}\\
\b g_j\cdot\gb\nu &= 0\qquad j=1,2,3.
\end{aligned}
\end{equation}
As in \eqref{eq:EigenvectEps}, it follows from the second equation in \eqref{eq:ReducSysLopat} that $\gb\nu$ is an eigenvector of the strain $\varepsilon(\b u(\b y))$ corresponding to the eigenvalue $0$.  But this has been ruled out by hypothesis.

So the only solutions of \eqref{eq:Lop1}-\eqref{eq:Lop3} are of form $e^{\lambda z}$. The boundary condition \eqref{eq:BoundNew} then leads to the unique solution of $\tilde u_1(z)=0$. The same chain of arguments can be invoked to obtain $\tilde u_2(z)=0$. Therefore the Lopatinskii condition is satisfied at $\b y$.
\end{proof}

\subsection{Stability}
In Sections \ref{sec:Ell} and \ref{sec:Lopat}, we collected results about the systems $\mathcal A_p$, $\mathcal A_\mu$ and $\mathcal A_\rho$ defined in \eqref{eq:SystemP}, \eqref{eq:SystemMu}, \eqref{eq:SystemRho}; this now allows to derive the following information about the stability properties of these operators, and about reconstruction of parameters in the linearized inverse problem of quantitative elastography.

We choose numbers $p=2, l$ with $l>0, pl>n$, and a bounded and connected domain $\Omega$ with $C^{l+2}$-boundary. 

In the following statements, we suppose that we have a reference state $(p,\mu,\rho)$ with $\mu\in H^{l+1}(\Omega)$, $\rho\in H^{l}(\Omega)$ and for the reference displacement $\b u = \mathcal V(p,\mu,\rho)$ in \eqref{eq:RefDispl} we require $\b u\in H^{l+2}(\Omega)$, $\b u_{tt}\in H^{l}(\Omega)$. The existence of such a displacement field $\b u$ can be ensured by \cite[Thm.8.1]{MclThoYoo11} if we require that the material parameter are in a H\"older space.

Note that in our analysis, we consider $(\delta p,\delta\mu,\delta\rho,\delta u_1,\delta u_2,\delta u_3)$ the parameters and the solutions as variables in the system.  For subsequent analysis, we use the Douglis-Nirenberg numbers $(t_j)_{j=1}^6=(1,1,0,2,2,2)$, corresponding to the variables $(\delta p,\delta\mu,\delta\rho,\delta u_1,\delta u_2,\delta u_3)$, as well as $(s_i)_{i=1}^6=(0,0,0,-2,-2,-2)$. -- Note that for the particular operators $\mathcal A_\mu, \mathcal A_\rho, \mathcal A_p,\mathcal A_\lambda$, only four of these variables are used in the corresponding system.

We consider the solutions of the linearized versions as follows: $\delta p\in H^{l+1}(\Omega)$, $\delta \mu\in H^{l+1}(\Omega)$, $\delta \rho\in H^{l}(\Omega)$, $\delta u_i\in H^{l+2}(\Omega)$ for $i=1,2,3$. The inhomogeneity $\delta\b K$ is supposed to lie in $H^{l+2}(\Omega)^3$. The possibility of choosing these functions also follows from \cite[Thm.8.1]{MclThoYoo11}.

In the following theorem, we have recourse to the notion of an operator having a \emph{left regularizer}, as in Theorem \ref{thm:Sol}, see also \cite{Kre82}. Other names for this notion are that $A$ is a \emph{left semi-Fredholm operator} \cite[Ch. XI, \S 2]{Con90}.

\begin{theorem}
\label{thm:StabMu}
The operator $\mathcal A_\mu^{(r)}$ for $r\geq 1$ measurements, described in \eqref{eq:SystemMu2} and considered in the quasi-static case, has a left regularizer on  $\Omega$ precisely when, for all $\b x\in\Omega$,
\begin{equation}
\label{eq:NonSingStrainT}
	\det(\varepsilon(\b u_k(\b x)))\neq 0\qquad \text{for at least one measurement } 1\leq k\leq r.
\end{equation}
Then the stability estimate
\begin{equation}
\label{eq:EstAMu}
	 \| \delta \mu \|_{H^{l+1}(\Omega)/K_1} \leq \;C \sum_{k=1}^r \| \delta\b u_k \|_{H^{l+2}(\Omega)} 	
\end{equation}
holds with a finite-dimensional kernel $K_1$.
\end{theorem}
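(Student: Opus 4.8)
The plan is to combine Theorem~\ref{thm:Sol} (the Solonnikov a-priori estimate for redundant elliptic systems) with the ellipticity result of Corollary~\ref{cor:AM} and the Lopatinskii result of Proposition~\ref{prop:APAMu}, and then to handle the passage from the raw a-priori estimate to the quotient estimate \eqref{eq:EstAMu} by an abstract functional-analytic argument. First I would set up the system: the operator under consideration is $\mathcal A_\mu^{(r)}$ acting on the unknown $\b w = (\delta\mu,\delta\b u_1,\ldots,\delta\b u_r)$, which is a redundant system of the form \eqref{eq:PDEred} with $M\geq m$. With the Douglis--Nirenberg numbers $(t_j) = (1,2,2,2,\ldots)$ and $(s_i) = (0,-2,-2,-2,\ldots)$ fixed in Section~\ref{sec:Setting}, the hypotheses of Theorem~\ref{thm:Sol} on the coefficients and on $\Omega$ are met by the standing regularity assumptions ($p=2$, $l>0$, $pl>n$, $C^{l+2}$-boundary, $\mu\in H^{l+1}$, $\b u_k\in H^{l+2}$), so Theorem~\ref{thm:Sol} applies and the three listed conditions are equivalent.

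Next I would establish the equivalence in the theorem statement. By Corollary~\ref{cor:AM}, the operator $\mathcal L_\mu^{(r)}$ is overdetermined elliptic at every $\b x\in\Omega$ precisely when \eqref{eq:NonSingStrainT} holds, i.e.\ at least one $\det(\varepsilon(\b u_k(\b x)))\neq 0$; and by Proposition~\ref{prop:APAMu} the Lopatinskii covering condition for $\mathcal L_\mu$ (hence, componentwise, for $\mathcal L_\mu^{(r)}$ with the block-diagonal boundary operator $\mathcal B$ applied to each $\delta\b u_k$) holds with arbitrary boundary data. Therefore condition (1) of Theorem~\ref{thm:Sol} is equivalent to \eqref{eq:NonSingStrainT}, and by the theorem this is equivalent to the existence of a left regularizer $\mathcal R$ with $\mathcal R\mathcal A_\mu^{(r)} = \mathcal I - \mathcal T$, $\mathcal T$ compact, which is the first assertion. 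It is also equivalent to the a-priori estimate \eqref{eq:SolloEst}, which in our notation and with the inhomogeneity \eqref{eq:Inhom} (only the interior-data slots $\delta\b K_k$ are nonzero, the force slot vanishes) reads
\begin{equation*}
	\| \delta\mu \|_{H^{l+1}(\Omega)} + \sum_{k=1}^r \| \delta\b u_k \|_{H^{l+2}(\Omega)} \leq C_1 \sum_{k=1}^r \| \delta\b u_k \|_{H^{l+2}(\Omega)} + C_2 \Big( \sum_{k=1}^r \| \delta\b u_k \|_{L^2(\Omega)} + \| \delta\mu \|_{L^2(\Omega)} \Big),
\end{equation*}
using that $\delta\b K_k$ equals $\delta\b u_k$ on $\Omega$ in the $H^{l+2}$-norm and that the boundary terms vanish.

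The remaining, and genuinely non-routine, step is to convert this estimate — which still carries the $L^2$-norm of $\delta\mu$ on the right — into the quotient estimate \eqref{eq:EstAMu} with a \emph{finite-dimensional} kernel $K_1$. Here I would argue as follows: the existence of a left regularizer makes $\mathcal A_\mu^{(r)}$ a left semi-Fredholm operator, so its kernel $N:=\ker\mathcal A_\mu^{(r)}$ is finite-dimensional; projecting off $N$ and using the standard compactness trick (if the quotient estimate failed, a normalized sequence in a complement of $N$ would, by compactness of the embedding $H^{l+2}\hookrightarrow L^2$, produce a nonzero kernel element orthogonal to $N$, a contradiction) absorbs the $C_2$ terms. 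I then read off that the $\delta\mu$-component of $N$, call it $K_1$, is finite-dimensional, and that modulo $K_1$ one has $\|\delta\mu\|_{H^{l+1}(\Omega)/K_1}\lesssim \sum_k\|\delta\b u_k\|_{H^{l+2}(\Omega)}$, which is \eqref{eq:EstAMu}. The main obstacle is precisely this last absorption-of-lower-order-terms argument: it requires care that the kernel one quotients by is exactly the $\delta\mu$-component of $\ker\mathcal A_\mu^{(r)}$ and that the semi-Fredholm/compact-perturbation bookkeeping is done consistently with the Douglis--Nirenberg grading; the explicit structure of $K_1$ (whether it is in fact trivial, giving injectivity) is taken up separately in the paper via the kernel characterization, and is not needed for the stability statement itself.
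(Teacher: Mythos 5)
Your proposal is correct and shares the paper's skeleton --- verify overdetermined ellipticity via Corollary~\ref{cor:AM}, the Lopatinskii condition via Proposition~\ref{prop:APAMu}, and then invoke Theorem~\ref{thm:Sol} --- but you finish by a different functional-analytic route. The paper exits through condition~2 of Theorem~\ref{thm:Sol}: from the left regularizer $\mathcal R_\mu\mathcal A_\mu=\mathcal I-\mathcal T_\mu$ with $\mathcal T_\mu$ compact it concludes that $K=\ker\mathcal A_\mu$ is finite-dimensional (spectral theory for $\mathcal I-\mathcal T_\mu$) and that $\ran\mathcal A_\mu$ is closed, and then applies the open mapping theorem to the induced injection $D(p,l)/K\to\ran\mathcal A_\mu$, which yields $\|(\delta\mu,\delta\b u)\|_{D(p,l)/K}\leq C\|(\mathcal S,\varphi)\|_{R(p,l)}$ in one stroke, with no lower-order terms to absorb. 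You instead exit through condition~3, the a-priori estimate \eqref{eq:SolloEst}, and remove the $C_2\|\delta\mu\|_{L^2}$ term by the standard compactness-contradiction argument using $H^{l+1}\hookrightarrow L^2$; that argument is sound, but it is exactly where the extra bookkeeping you flag (choosing representatives transverse to the kernel, passing to a strongly convergent subsequence) lives, and the paper's route avoids it entirely since conditions~2 and~3 are equivalent anyway. In both treatments the identification of $K_1$ with the $\delta\mu$-part of the kernel is unproblematic because the interior-data rows force $\delta\b u=0$ on the kernel, so $K=K_1\times\{0\}$; and both handle $r\geq 2$ by the same one-line induction, using the multi-measurement version of Corollary~\ref{cor:AM}.
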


\begin{proof}
We first treat the case of $\mathcal A_\mu$. 

Recall that we are treating the equation
\[
	\mathcal A_\mu (\delta\mu,\delta \b u) = (\mathcal S,\varphi),
\] 
and that the operator 
\[
	\mathcal A_\mu = \mathcal L_\mu \times \mathcal B: D(p,l)\to R(p,l)
\]
as described in \eqref{eq:SystemMu}, Section~\ref{sec:Setting}, has 6 equations in the interior and four variables $(\delta \mu,\delta u_1,\delta u_2,\delta u_3)=(\delta \mu,\delta\b u)$. The inhomogeneity is $\mathcal S=(0,0,0,\delta K_1,\delta K_2, \delta K_3)^\top$ and $\varphi = 0$.

The choice of Douglis-Nirenberg numbers corresponding to these variables is $(t_j)_{j=1}^{4}=(1,2,2,2)$ and $(s_i)_{i=1}^6=(0,0,0,-2,-2,-2)$, $(\sigma_k)_{k=1}^3 = (-2,-2,-1)$, and the principal symbol of $\mathcal L_\mu$ is \eqref{eq:princLMu}. Therefore, we have (according to \eqref{eq:Dpl} and \eqref{eq:Rpl}) the spaces
\begin{align*}
	D(p,l)&=H^{l+1}(\Omega)\times (H^{l+2}(\Omega))^3\\
	R(p,l)&=H^{l}(\Omega)\times (H^{l+2}(\Omega))^3\times H^{l+\frac{5}{2}}(\Omega)\times H^{l+\frac{5}{2}}(\Omega)\times H^{l+\frac{3}{2}}(\Omega).
\end{align*}

Given the assumption \eqref{eq:NonSingStrainT} on the determinant of the strain $\varepsilon(\b u)$ of the reference state, $\mathcal L_\mu$ is elliptic according to Corollary \ref{cor:AM}, and satisfies the Lopatinskii condition according to Proposition \ref{prop:APAMu}. Therefore, the condition 1 of Theorem~\ref{thm:Sol} is satisfied for $\mathcal A_\mu$, therefore the other two equivalent conditions 2-3 are also satisfied.

Theorem~\ref{thm:Sol}.2 implies the existence of a bounded operator $R_\mu: R(p,l)\to D(p,l)$ with 
	\begin{equation}
	\label{eq:Kern}
		\mathcal R_\mu\mathcal A_\mu=\mathcal I-\mathcal T_\mu
	\end{equation}
and compact $\mathcal T_\mu:R(p,l)\to D(p,l)$.
	
The spectral theory for compact operators asserts that the kernel of $\mathcal I-\mathcal T_\mu$ is finite-dimensional \cite[Ch.VII, Thm. 7.1]{Con90}. By \eqref{eq:Kern}, the kernel $K=\ker(\mathcal A_\mu)$ is a subspace of $\ker(\mathcal I-\mathcal T_\mu)$. Consequently, $K$ is finite-dimensional also, and hence closed. Therefore we can consider the quotient $D(p,l)/K$ as a Hilbert space.

Existence of the left regularizer in \eqref{eq:Kern} implies that $\ran(\mathcal A_\mu)$ is closed \cite[Ch.XI, Thm. 2.3(ii)]{Con90}. Therefore $\ran(\mathcal A_\mu)$ is a Hilbert space. We apply the open mapping theorem \cite[Ch.III, Thm. 12.1]{Con90} to find that the inverse $A_\mu^{-1}:\ran(\mathcal A_\mu)\to  D(p,l)/K$ is continuous. Therefore, we have the existence of a real number $C$ such that for all $(\delta \mu, \delta \b u)\in D(p,l)$, the estimate
\begin{equation}
\label{eq:IsoMetr1a}
\begin{aligned}
	\| (\delta \mu, \delta \b u) \|_{D(p,l)/K} &= \| \mathcal A_\mu^{-1} A_\mu (\delta \mu, \delta \b u) \|_{D(p,l)/K} \\
	& \leq C \| A_\mu (\delta \mu, \delta \b u) \|_{R(p,l)}= C\| (\mathcal S,\varphi)\|_{R(p,l)}.
\end{aligned}
\end{equation}
holds. By \eqref{eq:LinEqns}, we have $\delta\b K=\delta\b u$ in $\mathcal S$. We set $K=K_1\times K_2$ with $K_1\subset H^{l+1}(\Omega)$. Then from \eqref{eq:IsoMetr1a}, we obtain the estimate \eqref{eq:EstAMu} for $r=1$.

The case of $\mathcal A_\mu^{(r)}$ follows straight-forwardly by induction.\\

\end{proof}

\begin{remark}Theorem~\ref{thm:StabMu} gives the criterion \eqref{eq:NonSingStrainT}, which means that at each point, at least one of the measured elastic displacement fields has \emph{non-singular strain}. The requirement of such qualitative conditions for the solutions is typical for the coupled-physics literature. In fact, the condition \eqref{eq:NonSingStrainT} for $r=2$ is a generalization of the invertibility condition for the nonlinear reconstruction problem in elastography, which was found in the research of \cite{BalBelImpMon13}, namely 
\begin{equation}
\label{eq:BalInvert}
	\det (t_2\varepsilon(\b u_1)^D - t_1\varepsilon(\b u_2)^D) \neq  0.
\end{equation}
Here, we have for $k=1,2$ that $t_k:=\tr(\varepsilon(\b u_k))=\nabla\cdot \b u_k$ and that $\b \varepsilon(\b u_k)^D := \varepsilon(\b u_k)-\frac{t_k}{3}\b {Id}$ is the deviatoric part of the strain.
It can be verified by simple calculation that violation of \eqref{eq:NonSingStrainT} leads to violation of \eqref{eq:BalInvert}.

It is unknown whether \eqref{eq:BalInvert} can be ensured with two vector fields for every distribution of material parameters. For several parameter classes, existence of boundary conditions ensuring \eqref{eq:BalInvert} can be justified, see the discussion and examples in \cite[Sec.3.3]{BalBelImpMon13}. As \eqref{eq:NonSingStrainT} is a consequence of \eqref{eq:BalInvert}, the special argumentation for \eqref{eq:BalInvert} can also be invoked for arguing for the premise of Theorem \ref{thm:StabMu} in our case.

\end{remark}

We now give an explicit characterization of the kernel of the operator $\mathcal A_\mu^{(1)}$, which will be exploited in Corollary \ref{cor:InjAMu2} to show injectivity of $\mathcal A_\mu^{(2)}$, the operator corresponding to two measurements.

\begin{theorem}
\label{thm:KerMu}
Consider $\mathcal A_\mu^{(1)}$, and suppose that the condition \eqref{eq:NonSingStrainT} with $r=1$ holds. Then the estimate \eqref{eq:EstAMu} holds with a one-dimensional kernel $K_1$. The subspace $K_1$ is generated by the element
\begin{equation}
\label{eq:KerLMu1}
	\exp\int_{\b p}^\b x \b a(\b y)d\b y
\end{equation}
with fixed $\b p\in \Omega$. Here, the vector field $\b a(\b x)$ is uniquely determined by 
\begin{equation}
\label{eq:KerLMu2}
	\b a\cdot \varepsilon(\b u)_i=-\nabla\cdot \varepsilon(\b u)_i,\qquad i=1,2,3,
\end{equation}
where $\b u$ is a reference state for which \eqref{eq:NonSingStrainT} holds.
\end{theorem}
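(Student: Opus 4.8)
The plan is to compute $\ker\mathcal{A}_\mu^{(1)}$ directly: the interior‑data block of the system forces $\delta\mathbf{u}=0$ and thereby collapses the problem to a single first‑order linear relation for $\delta\mu$, which is rigid enough to leave an at most one‑dimensional solution space. First, an element $(\delta\mu,\delta\mathbf{u})\in\ker\mathcal{A}_\mu^{(1)}$ satisfies $\mathcal{L}_\mu(\delta\mu,\delta\mathbf{u})=0$ together with $\mathcal B\delta\mathbf{u}=0$. But the lower block of $\mathcal{L}_\mu$ in \eqref{eq:LMu} is simply $\delta\mathbf{u}$, so $\delta\mathbf{u}\equiv 0$ on $\Omega$ at once; with $\delta\mathbf{u}=0$ the upper block reduces to $\nabla\cdot(\delta\mu\,\varepsilon(\mathbf{u})_i)=0$ for $i=1,2,3$ (the $\rho(\delta\mathbf{u})_{tt}$ term vanishes, and is absent anyway in the quasi‑static setting \eqref{eq:QuasiStatic}), i.e.\ by the Leibniz rule
\[
	\nabla\delta\mu\cdot\varepsilon(\mathbf{u})_i=-\delta\mu\,\nabla\cdot\varepsilon(\mathbf{u})_i,\qquad i=1,2,3.
\]

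Next I would use the non‑degeneracy hypothesis \eqref{eq:NonSingStrainT}. Since $\varepsilon(\mathbf{u})$ is symmetric and, by assumption, pointwise invertible, these three scalar relations are equivalent to the single vector identity $\nabla\delta\mu=\delta\mu\,\mathbf{a}$, where $\mathbf{a}(\mathbf{x})=-\varepsilon(\mathbf{u})^{-1}\bigl(\nabla\cdot\varepsilon(\mathbf{u})_1,\nabla\cdot\varepsilon(\mathbf{u})_2,\nabla\cdot\varepsilon(\mathbf{u})_3\bigr)^{\top}$ is exactly the pointwise‑unique solution of \eqref{eq:KerLMu2}. A short regularity check — using $pl>n$, so that the Sobolev spaces in play are algebras and $\varepsilon(\mathbf{u})^{-1}$ stays in the right class — shows $\mathbf{a}$ has the claimed regularity and that any $H^{l+1}$ solution of $\nabla\delta\mu=\delta\mu\,\mathbf{a}$, paired with $\delta\mathbf{u}=0$, is an admissible kernel element.

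The dichotomy then follows from ODE uniqueness. Restricting $\nabla\delta\mu=\delta\mu\,\mathbf{a}$ to an arbitrary $C^1$ curve $\gamma$ in $\Omega$ turns it into a scalar linear ODE $(\delta\mu\circ\gamma)'=(\mathbf{a}\circ\gamma\cdot\dot\gamma)\,(\delta\mu\circ\gamma)$; by Gronwall's inequality, if $\delta\mu$ vanishes at one point it vanishes along every curve from that point, hence everywhere on the connected domain $\Omega$. So either $\delta\mu\equiv 0$, or $\delta\mu$ is nowhere zero and (again by connectedness) of constant sign; in particular $\dim K_1\le 1$. In the non‑trivial case set $w=\log|\delta\mu|$: then $\nabla w=\mathbf{a}$, so $\mathbf{a}$ is a gradient field, and integrating from a fixed $\mathbf{p}\in\Omega$ yields $\delta\mu(\mathbf{x})=\delta\mu(\mathbf{p})\exp\int_{\mathbf{p}}^{\mathbf{x}}\mathbf{a}(\mathbf{y})\,d\mathbf{y}$; conversely this function solves $\nabla\delta\mu=\delta\mu\,\mathbf{a}$ and hence lies in $\ker\mathcal{A}_\mu^{(1)}$, so $K_1$ is precisely the span of \eqref{eq:KerLMu1}. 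The estimate \eqref{eq:EstAMu} with this $K_1$ is then the one already produced by Theorem~\ref{thm:StabMu}.

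I expect the main obstacle to be the last step. A priori $\mathbf{a}$, though pointwise determined by \eqref{eq:KerLMu2}, need not be curl‑free, and \eqref{eq:KerLMu1} only makes sense when it is (otherwise $K_1=\{0\}$, i.e.\ the linearized problem is stable outright); one must argue that curl‑freeness is automatic \emph{on the kernel} — it follows from the mere existence of a nonzero solution $\delta\mu$, via the argument above — and one must justify path‑independence of $\int_{\mathbf{p}}^{\mathbf{x}}\mathbf{a}$ when $\Omega$ fails to be simply connected, which is exactly where the topological lemmas of the appendix are needed. The ODE‑uniqueness dichotomy is the other essential ingredient, but it becomes routine once \eqref{eq:NonSingStrainT} has been used to bring the equation into the form $\nabla\delta\mu=\delta\mu\,\mathbf{a}$.
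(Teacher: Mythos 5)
Your proposal is correct, and it reaches the same reduction as the paper -- $\delta\b u=0$ from the data block, hence $\nabla\delta\mu\cdot\varepsilon(\b u)_i=-\delta\mu\,\nabla\cdot\varepsilon(\b u)_i$, which under \eqref{eq:NonSingStrainT} is the single relation $\nabla\delta\mu=\delta\mu\,\b a$ with $\b a$ as in \eqref{eq:KerLMu2} -- but it handles the crucial dichotomy step by a genuinely different argument. Where you restrict the equation to curves and invoke Gronwall/ODE uniqueness to conclude that a kernel element vanishing at one point vanishes on all of the (path-)connected domain, the paper instead works on the connected component $V$ of the open set $\{\delta\mu\neq 0\}$, derives the representation \eqref{eq:KerLMu1} there, and then uses the two topological lemmas of the appendix to produce a boundary point $\b q\in\partial V$ with $\delta\mu(\b q)=0$, contradicting the strictly positive limit of the representation formula. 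Your Gronwall route is more elementary and in fact makes the appendix lemmas unnecessary; note that their role in the paper is precisely this component-exhaustion argument, not the path-independence of $\int_{\b p}^{\b x}\b a$, which you correctly observe is automatic in the nontrivial case because $\b a=\nabla\log|\delta\mu|$ is then globally a gradient. You also flag, correctly, a point the paper's proof glosses over: the argument only yields $\dim K_1\leq 1$, with equality exactly when $\b a$ is curl-free (equivalently, when \eqref{eq:KerLMu1} is well defined and actually solves the equation); otherwise $K_1=\{0\}$. This is a sharpening of, not a deviation from, the intended statement, and the rest of your write-up (uniqueness and continuity of $\b a$ via invertibility of $\varepsilon(\b u)$ and the Sobolev embedding, and the appeal to Theorem~\ref{thm:StabMu} for the estimate \eqref{eq:EstAMu}) matches the paper.
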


\begin{proof}
In the proof of the statement, we derive a representation for $\delta\mu$ on a connected set and then infer that the representation is valid on $\Omega$ by a topological argument.

Suppose, to begin with,
 that $(\delta\mu,\delta\b u)\in D(p,l)$ is in the kernel of $\mathcal A_\mu=\mathcal L_\mu\times \mathcal B$.

As we have $pl>n$, the Sobolev imbedding theorems (see \cite[Thm.5.4.C]{Ada75}) imply that $\delta\mu\in H^{l+1}(\Omega)$ is continuously differentiable on $\Omega$. In particular, we have that the set
\begin{equation}
\label{eq:ThmKerA}
	A = \{ \b x\in\Omega:\delta\mu(\b x)\neq 0\}
\end{equation}
is open in $\Omega$.

If $\delta \mu \equiv 0$, then the assertion is trivially satisfied. Otherwise, there exists 
a point $\b p \in A$. In this case, consider the connected component $V$ of $\b p$ in the topology of $A\subset\Omega$, that is 
\[
	V = \bigcup\{ U: \b p\in U\subset A \text { with } U \text{ connected in } A \}.
\]
Lemma~\ref{lem:top1} implies that $V\subset A\subset\mathbb{R}^n$ is open; therefore, $V$ is also path-connected.

Suppose now $\b x\in V$. We analyze the 6 equations 
\[
	\mathcal L_\mu(\delta\mu,\delta\b u)=
	\begin{pmatrix}
		2\nabla\cdot(\delta\mu\;\varepsilon(\b u))+2\nabla\cdot(\mu\;\varepsilon(\delta\b u))\\
		\delta\b u
	\end{pmatrix}=0\qquad\text{on } V.
\]
From the last three equations, we immediately get $\delta\b u|_V= 0$. From the first three equations, we then get that
\[
	\nabla\cdot(\delta\mu\;\varepsilon(\b u)) = 0\qquad\text{on } V 
\] 
for the element $\delta\mu$, and hence
\begin{equation}
\label{eq:KerPro1}
	\nabla \delta\mu\cdot\varepsilon(\b u)_i = -\delta \mu\nabla\cdot \varepsilon(\b u)_i\qquad i=1,2,3 \text{ on } V.
\end{equation}

Evaluating \eqref{eq:KerPro1} at the point $\b x\in V\subset A$ and dividing by $\delta\mu(\b x)$ (which, by \eqref{eq:ThmKerA}, is non-zero) shows that
\begin{equation}
\label{eq:KerPro2}
\frac{\nabla\delta\mu(\b x)}{\delta\mu(\b x)} = \nabla\log\delta\mu(\b x)=\b a(\b x),
\end{equation}
with $\b a$ determined by \eqref{eq:KerLMu2}.

Actually, the conditions \eqref{eq:NonSingStrainT} and \eqref{eq:KerLMu2} can be used to define $\b a(\b x)$ uniquely for $\b x\in\Omega$. To see this, set $\b b_i:=\varepsilon(\b u)_i$. By \eqref{eq:NonSingStrainT},  the vectors $\b b_i$ form a basis of $\mathbb{R}^3$. Then write
\begin{equation}
\label{eq:DarstA}
	 \b a = \sum_{i=1}^3\frac{1}{\| \b b_i'\|}(\b a\cdot\b b_i')\b b_i'.
\end{equation}
Here, the vectors $\b b_i'$ are obtained by Gram-Schmidt orthogonalization, i.e., $\b b_1':=\b b_1$, and 
\begin{equation}
\label{eq:DarstB}
	\b b_i' = \b b_i - \sum_{1\leq j<i}\frac{1}{\| \b b_j'\|}(\b b_i,\b b_j')\b b_j'.
\end{equation}
The scalar products in \eqref{eq:DarstA} can be represented as
\begin{equation}
\label{eq:DarstAdotB}
	\b a\cdot\b b_i' \stackrel{\eqref{eq:DarstB}}{=} \b a\cdot\b b_i - \sum_{1\leq j<i}\frac{1}{\| \b b_j'\|}(\b b_i,\b b_j')(\b a,\b b_j').
\end{equation}

In \eqref{eq:KerLMu2}, the scalar products $\b a\cdot\b b_i$ are specified for $i=1,2,3$. Now using \eqref{eq:DarstAdotB}, we see by a simple induction argument that \eqref{eq:KerLMu2} determines $\b a\cdot \b b_i'$ in \eqref{eq:DarstA}; thus we can determine $\b a$ uniquely on $\Omega$.

Furthermore, observe that $\b u\in (H^{l+2}(\Omega))^3$ implies $\b b_i\in  (H^{l+1}(\Omega))^3$ and 
\[
	\b a\cdot\b b_i\stackrel{\eqref{eq:KerLMu2}}{=}-\nabla\cdot\b b_i\in  H^{l}(\Omega).
\]
	Using this, as well as \eqref{eq:DarstA}, \eqref{eq:DarstAdotB}, the inequality $pl> n$ and the Sobolev embedding theorem \cite[Thm.5.4.C]{Ada75}, we obtain that $\b a$ is continuous on $\Omega$.
	
Now consider the vector field $\b a$ and calculate the path integral from $\b p$ to $\b x$ to find
\[
	\int_{\b p}^{\b x}\b a(\b y)d\b y\stackrel{\eqref{eq:KerPro2}}{=}
	\int_{\b p}^{\b x} \nabla\log\delta\mu(\b y)d\b y = 
	\log\delta\mu(\b x)-\log\delta\mu(\b p)=\log\bigl(\frac{\delta\mu(\b x)}{\delta\mu(\b p)}\bigr).
\]
From this identity, we have the representation 
\begin{equation}
\label{eq:KerPro3}
	\delta\mu(\b x)=\delta\mu(\b p)\exp\int_{\b p}^{\b x}\b a(\b y)d \b y,\qquad\b x\in V
\end{equation}
for the values $\delta\mu$ on the set $V\subset A\subset \Omega$. Note that the function on the right hand side of \eqref{eq:KerPro3} is continuous and defined on the whole domain $\Omega$.

We now claim that actually, we have
\begin{equation}
\label{eq:ThmKerOmega}
V=\Omega,
\end{equation}
such that the representation formula \eqref{eq:KerPro3} holds for $\b x\in\Omega$.

Assume, on the contrary, that $V\varsubsetneq\Omega$. We then also have that 
\[
	A\varsubsetneq\Omega
\]
 (otherwise $\Omega=A=V\cup V_1$, with $V_1=A\setminus V$ open and nontrivial, $V\cap V_1=\{\}$, so $\Omega$ would not be connected).

Therefore, the assumptions of Lemma~\ref{lem:top2} are satisfied. Consequently, there exists a point $\b q\in \partial V\setminus A$, where $\partial V$ is the boundary of $V$ in $\Omega$. As $\b q\not\in A$, we have, by \eqref{eq:ThmKerA}, that 
\begin{equation}
\label{eq:ThmKerQ0}
	\delta\mu(\b q)=0.
\end{equation}
As $\b q\in\partial V$, there exists a sequence $\b v_n\in V$ with
\begin{equation}
\label{eq:ThmKerSeq}
	\b v_n\to\b q\qquad\text{in }\Omega.
\end{equation}
By the representation \eqref{eq:KerPro3}, together with \eqref{eq:ThmKerSeq}, we have 
\begin{equation}
\label{eq:ThmKerConvNon0}
	\delta\mu(\b v_n)\stackrel{\eqref{eq:KerPro3}}{=}\delta\mu(\b p)\exp\int_{\b p}^{\b v_n}\b a(\b y)d \b y\to 		\underbrace{\delta\mu(\b p)}_{\neq \;0}\underbrace{\exp\int_{\b p}^{\b q}\b a(\b y)d \b y}_{>\; 0}\neq 0
\end{equation}
On the other hand, continuity of $\delta\mu$, equation \eqref{eq:ThmKerQ0} and \eqref{eq:ThmKerSeq} imply that
\begin{equation}
\label{eq:ThmKerConv0}
\delta\mu(\b v_n)\to 0.
\end{equation}
As \eqref{eq:ThmKerConvNon0} and \eqref{eq:ThmKerConv0} contradict each other, we infer that the assumption $V\varsubsetneq\Omega$ is wrong. Therefore, as asserted in \eqref{eq:ThmKerOmega}, $V=\Omega$ holds.

Therefore, for any $(\delta\mu,\delta\b u)\in \ker \mathcal A_\mu=K_1\times\{0\}$ with $\delta\mu\neq 0$,  the representation of $\delta\mu$ in \eqref{eq:KerPro3} is valid for $\b x\in\Omega$.  This shows that \eqref{eq:KerLMu1} is a generating element for $K_1$. Therefore $\dim(K_1)= 1$.
\end{proof}

\begin{cor}
\label{cor:InjAMu2}
Let $\b u_1\neq \b u_2$ be two quasi-static elastic deformations satisfying~\eqref{eq:Modell} with different force terms $\b F_1, \b F_2$. Let the condition \eqref{eq:NonSingStrainT} hold.

As described in~\eqref{eq:SystemMu2}, let $\mathcal A_\mu^{(2)}(\delta\mu,\delta\b u_1,\delta\b u_2)$ be the corresponding linearized operator. Then we have that 
\[
	\ker (\mathcal A_\mu^{(2)})=\{(0,0,0)\}.
\]
\end{cor}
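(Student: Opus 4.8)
The plan is to reduce the statement to a claim about $\delta\mu$ alone and then play the single-measurement kernel description of Theorem~\ref{thm:KerMu} off against itself for the two reference displacements.

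\emph{Step 1 (reduction).} Let $(\delta\mu,\delta\b u_1,\delta\b u_2)\in\ker(\mathcal A_\mu^{(2)})$. The last three scalar components of each block $\mathcal L_\mu^{(k)}(\delta\mu,\delta\b u_k)$ in \eqref{eq:SystemMu2} are literally $\delta\b u_k$, so they force $\delta\b u_1\equiv\delta\b u_2\equiv 0$ on $\Omega$. Feeding this back into the first three components of each block, and using the quasi-static hypothesis \eqref{eq:QuasiStatic}, one is left with
\[
	\nabla\cdot\bigl(\delta\mu\,\varepsilon(\b u_1)\bigr)=0\qquad\text{and}\qquad \nabla\cdot\bigl(\delta\mu\,\varepsilon(\b u_2)\bigr)=0\qquad\text{on }\Omega .
\]
Thus $(\delta\mu,0)$ lies in the kernel of the one-measurement operator $\mathcal A_\mu^{(1)}$ built from $\b u_1$ and, simultaneously, in the one built from $\b u_2$, and it remains to show that the intersection of these two kernels, read off in the $\delta\mu$ slot, is $\{0\}$.

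\emph{Step 2 (apply the kernel formula).} Suppose $\delta\mu\not\equiv 0$ and derive a contradiction. By \eqref{eq:NonSingStrainT}, at least one of $\varepsilon(\b u_1),\varepsilon(\b u_2)$ is nonsingular at each point, so Theorem~\ref{thm:KerMu} and the connectedness/unique-continuation argument behind it (Lemmas~\ref{lem:top1}--\ref{lem:top2}) apply to each reference displacement whose strain is nonsingular on all of $\Omega$; on the overlap one still obtains locally that $\nabla\log\delta\mu$ equals the field $\b a_k$ fixed by \eqref{eq:KerLMu2} with $\b u=\b u_k$. Since $\delta\mu$ never vanishes (this is exactly the conclusion $A=V=\Omega$ of Theorem~\ref{thm:KerMu}), we get $\b a_1\equiv\b a_2$ on $\Omega$ and $\delta\mu=\delta\mu(\b p)\exp\int_{\b p}^{\b x}\b a_1(\b y)\,d\b y$, with $\b a_1$ solving \eqref{eq:KerLMu2} \emph{both} for $\b u_1$ and for $\b u_2$.

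\emph{Step 3 (rule out a common kernel direction) and the main obstacle.} It remains to see that $\b a_1\equiv\b a_2$ is incompatible with the hypotheses, and this is the real content of the corollary: the kernel description of Theorem~\ref{thm:KerMu} is blind to which displacement produced $\delta\mu$, so one must re-insert the elasticity PDE \eqref{eq:Modell} (quasi-static, forces $\b F_1,\b F_2$). Rewriting $\nabla\cdot\varepsilon(\b u_k)_i$ through that equation turns \eqref{eq:KerLMu2} into $\varepsilon(\b u_k)\bigl(\b a_k-\nabla\log\mu\bigr)=\tfrac{1}{2\mu}\bigl(\nabla p_k-\b F_k\bigr)$ for $k=1,2$; with $\b a_1=\b a_2$ the left factors coincide, so the single field $\b b:=\b a_1-\nabla\log\mu$ is mapped by $\varepsilon(\b u_1)$ to $\tfrac{1}{2\mu}(\nabla p_1-\b F_1)$ and by $\varepsilon(\b u_2)$ to $\tfrac{1}{2\mu}(\nabla p_2-\b F_2)$. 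Since at each point at least one $\varepsilon(\b u_k)$ is invertible, $\b b$ is thereby determined, and the remaining three relations then over-determine the pair $(\b u_1,\b u_2)$; the hypothesis that the two experiments are driven by genuinely \emph{different} forces (so in particular $\b u_1\neq\b u_2$ and the data are not proportional) is what is needed to force these to be inconsistent, giving $\delta\mu\equiv 0$ and hence $\ker(\mathcal A_\mu^{(2)})=\{(0,0,0)\}$. Making this last inconsistency quantitative — in particular excluding the degenerate case $\b u_2=c\,\b u_1$, which is exactly why two distinct forces are assumed — is the delicate point, whereas Steps~1--2 are bookkeeping plus a direct citation of Theorem~\ref{thm:KerMu}.
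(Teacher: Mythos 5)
Your Steps 1--2 match the paper's reduction: the interior-data rows force $\delta\b u_1=\delta\b u_2=0$, leaving $\nabla\cdot(\delta\mu\,\varepsilon(\b u_k))=0$ for $k=1,2$, and the representation formula of Theorem~\ref{thm:KerMu} shows that a nonzero $\delta\mu$ in the kernel is of the form $\delta\mu(\b p)\exp\int_{\b p}^{\b x}\b a$ and hence nowhere vanishing (of constant sign) on $\Omega$. But your Step 3 has a genuine gap: you reduce the problem to showing that the two compatibility conditions $\varepsilon(\b u_k)\ast\b a=-\nabla\cdot\varepsilon(\b u_k)$, $k=1,2$, cannot hold for a common field $\b a$, and then you concede that establishing this ``inconsistency'' --- in particular excluding degenerate configurations --- is the delicate point you have not carried out. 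That unproved inconsistency \emph{is} the content of the corollary, so the argument as written does not close. (A secondary issue: under \eqref{eq:NonSingStrainT} with $r=2$ only one of the two strains need be nonsingular at each point, so the field $\b a_k$ of \eqref{eq:KerLMu2} is not even globally defined for each $k$ separately, which undermines the assertion $\b a_1\equiv\b a_2$ on all of $\Omega$.)

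The paper closes the proof by an entirely different and much shorter mechanism, which you should adopt. Subtract the two interior equations to get
\begin{equation*}
\nabla\cdot\bigl(\delta\mu\;\varepsilon(\b u_1-\b u_2)\bigr)=0\quad\text{on }\Omega,\qquad (\b u_1-\b u_2)|_{\partial\Omega}=0,
\end{equation*}
i.e.\ $\b w:=\b u_1-\b u_2$ solves the \emph{homogeneous Dirichlet elastostatic problem with $\delta\mu$ playing the role of the shear modulus}. Since the kernel representation gives $\delta\mu$ of constant sign with $\essinf_\Omega|\delta\mu|>0$, the forward uniqueness theorem \cite[Thm.~5.2]{MclThoYoo11} applies and yields $\b w=0$, i.e.\ $\b u_1=\b u_2$, contradicting the hypothesis $\b u_1\neq\b u_2$. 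No analysis of the over-determined system for $\b a$ is needed; the hypothesis of two distinct deformations enters only through this contradiction.
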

\begin{proof}
Let $(\delta\mu,\delta\b u_1,\delta\b u_2)\in \ker (\mathcal A_\mu^{(2)})$.

From \eqref{eq:SystemMu2}, we immediately get that $\delta\b u_1=\delta \b u_2=0$ on $\overline\Omega$. The other equations in \eqref{eq:SystemMu2} yield
\[
\begin{aligned}
	\nabla\cdot(\delta\mu\;\varepsilon(\b u_1))&=0\\
	\nabla\cdot(\delta\mu\;\varepsilon(\b u_2))&=0.
\end{aligned}
\]
such that together with the boundary data we have
\begin{equation}
\label{eq:DiffKerEqn}
\begin{aligned}
	\nabla\cdot(\delta\mu\;\varepsilon(\b u_1-\b u_2))&=0\\
	(\b u_1-\b u_2)|_{\partial\Omega} &=0.
	\end{aligned}	
\end{equation}
Suppose that $\delta\mu\neq 0$. Then there exists a point $\b p\in\Omega$ with $\delta\mu(\b p)\neq 0$. As in the proof of Theorem~\ref{thm:KerMu}, where we derived the representation formula \eqref{eq:KerPro3} for $\b x\in\Omega$, there exists a certain vector field $\b a$ such that
\begin{equation}
	\delta\mu(\b x)=\delta\mu(\b p)\exp\int_{\b p}^{\b x}\b a(\b y)d \b y,\qquad\b x\in \Omega.
\end{equation}
This implies that $\delta\mu(\b x)>0$ for all $\b x\in\Omega$. Therefore, the condition $\essinf_\Omega\mu=\essinf_\Omega\mu_0>~0$ in \cite[(2.2)]{MclThoYoo11} is satisfied.

The uniqueness result \cite[Thm.5.2]{MclThoYoo11} then implies that, from \eqref{eq:DiffKerEqn}, we have that $\b u_1=\b u_2$. But this is contradiction to our assumption.

Therefore, we have $\ker (\mathcal A_\mu^{(2)})=\{(0,0,0)\}$.
\end{proof}

In the subsequent part of the section we give the stability criteria for the operators $\mathcal A_\rho$, $\mathcal A_p$ and $\mathcal A_\lambda$.

\begin{theorem}
\label{thm:StabRho}
The operator $\mathcal A_\rho^{(r)}$ for $r$ measurements has a left regularizer on any smooth subdomain $W\subset\Omega\times [0,T]$ precisely when, for all $(\b x, t)\in{\overline W}$,
\begin{equation}
\label{eq:NonVanishingOfAcceleration}
	(\b u_k)_{tt}(\b x, t)\neq 0\qquad \text{for at least one measurement } 1\leq k\leq r.
\end{equation}
One has the stability estimate
\begin{equation}
\label{eq:EstARho}
	 \| \delta \rho \|_{H^{l+1}(W)} \leq \;C \sum_{k=1}^r \| \delta\b u_k \|_{H^{l+2}(W)} 	
\end{equation}
\end{theorem}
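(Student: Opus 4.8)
The plan is to follow the template of the proof of Theorem~\ref{thm:StabMu}, since $\mathcal A_\rho^{(r)}$ fits the same framework. First I would identify the relevant Douglis--Nirenberg data for the single-measurement operator $\mathcal A_\rho$: the variables are $(\delta\rho,\delta u_1,\delta u_2,\delta u_3)$ with $(t_j)=(0,2,2,2)$ and $(s_i)_{i=1}^6=(0,0,0,-2,-2,-2)$, and $(\sigma_k)_{k=1}^3=(-2,-2,-1)$, so that by \eqref{eq:Dpl} and \eqref{eq:Rpl} the operator acts $D(p,l)=H^{l}(\Omega)\times(H^{l+2}(\Omega))^3\to R(p,l)=H^{l}(\Omega)^3\times\cdots$ on the spatio-temporal domain $W$. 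The principal symbol is \eqref{eq:princLRho}.

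Next I would invoke the ellipticity and covering results already proved: by Corollary~\ref{cor:ARho}, $\mathcal L_\rho$ (and by induction $\mathcal L_\rho^{(r)}$) is elliptic on $\overline W$ precisely when \eqref{eq:NonVanishingOfAcceleration} holds; by Proposition~\ref{prop:ARho}, the Lopatinskii covering condition holds with arbitrary boundary data under the same hypothesis $\b u_{tt}\neq 0$ on the boundary of the subdomain. Hence condition~1 of Theorem~\ref{thm:Sol} is met, so condition~2 gives a left regularizer $\mathcal R_\rho$ with $\mathcal R_\rho\mathcal A_\rho=\mathcal I-\mathcal T_\rho$, $\mathcal T_\rho$ compact. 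Conversely, if \eqref{eq:NonVanishingOfAcceleration} fails at some point, ellipticity fails there, so by the equivalence in Theorem~\ref{thm:Sol} no left regularizer can exist; this gives the ``precisely when.''

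For the estimate, the left regularizer forces $\ker\mathcal A_\rho$ finite-dimensional and $\ran\mathcal A_\rho$ closed (spectral theory of compact operators and \cite[Ch.XI, Thm.2.3]{Con90}), so the open mapping theorem yields $\|(\delta\rho,\delta\b u)\|_{D(p,l)/K}\leq C\|(\mathcal S,\varphi)\|_{R(p,l)}$ with $\mathcal S=(0,0,0,\delta\b K)$ and $\varphi=0$. The key extra point, which distinguishes this from Theorem~\ref{thm:StabMu}, is that here $t_j=0$ for the $\delta\rho$-component, so the term $C_2\sum_{t_j>0}\|u_j\|_{L^p}$ in \eqref{eq:SolloEst} involves only the $\delta\b u$-components, which are controlled by the data $\delta\b K=\delta\b u$ themselves. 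Consequently the kernel contribution on the $\delta\rho$-slot vanishes and one gets the \emph{genuine} (quotient-free) estimate \eqref{eq:EstARho} rather than an estimate modulo a kernel. I would make this explicit: projecting \eqref{eq:SolloEst} onto the $\delta\rho$-component and using $\delta\b u=\delta\b K$ in the interior data gives $\|\delta\rho\|_{H^{l+1}(W)}\leq C\sum_k\|\delta\b u_k\|_{H^{l+2}(W)}$ directly. The $r$-measurement case follows by the same induction on the number of experiments as in Theorem~\ref{thm:StabMu}.

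The main obstacle I anticipate is purely bookkeeping: verifying that with $t_j=0$ on the density slot the output space $R(p,l)$ and the regularity hypotheses $p(l-s_i)>n$, $p(l-\sigma_k)>n$ of Theorem~\ref{thm:Sol} are still satisfied for the chosen $p=2$, $l>0$, $pl>n$, and confirming that the a-priori estimate \eqref{eq:SolloEst} indeed has no $\|\delta\rho\|_{L^p}$ term on its right-hand side so that the estimate is kernel-free. The geometric/analytic content is entirely supplied by Corollary~\ref{cor:ARho}, Proposition~\ref{prop:ARho} and Theorem~\ref{thm:Sol}; no new PDE argument is needed.
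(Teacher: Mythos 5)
Your proposal is correct and follows essentially the same route as the paper's proof: the same Douglis--Nirenberg data $(t_j)=(0,2,2,2)$, $(s_i)=(0,0,0,-2,-2,-2)$, the same appeal to Corollary~\ref{cor:ARho} and Proposition~\ref{prop:ARho} to verify condition~1 of Theorem~\ref{thm:Sol} (with the converse direction of the equivalence giving the ``precisely when''), and the same observation that the remainder term $C_2\sum_{t_j>0}\| u_j\|_{L^p}$ in \eqref{eq:SolloEst} omits the $\delta\rho$-slot because $t_1=0$, which is exactly why the estimate is kernel-free, followed by induction over the $r$ measurements. The only point the paper adds that you omit is a remark that Solonnikov's criterion, stated for domains with $C^{l+\max t_j}$ boundary, extends to the cylindrical domain $\Omega\times[0,T]$ since the boundary regularity enters his proof only through the construction of a partition of unity; as your argument is confined to the smooth subdomains $W$ appearing in the statement, this is not a gap.
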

\begin{proof}
We first treat $\mathcal A_\rho$. The case of $\mathcal A_\rho^{(r)}$ follows by induction.

The stability criterion \label{eq:SolEst} in Theorem \ref{thm:Sol} is established for domains with $C^{l+\max t_j}$ boundary. Upon careful checking of the proof \cite[\S 6]{Sol73}, the only place where this assumption enters is the existence of a partition of unity. Now our domain is $\Omega\times[0,T]$, The construction of a partition of unity easily generalizes to cylindrical domains $\Omega\times[0,T]$, where $\Omega$ has $C^{l+\max t_j}$ boundary. Therefore, we can apply Theorem \ref{thm:Sol} to the problems with cylindrical domains.

The ellipticity condition has been assured in Corollary \ref{cor:ARho}, and the Lopatinskii condition is satisfied according to Proposition \ref{prop:ARho}. The assumptions in these results give the requirement $\b u_{tt}(\b x,t)_{\overline W}\neq 0$. With that, the equivalent conditions of Theorem \ref{thm:Sol} are fulfilled and we apply the result as in the proof of Theorem \ref{thm:StabMu}.

There appears no kernel in \eqref{eq:EstARho} for the following reason: The Douglis-Nirenberg numbers for the operator $\mathcal A_\rho$ are $(t_j)_{j=1}^{4}=(0,2,2,2)$ and $(s_i)_{i=1}^6=(0,0,0,-2,-2,-2)$. On the right hand side of estimate \eqref{eq:SolloEst}, only the variables with $t_j>0$ appear, which are in this case $\delta \b u_k$ for $k=1,2,3$.
\end{proof}

\begin{theorem}
\label{thm:StabP}
The operator $\mathcal A_p$ in \eqref{eq:SystemP}, considered in the stationary case, has a left regularizer on $\Omega$, and we have the estimate
\begin{equation}
\label{eq:EstAP}
	 \| \delta p \|_{H^{l+1}(\Omega)/K_3} \leq \;C \| \delta\b u \|_{H^{l+2}(\Omega)}.
\end{equation}
Here, the kernel $K_3$ consists of the (one-dimensional) space of constant functions on $\Omega$.
\end{theorem}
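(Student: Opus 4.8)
The plan is to run the same machinery as in the proof of Theorem~\ref{thm:StabMu}, since the ellipticity and covering facts for $\mathcal{L}_p$ are already available. First I would fix the Douglis--Nirenberg data attached to the variables $(\delta p,\delta u_1,\delta u_2,\delta u_3)$ of $\mathcal{L}_p$ in the stationary case, namely $(t_j)_{j=1}^4=(1,2,2,2)$ and $(s_i)_{i=1}^6=(0,0,0,-2,-2,-2)$, together with $(\sigma_k)_{k=1}^3=(-2,-2,-2)$ coming from the Dirichlet operator $\mathcal{B}(\delta\b u)=\delta\b u|_{\partial\Omega}$. With $p=2$ this identifies, via \eqref{eq:Dpl}--\eqref{eq:Rpl}, the spaces
\[
	D(p,l)=H^{l+1}(\Omega)\times (H^{l+2}(\Omega))^3,\qquad
	R(p,l)=H^{l}(\Omega)\times (H^{l+2}(\Omega))^3\times (H^{l+3/2}(\partial\Omega))^3,
\]
on which $\mathcal{A}_p=\mathcal{L}_p\times\mathcal{B}$ acts boundedly, and the principal symbol of $\mathcal{L}_p$ is \eqref{eq:princLP}.

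Next, by Corollary~\ref{cor:AL} the operator $\mathcal{L}_p$ is elliptic everywhere on $\Omega$, and by Proposition~\ref{prop:APAMu} the pair $(\mathcal{L}_p,\mathcal{B})$ satisfies the Lopatinskii covering condition with arbitrary boundary data; hence statement~1 of Theorem~\ref{thm:Sol} holds, and therefore so do the equivalent statements~2 and~3. In particular there is a bounded left regularizer $\mathcal{R}_p$ with $\mathcal{R}_p\mathcal{A}_p=\mathcal{I}-\mathcal{T}_p$ and $\mathcal{T}_p$ compact. Arguing exactly as in Theorem~\ref{thm:StabMu} (spectral theory of compact operators for the finite-dimensionality and closedness of $\ker\mathcal{A}_p$, closedness of $\ran\mathcal{A}_p$, and the open mapping theorem), one obtains a constant $C$ with $\|(\delta p,\delta\b u)\|_{D(p,l)/\ker\mathcal{A}_p}\le C\,\|(\mathcal{S},\varphi)\|_{R(p,l)}$. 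Since in the defining relation of $\mathcal{A}_p$ one has $\varphi=0$ and $\mathcal{S}=(0,0,0,\delta\b K)^\top$ with $\delta\b K=\delta\b u$ by \eqref{eq:LinEqns}, writing $\ker\mathcal{A}_p=K_3\times\{0\}$ with $K_3\subset H^{l+1}(\Omega)$ yields the estimate \eqref{eq:EstAP}.

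It then remains to identify $K_3$, which is the only genuinely new computation and is short precisely because $\mathcal{L}_p$ contains no $\delta\mu\,\varepsilon(\b u)$ coupling term. If $(\delta p,\delta\b u)\in\ker\mathcal{A}_p$, the last three (interior-data) equations force $\delta\b u\equiv 0$ on $\Omega$; inserting this into the first three equations of $\mathcal{L}_p$ in \eqref{eq:LP} and using $(\delta\b u)_{tt}=0$ from \eqref{eq:QuasiStatic} leaves $\nabla\delta p=0$ on $\Omega$, so $\delta p$ is constant since $\Omega$ is connected. Conversely, any constant $\delta p$ together with $\delta\b u=0$ manifestly lies in $\ker\mathcal{A}_p$. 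Hence $K_3$ is exactly the one-dimensional space of constant functions on $\Omega$. I do not anticipate any real obstacle; the only points demanding care are the bookkeeping of the Douglis--Nirenberg exponents and the induced Sobolev indices, handled just as in Theorem~\ref{thm:StabMu}.
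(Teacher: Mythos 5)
Your proposal is correct and follows essentially the same route as the paper: the stability estimate with finite-dimensional kernel is obtained by repeating the argument of Theorem~\ref{thm:StabMu} (using Corollary~\ref{cor:AL} and Proposition~\ref{prop:APAMu} to verify condition~1 of Theorem~\ref{thm:Sol}), and the kernel is identified by observing that the interior-data equations force $\delta\b u=0$, whence $\nabla\delta p=0$ and $\delta p$ is constant on the connected domain $\Omega$. The extra bookkeeping of the Douglis--Nirenberg exponents and the explicit converse inclusion of constants in $K_3$ are fine and consistent with the paper.
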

\begin{proof}
The proof of the stability estimate with a finite-dimensional kernel is the same as in Theorem \ref{thm:StabMu}.

Suppose that $(\delta\mu,\delta\b u)$ is in the kernel $K=\ker(\mathcal A_p)=\mathcal L_p\times \mathcal B$. Consideration of the equation 
\[
	\mathcal L_p(\delta p,\delta \b u)=\begin{pmatrix}\nabla \delta p + 2\nabla\cdot(\mu\;\varepsilon(\delta\b u))-\rho(\delta\b u)_{tt}  \\  \delta \b u\end{pmatrix}=0\qquad\text{on } \Omega
\]
shows that $\delta\b u=0$, and consequently $\nabla\delta p=0$. Therefore, we have $K=K_3\times\{0\}$, with $K_3$ the constant functions on $\Omega$.
\end{proof}

Note that, with the same method, but using Remark \ref{rem:PL}, one obtains a conditional stability result for the operator $\mathcal A_\lambda$:

\begin{theorem}
\label{thm:StabLam}
The operator $\mathcal A_\lambda^{(r)}$ corresponding to $r$ measurements, considered in the stationary case, has a left regularizer on $\Omega$ precisely when, for all $\b x\in\Omega$,
\begin{equation}
\label{eq:CondIncompressibility}
	\nabla\cdot\b u_k(\b x)\neq 0\qquad\text{for at least one measurement } 1\leq k\leq r,
\end{equation}
Then the stability estimate
\begin{equation}
\label{eq:EstALambda}
	 \| \delta \lambda \|_{H^{l+1}(\Omega)/K_4} \leq \;C \sum_{k=1}^r\| \delta\b u_k \|_{H^{l+2}(\Omega)} 	
\end{equation}
holds for a finite-dimensional kernel $K_4$.
\end{theorem}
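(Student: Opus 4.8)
The plan is to mirror the proof of Theorem~\ref{thm:StabMu}, since Theorem~\ref{thm:StabLam} is the exact analogue for the operator $\mathcal A_\lambda$ arising from the unmodified model \eqref{eq:Modell}. First I would recall from Remark~\ref{rem:PL} that, in the quasi-static case, the operator $\mathcal L_\lambda$ in \eqref{eq:LLambda} has principal symbol \eqref{eq:SymbL}, whose first column is $\i\gb\xi_s\,(\nabla\cdot\b u)$ and whose last three columns form the $3\times 3$ identity block. Hence, exactly as in Corollary~\ref{cor:AL}, this symbol has maximal rank $4$ for all $\gb\xi\neq 0$ precisely when $\nabla\cdot\b u(\b x)\neq 0$; for $r$ measurements one adds three columns and six rows per experiment, and the argument by induction used in Corollary~\ref{cor:AM} shows that the stacked symbol has full rank at $\b x$ iff $\nabla\cdot\b u_k(\b x)\neq 0$ for at least one $k$. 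This gives the ellipticity half of the criterion \eqref{eq:CondIncompressibility}.

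Next I would verify the Lopatinskii covering condition. Because the principal symbol \eqref{eq:SymbL} has the same structure as \eqref{eq:princLP} --- a single nontrivial first column proportional to $\i\gb\xi_s$ (here scaled by the scalar $\nabla\cdot\b u(\b y)$) together with an identity block --- the computation in the proof of Proposition~\ref{prop:APAMu} carries over verbatim: the last three equations of \eqref{eq:Lopat} force $\tilde u_2(z)=\tilde u_3(z)=\tilde u_4(z)=0$, and the remaining equations become $(\nabla\cdot\b u(\b y))\,(\i\zeta_j+\nu_j\frac{d}{dz})\tilde u_1(z)=0$ for $j=1,2,3$. As long as $\nabla\cdot\b u(\b y)\neq 0$ on $\partial\Omega$ --- which is implied by the hypothesis \eqref{eq:CondIncompressibility} on all of $\overline\Omega$ --- these reduce to the same system as \eqref{eq:LopatMu}, whose only solutions are $e^{\i\lambda z}$ with $\lambda=\zeta_j/\nu_j\in\mathbb{R}$, none of which decays as $z\to\infty$; hence the covering space is trivial with arbitrary boundary data. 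For $r$ measurements the reduction works blockwise.

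With ellipticity and the Lopatinskii condition established under \eqref{eq:CondIncompressibility}, condition~1 of Theorem~\ref{thm:Sol} holds, and therefore so do conditions~2 and~3. I would then repeat the functional-analytic argument from the proof of Theorem~\ref{thm:StabMu} almost word for word: from the left regularizer $\mathcal R_\lambda\mathcal A_\lambda^{(r)}=\mathcal I-\mathcal T_\lambda$ with $\mathcal T_\lambda$ compact, the kernel $K_4=\ker(\mathcal A_\lambda^{(r)})$ is finite-dimensional and hence closed, $\ran(\mathcal A_\lambda^{(r)})$ is closed, and the open mapping theorem applied to the quotient $D(p,l)/K_4$ yields a constant $C$ with $\|(\delta\lambda,\delta\b u_1,\dots,\delta\b u_r)\|_{D(p,l)/K_4}\le C\|(\mathcal S,\varphi)\|_{R(p,l)}$; using $\delta\b K_k=\delta\b u_k$ in $\mathcal S$ and splitting $K_4$ off the $\delta\lambda$-component gives \eqref{eq:EstALambda}. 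Conversely, if \eqref{eq:CondIncompressibility} fails at some $\b x_0\in\Omega$, then by the ellipticity analysis above $\mathcal L_\lambda$ loses rank there, so condition~1 of Theorem~\ref{thm:Sol} fails and, by the equivalence, no left regularizer exists --- giving the "precisely when". The only genuinely new point compared to Theorem~\ref{thm:StabMu} is the bookkeeping of the Douglis--Nirenberg numbers $(t_j)=(1,2,2,2)$, $(s_i)=(0,0,0,-2,-2,-2)$ and the regularity classes (with $\delta\lambda\in H^{l+1}(\Omega)$, matching that $\mathcal L_\lambda$ differentiates $\delta\lambda$ once, just as $\mathcal L_p$ does with $\delta p$); I expect no real obstacle here, only the need to note that the hypotheses on $\lambda$, $\mu$, $\rho$ in the reference state must be strong enough (e.g.\ H\"older, via \cite[Thm.~8.1]{MclThoYoo11}) for $\b u_k$ to enjoy the stated Sobolev regularity, exactly as in the setup preceding Theorem~\ref{thm:StabMu}.
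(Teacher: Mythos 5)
Your proposal is correct and follows exactly the route the paper intends: the paper gives no separate proof of Theorem~\ref{thm:StabLam}, stating only that it follows ``with the same method, but using Remark~\ref{rem:PL}'', and your argument is precisely that method spelled out --- ellipticity and the Lopatinskii condition from the symbol \eqref{eq:SymbL} under $\nabla\cdot\b u_k\neq 0$, followed by the functional-analytic argument of Theorem~\ref{thm:StabMu}. No discrepancies with the paper's approach.
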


\begin{table}
\renewcommand{\arraystretch}{3}
\begin{tabular}{l|l|l|l|l}
\hline
operator & 	$\mathcal A_\mu$ & $\mathcal A_\rho$ & $\mathcal A_p$	& $\mathcal A_\lambda$  \\
\hline
ellipticity condition &$\det(\varepsilon(\b u_k))\neq 0$	 &  $(\b u_k)_{tt}\neq 0$ & -- & $\nabla\cdot \b u_k \neq 0$   \\
\hline
\end{tabular}
\caption{Conditions for the reference state in Theorems \ref{thm:StabMu}, \ref{thm:StabRho}, \ref{thm:StabP}, \ref{thm:StabLam} to hold for every point for at least one displacement field $\b u_k$, $1\leq k\leq s$ in an imaging experiment in elastography}
\label{tab:EllCond}
\end{table}

\section{Discussion}
\label{sec:Disc}
\begin{enumerate}

\item 
The theorems show that the interior information $\b u$ provided in elastography makes the reconstruction of the biomechanical parameters $\mu$, $\rho$, as well as reconstruction of $p$, stable. We obtained criteria for the ellipticity of the linearizations $\mathcal A_p$, $\mathcal A_\mu$ and $\mathcal A_\rho$ of the quantitative elastography problems defined in Section~\ref{sec:Setting}, see table \ref{tab:EllCond}.

In the research for coupled-physics conductivity problems, ellipticity has been investigated theoretically and numerically, and found to yield optimal stability estimates, avoid blurring effects, accurate reconstruction of edges, and absence of propagation of singularities \cite{KucSte12,Kuc12,KucKun11,Bal12b,MonSte13,BalHofKnu14}.

Note that failure of ellipticity in our cases entails non-existence of a left regularizer non-existence of a left regularizer is equivalent to either $\dim\ker(\mathcal A)=\infty$ or the range of $\mathcal A$ not being closed for the particular Sobolev spaces involved \cite[XI,Thm. 2.3]{Con90}. This does not mean that necessarily, the linearized problem will be unstable for all data in any function space. For example, consider the case of Corollary \ref{cor:AM}: at a point $\b x$, there might be just one direction $\gb \xi$ for which ellipticity does not hold. Then one can form the conjecture that reconstruction can still be stable if there is no edge along this direction (see the related discussion in \cite[6(ii)]{KucSte12}). We plan to address this in future work.

\item The ellipticity conditions for $\mathcal A_\lambda$ and $\mathcal A_\rho$ seem to be natural. Concerning $\lambda$, literature actually often assumes the incompressibility condition $\nabla\cdot\b u=0$ on the whole of $\Omega$ \cite{RagYag94,AmmGarJou10,ManOliDreMahKru01,BarGok04}. In this case, of course, the measurement data are not dependent on $\lambda$, so this parameter cannot be reconstructed then. -- But in the compressible case, where $\nabla\cdot\b u\neq 0$ on the whole of $\Omega$, there still might be single points $\b x$ at which $\nabla\cdot\b u(\b x)=0$. Notice that, as stated in Remark~\ref{rem:PL}, the ellipticity analysis along the lines of this article then entails that at such points $\b x$, ellipticity is lost and every direction is a characteristic. -- Concerning the particular data one has, it might then be better to reconstruct the pressure $p=\lambda\nabla\cdot\b u$, with the operator $\mathcal A_p$ being always elliptic.

Similarly for $\rho$: If $\b u_{tt}=0$ on the whole of $\Omega$, the parameter $\rho$ does not appear in the model, so it cannot be reconstructed from the measurements. If, on the other hand, $\b u_{tt}(\b x)=0$ only for particular points $\b x$, the analysis says that ellipticity is lost at these points $\b x$, and every direction is a characteristic for $\mathcal A_\rho$ there.

\item The ellipticity condition for reconstruction of $\mu$ turned out to be the non-singular strain condition in \eqref{eq:NonSingStrainT}, which is a generalization of the condition in \eqref{eq:BalInvert}. Apart from this characterization, points of singular strain have been found in experiments, namely at the intersection of nodal lines or surfaces in early experiments of elastography using eigenmodes (see \cite{ParTayGraRub05,ParLer92,TayRubPar00}). Empirically, it was observed that these patterns could be avoided by choosing multi-frequency excitation functions $\b F$ \cite{ParTayGraRub05}.
\end{enumerate}

\section{Conclusion}
We have applied a general method of linear PDE to linearized problems in quantitative elastography in $\mathbb{R}^3$, with interior data given. We analyzed ellipticity conditions of the PDE problem augmented with the interior data. We deduced simple criteria for the stability of the linearization. This analysis revealed stable reconstruction of the shear modulus $\mu$ and the hydrostatic pressure $p=\lambda\nabla\cdot\b u$, but pointed to a difficulty of reconstruction of $\lambda$. For the reconstruction of $\mu$ and $\rho$, the kernel in the linearization was shown to be trivial for choice of two measurements. The results give a mathematical explanation which biomechanical parameters can be stably reconstructed from interior measurement data $\b u$.

\subsection*{Acknowledgements}
We thank Joyce McLaughlin, Dustin Steinhauer, Guillaume Bal, Josef Schicho, Jos\'e Iglesias Martinez and Kristoffer Hoffmann for helpful discussions and acknowledge support from the Austrian Science Fund (FWF) in project S10505-N20.

\appendix
\section{Appendix}
We give here the proof of two topological lemmas which we use in the determination of the kernel in Theorem~\ref{thm:KerMu}

\begin{lem}
\label{lem:top1}
Let $A\subset\Omega$ be open, and let $\b p\in A$. Let $V$ be the connected component of  $\b p$ in the topology of $A\subset\Omega$.  Then $V$ is open in $\Omega$.
\end{lem}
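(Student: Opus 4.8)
The statement is a standard point-set topology fact: in a locally connected space, connected components of open sets are open. Since $\Omega\subset\mathbb{R}^n$ (or any open subset of Euclidean space) is locally connected, this applies directly. I will give a self-contained argument so as not to invoke local connectedness as a black box.

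First I would fix an arbitrary point $\b x\in V$. Since $A$ is open in $\Omega$ and $\Omega$ is open in $\mathbb{R}^n$, the set $A$ is open in $\mathbb{R}^n$, so there exists $\varepsilon>0$ with $B(\b x,\varepsilon)\subset A$, where $B(\b x,\varepsilon)$ denotes the open Euclidean ball. The ball $B(\b x,\varepsilon)$ is connected (indeed convex), and it is contained in $A$; moreover $\b x\in B(\b x,\varepsilon)\cap V$, so $B(\b x,\varepsilon)\cup V$ is a union of two connected subsets of $A$ with nonempty intersection, hence connected, and it contains $\b p$. By maximality of the connected component $V$ (it is the union of all connected subsets of $A$ containing $\b p$), we conclude $B(\b x,\varepsilon)\cup V\subset V$, i.e.\ $B(\b x,\varepsilon)\subset V$. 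Since $\b x\in V$ was arbitrary, $V$ is open in $\mathbb{R}^n$, and therefore open in $\Omega$.

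There is essentially no obstacle here; the only thing to be careful about is the definition of $V$ used in the main text, namely $V=\bigcup\{U:\b p\in U\subset A,\ U\text{ connected in }A\}$. One should note that this $V$ is itself connected (a union of connected sets sharing the common point $\b p$), hence it genuinely is the connected component of $\b p$ in $A$, which is what makes the maximality argument in the previous paragraph legitimate. I would state this observation explicitly before running the openness argument. The final remark that an open connected subset of $\mathbb{R}^n$ is path-connected, used later in the excerpt, follows from the same local-connectedness idea (open balls are path-connected) but is not needed for this lemma itself.
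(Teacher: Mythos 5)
Your proof is correct and follows essentially the same route as the paper's: pick an arbitrary point of $V$, place a connected ball around it inside $A$, observe that the union of the ball with $V$ is a connected subset of $A$ containing $\b p$, and conclude by maximality of the component that the ball lies in $V$. The extra remark that the union-of-connected-sets definition of $V$ really is the connected component is a reasonable clarification but does not change the argument.
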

\begin{proof}
Let $\b x\in V\subset A$ be an arbitrary point in $V$. As $\b x\in A$ and $A$ is open, there exists an $\varepsilon > 0$ such that 
\[
	U_1:=\{ \b z \in\Omega: | \b x-\b z|<\varepsilon\}\subset A.
\]
Observe that the set $U_1$ is connected and $\b x\in V\cap U_1$. From \cite[Thm.23.3]{Mun00}, it then follows that $V\cup U_1\subset A$ is a connected set.

Among all subsets of $A$ which are connected and contain $\b p$, the component $V$ is maximal. Therefore $\b p\in V\cup U_1=V$, or equivalently $U_1\subset V$. This shows that $V$ is open in $\Omega$.
\end{proof}

\begin{lem}
\label{lem:top2}
Let $A\varsubsetneq\Omega\subset\mathbb{R}^n$ be open and bounded, and let $\b p\in A$. Let $V$ be the connected component of  $\b p$ in the topology of $A\subset\Omega$. Let $\partial V$ be the boundary of $V$ in the topology of $\Omega$. 
Then there exists a point 
\[
	\b q\in\partial V\setminus\ A.
\]
\end{lem}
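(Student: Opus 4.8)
The plan is to argue by contradiction. Assume that $\partial V\subseteq A$; I will show that this forces $V$ to be a nonempty subset of $\Omega$ which is both open and closed in $\Omega$, contradicting $A\varsubsetneq\Omega$ together with the connectedness of $\Omega$.

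First I would record two structural facts about $V$. By Lemma~\ref{lem:top1}, $V$ is open in $\Omega$. Secondly, $V$ is a connected component of the subspace $A$, and connected components are always closed in the space they sit in; hence $V$ is closed in $A$. Writing $\overline V$ for the closure of $V$ in $\Omega$, the boundary taken relative to $\Omega$ satisfies $\partial V=\overline V\setminus V$, so $\overline V=V\cup\partial V$.

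Now, under the contradiction hypothesis $\partial V\subseteq A$, and since $V\subseteq A$ as well, we obtain $\overline V\subseteq A$. The closure of a connected set is connected, so $\overline V$ is a connected subset of $A$ containing $\b p$. But $V$ is by definition the largest connected subset of $A$ containing $\b p$ (this is exactly the union defining $V$ in the proof of Theorem~\ref{thm:KerMu}), whence $\overline V\subseteq V$ and therefore $\overline V=V$. Thus $V$ is simultaneously open and closed in $\Omega$ and nonempty, and since $\Omega$ is connected this yields $V=\Omega$. This contradicts $V\subseteq A\varsubsetneq\Omega$. Consequently $\partial V\setminus A\neq\emptyset$, and any point of this set may be taken as $\b q$.

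The ingredients used — components are closed, the closure of a connected set is connected, and a nonempty clopen subset of a connected space is the whole space — are all elementary (see \cite{Mun00}), so I do not expect a genuine obstacle. The one point that must not be glossed over is that $\Omega$ is connected: without it the statement is false (take $\Omega$ the union of two disjoint balls and $A$ one of them), and one must likewise keep in mind throughout that $\partial V$ denotes the boundary of $V$ \emph{in $\Omega$}, not in $\mathbb{R}^n$.
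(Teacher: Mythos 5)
Your proof is correct, but it takes a genuinely different route from the paper's. The paper argues in two steps: it first produces a point $\b q\in\partial V\setminus V$ by minimizing the distance from some $\b y\in\Omega\setminus V$ over $\overline V$ and using the openness of $V$ to exclude interior minimizers, and then shows $\b q\notin A$ via a sequence argument together with the fact that components are closed in $A$. You instead run the classical clopen argument: if $\partial V\subseteq A$, then $\overline V$ (closure in $\Omega$) is a connected subset of $A$ containing $\b p$, so maximality of the component gives $\overline V=V$, making $V$ a nonempty clopen subset of $\Omega$ and contradicting $V\subseteq A\varsubsetneq\Omega$ once $\Omega$ is connected. Your version is shorter and purely topological (no metric, no compactness), at the price of explicitly requiring $\Omega$ connected --- and you are right to insist on this point: your two-disjoint-balls example shows the lemma is false as stated without that hypothesis, since $\partial V$ can then be empty. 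This is a defect of the lemma's statement rather than of your proof; in the application (Theorem~\ref{thm:KerMu}) $\Omega$ is a bounded connected domain, so your argument applies verbatim. It is worth noting that the paper's proof, which nominally avoids connectedness, cannot be fully watertight for exactly this reason: its claim that ``$\overline V\subset\Omega$ is closed and bounded, hence compact'' is problematic when $V$ accumulates at $\partial\Omega$ (the distance minimizer may then lie on $\partial\Omega$ rather than in $\Omega$), whereas your clopen argument sidesteps this entirely.
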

\begin{proof}
We use Lemma~\ref{lem:top1} and prove the statement in two steps: first, we find a point $\b q\in\partial V\setminus V$; second, we show that $\b q\not\in A$.\\

{\bf Claim 1:} There exists a point $\b q\in\partial V\setminus V$.\\
We have that $V\subset A\varsubsetneq \Omega$. Therefore, there exists an element 
\begin{equation}
\label{eq:lemDefz}
	\b y\in\Omega\setminus V.
\end{equation}
Consider the mapping
\[
	\begin{aligned}
		f: \;&\overline V\to \mathbb{R} \\
		&  \b v\mapsto |\b y-\b v|.
	\end{aligned}
\]
Observe that $\overline V\subset\Omega$ is closed and bounded, hence a compact set; observe also that $f$ is continuous. Therefore, a minimum exists, that is:
\begin{equation}
\label{eq:lemMin}
	\exists \;\b q\in \overline V: |\b y - \b q| = \min_{\b v\in\overline V} \{|\b y- \b v|\}.
\end{equation}
We now show that, actually, the point $\b q\in \overline V=V\cup\partial V$ is not contained in $V$. Once this is shown, Claim 1 is proven.

Assume, on the contrary, that $\b q\in V$. According to Lemma~\ref{lem:top1}, we then would have an $\varepsilon$, such that
\[
	U_2:= \{ \b z:| \b q-\b z|<\varepsilon\} \subset V.
\]
Without loss of generality, we can assume $\varepsilon<2$. Now, using the element $\b y$ from \eqref{eq:lemDefz}, define the point
\[
	\b w:= \b q + \frac{\varepsilon}{2}(\b y-\b q),\qquad \b w\in  U_2.
\]
Then calculate
\[
\begin{aligned}
	| \b y-\b w| &= | \b y-\b q - \frac{\varepsilon}{2}(\b y-\b q) | = | (\b y-\b q)(1-\frac{\varepsilon}{2})|
\\
& \leq | \b y-\b q| \underbrace{(1-\frac{\varepsilon}{2})}_{<\;1}<| \b y-\b q|.
\end{aligned}
\]
This would contradict \eqref{eq:lemMin}. -- Therefore, $\b q\not\in V$.\\

{\bf Claim 2:} The point $\b q$ in \eqref{eq:lemMin} does not belong to $A$.\\
We prove this claim indirectly. Assume that 
\begin{equation}
\label{eq:lemQinA}
	\b q\in A.
\end{equation}
Recall that, according to Claim 1, $\b q\in\partial V$, where $\partial V$ is the boundary of $V$ in~$\Omega$.
Hence there exists a sequence $\b v_n\in V$ with $\b v_n\to\b q$ in the topology of $\Omega$.

We assert that 
\begin{equation}
\label{eq:lemConvA}
		\b v_n\to \b q\qquad\text{in the topology of } A.	
\end{equation}
To see this, choose an open set $U_3\subset A$ with $\b q\in U_3$. Because $A$ is open in $\Omega$, $U_3$ is open in $\Omega$ as well. Now the elements $\b v_n$ converge to $\b q$ in $\Omega$; therefore, there exists an $N$, such that for all $n\geq N: \b v_n\in U_3$; hence we have \eqref{eq:lemConvA}.

The set $V$, which is the connected component of the point $\b p$, is closed in the topology of $A$ \cite[Thm.23.4]{Mun00}. But a closed set contains all its limit points. Therefore, with~\eqref{eq:lemConvA}, we would have that the limit of the sequence $\b v_n\in V$ lies in $V$, so $\b q\in V$. But this is a contradiction to Claim 1. -- Therefore, contrary to \eqref{eq:lemQinA}, we have $\b q\not\in A$.

\end{proof}

\small

\bibliographystyle{plain}

\def\cprime{$'$}
  \providecommand{\noopsort}[1]{}\def\ocirc#1{\ifmmode\setbox0=\hbox{$#1$}\dimen0=\ht0
  \advance\dimen0 by1pt\rlap{\hbox to\wd0{\hss\raise\dimen0
  \hbox{\hskip.2em$\scriptscriptstyle\circ$}\hss}}#1\else {\accent"17 #1}\fi}
  \def\cprime{$'$}

\end{document}